%
%
%
\documentclass[11pt]{amsart}

\RequirePackage{hyperref}
\hypersetup{pdfpagemode={UseOutlines},
bookmarksopen=true,
bookmarksopenlevel=0,
hypertexnames=false,
colorlinks=true, 
citecolor=teal, 
linkcolor=blue, 
urlcolor=magenta, 
pdfstartview={FitV},
unicode,
breaklinks=true,
}

\usepackage[margin=1.1in]{geometry}

\usepackage{epsfig,amscd,amssymb,amsmath,amsfonts}
\usepackage{amssymb}
\usepackage{tikz-cd}
\usepackage{epsfig}
\usepackage{verbatim}
\newtheorem{theorem}{Theorem}[section]
\newtheorem{theoremintro}{Theorem}[section]
\newtheorem{lemma}[theorem]{Lemma}
\newtheorem{proposition}{Proposition}[section]

\newtheorem*{theorem*}{Theorem}

\theoremstyle{definition}
\newtheorem{definition}[theorem]{Definition}
\newtheorem{corollary}[theorem]{Corollary}
\newtheorem{conjecture}[theoremintro]{Conjecture}

\newtheorem*{QIconjecture*}{QI Rigidity Conjecture}
\newtheorem*{PSconjecture*}{Pointed sphere Conjecture}

\theoremstyle{remark}
\newtheorem{remark}[theorem]{Remark}
\newtheorem{question}[theorem]{Question}

\numberwithin{equation}{section}

\usepackage{enumerate}


\usepackage{pgf,tikz}
\usetikzlibrary{arrows}
\usepackage{tikz-cd}

\usepackage{tikz-layers}
\usetikzlibrary{positioning}

\usetikzlibrary{patterns.meta}

\tikzstyle{stuff_fill}=[rectangle,fill=white,minimum size=1em]

\tikzstyle{stuff_fillc}=[circle,fill=white,minimum size=1em]

\tikzstyle{stuff_fillg}=[rectangle,fill=black!10,minimum size=1em]

\tikzstyle{stuff_nofill}=[rectangle,draw]

\usetikzlibrary{lindenmayersystems}
\pgfdeclarelindenmayersystem{cayley}{
  \rule{F -> F [ R [F] [+F] [-F] ]}
  \symbol{R}{
    \pgflsystemstep=0.5\pgflsystemstep
  } 
}

\def \R {\mathbf{R}}


\setcounter{tocdepth}{1}

\title[Groups with roughly similar left-invariant metrics]{Rough similarity of left-invariant Riemannian metrics on some Lie groups}

\author{Enrico Le Donne}

\address{Enrico Le Donne \\Department of Mathematics, University of Fribourg, Chemin du Musée~23, 1700 Fribourg, Switzerland \& 
University of Jyv\"askyl\"a, Department of Mathematics and Statistics, P.O. Box (MaD), FI-40014, Finland}
\email{enrico.ledonne@unifr.ch}

\author{Gabriel Pallier}
\address{Gabriel Pallier, Sorbonne Universit\'e, IMJ-PRG, 75252 Paris Cedex 05, France.}
\email{gabriel@pallier.org}

\author{Xiangdong Xie}
\address{Xiangdong Xie, Dept. of Mathematics  and   Statistics,   Bowling Green  State  University, 
  Bowling Green,  OH,   U.S.A.}
\email{xiex@bgsu.edu}

\date{\today}


\thanks{All three authors were partially supported by the European
Research Council (ERC Starting Grant 713998 GeoMeG `\emph{Geometry of
Metric Groups}'). Moreover,
the great part of this work was done while the three authors were
enjoying the stimulating environment of the University of Pisa.
E.L.D. was also partially supported by the Academy of Finland (grant
288501 `\emph{Geometry of subRiemannian groups}' and by grant 322898
`\emph{Sub-Riemannian Geometry via Metric-geometry and Lie-group
Theory}') and by the
Swiss National Science Foundation (grant 200021-204501
`\emph{Regularity of sub-Riemannian geodesics and applications}').
 X.X.   was partially  supported by
    Simons Foundation grant \#315130.  
}

\subjclass[2010]{%
20F69, 
22E25, 
53C23. 
}
\keywords{Heintze groups, %
solvable groups, quasiisometry, %
rough similarity.
}

\begin{document}

\maketitle


\begin{abstract}
We consider Lie groups that are either Heintze groups or  Sol-type groups, which generalize the three-dimensional Lie group SOL.
We prove that all  left-invariant Riemannian metrics on each such a Lie group are roughly similar via the identity.
This allows us to reformulate in a common framework former results by Le Donne-Xie, Eskin-Fisher-Whyte, Carrasco Piaggio, and recent results of Ferragut and Kleiner-M\"uller-Xie, on quasiisometries of these solvable groups. 
\end{abstract}

\tableofcontents


%

\section{Introduction}

\subsection{Main results}
In this paper, we compare  left-invariant Riemannian metrics on certain simply connected solvable Lie groups. The groups under study fall within two classes: 
\begin{itemize}
\item
Heintze groups, that is, simply connected solvable groups with Lie algebra $\mathfrak s$ such that $\mathfrak n = [\mathfrak s, \mathfrak s]$ has codimension $1$ in $\mathfrak{s}$ and $\mathfrak s$ splits as $\mathfrak n \rtimes \R$, where $\R$ acts on $\mathfrak n$ via a  derivation $D$   whose eigenvalues have positive real parts.
\item 
Sol-type groups, that is, simply connected solvable groups with Lie algebra $\mathfrak g$ such that $\mathfrak n = [\mathfrak g, \mathfrak g]$ has codimension $1$ in $\mathfrak{g}$ and $\mathfrak g$ splits as $\mathfrak n \rtimes  \R$, where $\R$ acts on $\mathfrak n$ via a  derivation $D$   whose eigenvalues have nonzero real parts, not all of the same sign, and such that $[\mathfrak n^{>0}, \mathfrak n^{<0}]=0$, where $\mathfrak{n}^{<0}$ (resp. $\mathfrak{n}^{>0}$) is the sum of eigenspaces with negative (resp. positive) real part.

\end{itemize}
Some relevant properties of these groups will be recalled in \S~\ref{sec:preliminary}; the main common feature of the groups we consider is to be simply connected, solvable, and have one-dimensional first cohomology, though the latter do not constitute a characterization.

In order to state our main result, recall that if $\phi \colon X \to Y$ is assumed to be a quasiisometry between  metric spaces $X$ and $Y$, then for some $c \geqslant 0$ there are positive constants $\lambda_-$ and $\lambda_+$ such that $\lambda_- d(x,x') - c \leqslant d(\phi(x), \phi(x'))$ and $\lambda_+ d(x,x') + c \geqslant d(\phi(x), \phi(x'))$ for every $x,x' \in X$. 
Say that the quasiisometry $\phi$ is a {\em rough similarity} if one can further take $\lambda_- = \lambda_+$ and a {\em rough isometry} if one can take $\lambda_- = \lambda_+ = 1$ in the inequalities above.

\begin{theoremintro}\label{main-heintze}
Let $S$ be a Heintze group and let $g_1$ and $g_2$ be  left-invariant Riemannian metrics on $S$  with distance function $d_1$ and $d_2$, respectively. Then the identity map
 $\text{Id}: (S, d_1)\rightarrow (S, d_2)$ is a rough similarity.
\end{theoremintro}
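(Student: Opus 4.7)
The plan is to establish an asymptotic distance formula
\begin{equation*}
d_g(e, q) \;=\; a_g \cdot \delta(q) + O(1), \qquad q \in S,
\end{equation*}
for every left-invariant Riemannian metric $g$ on $S$. Here $a_g := \|X_0\|_g$ for a fixed $X_0 \in \mathfrak{s}$ with $h(X_0)=1$, where $h \colon S \to \R$ is the height homomorphism (with kernel $N$, canonically defined from the group structure), and $\delta \colon S \to \R_{\geqslant 0}$ is a function depending only on the Lie-group structure of $S$ (plus the choices of $X_0$ and of an auxiliary left-invariant quasi-norm on $N$ adapted to the derivation $D$), but not on $g$. Granted such a formula, left-invariance of $d_{g_1}$ and $d_{g_2}$ reduces the theorem to the case $p=e$, and subtracting the two formulas yields $d_{g_2}(e,q) = \lambda \cdot d_{g_1}(e,q) + O(1)$ with $\lambda = a_{g_2}/a_{g_1}$, which is the desired rough similarity.

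The upper bound $d_g(e, q) \leqslant a_g \cdot \delta(q) + O(1)$ would follow from an explicit three-segment Ansatz: from $e$, go vertically along $t \mapsto \exp(tX_0)$ to some height $T$, then translate horocyclically in $N$ so that, upon descending, one arrives at $q$, and finally descend vertically to the height $h(q)$. The vertical segments together contribute approximately $\bigl(|T| + |T - h(q)|\bigr) \cdot a_g$. Because $\varphi_t := \exp(tD)$ contracts $\mathfrak n$ exponentially in the appropriate direction, the middle horocyclic segment becomes exponentially cheap as $T$ grows in that direction. Optimizing $T$ in terms of a left-invariant quasi-norm $\nu$ of the $N$-component of $q$ defines $\delta(q)$ and yields the bound.

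For the complementary lower bound, I would use that $(S, d_g)$ is always Gromov hyperbolic: by Heintze's theorem, $S$ admits a strictly negatively curved left-invariant Riemannian metric, and every other left-invariant Riemannian metric is bi-Lipschitz to it via the identity. Distances in a Gromov-hyperbolic space are recovered, up to bounded additive error, from the Busemann function at a boundary point together with a horocyclic contribution. The relevant boundary point is the distinguished attractor of $\{\exp(tX_0)\}_{t \in \R}$ on $\partial_\infty S$; the Busemann function based there coincides with $a_g \cdot h$ up to a bounded function, and the horocyclic contribution matches, up to $O(1)$, the $\log^+$ of the quasi-norm $\nu$ used above. This yields the complementary lower bound and hence the formula. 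The function $\delta$ is then seen to be intrinsic up to a bounded additive error: any two left-invariant quasi-norms on $N$ adapted to the $D$-grading are mutually bi-Lipschitz, so their logarithms agree up to $O(1)$.

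I expect the main technical hurdle to lie in the lower bound, namely the upgrade from the purely bi-Lipschitz control between distinct left-invariant Riemannian metrics to \emph{additive} control via the Busemann-function identification. This requires a quantitative use of the homogeneity of $S$ together with the rigidity of its horocyclic structure, with special care for Heintze groups whose derivation $D$ has complex eigenvalues or non-trivial Jordan blocks, in which case the parabolic geometry of $N$ involves logarithmic or oscillatory corrections that must not spoil the additive error.
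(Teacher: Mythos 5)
Your strategy---an asymptotic formula $d_g(e,q)=\lambda_g\,\delta(q)+O(1)$ with $\delta$ independent of $g$, the upper bound by an explicit vertical--horospherical path, the lower bound via hyperbolicity and the Busemann function at the focal point---is viable and in substance close to the paper's argument, but as written it contains a genuine error in the normalizing constant, and the error propagates to your final similarity ratio. The constant cannot be $a_g=\|X_0\|_g$ for a single fixed $X_0$ with $h(X_0)=1$: the correct constant is $b_g:=\min\{\|V\|_g:\ dh(V)=1\}$, the $g$-norm of the unit-height vector that is $g$-orthogonal to $\mathfrak n$ (equivalently $1/\|dh\|_{g^\ast}$), and $b_g<a_g$ whenever $X_0\not\perp_g\mathfrak n$. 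In particular your key identification ``the Busemann function at the focal point coincides with $a_g\cdot h$ up to a bounded function'' is false in that case; it equals $b_g\cdot h+O(1)$, since height can be gained at cost $b_g$ per unit while $h$ is $(1/b_g)$-Lipschitz. Concretely, take $S=\mathbb R\rtimes\mathbb R$ with $g_1=e^{-2t}dx^2+dt^2$ (hyperbolic plane, focal point at $t=+\infty$) and $X_0=\partial_x+\partial_t$, so $a_{g_1}=\sqrt2$. Then $\exp(sX_0)=(e^s-1,s)$, and since the horizontal segment at height $s$ gives $d_{g_1}\bigl((e^s-1,s),(0,s)\bigr)\le e^{-s}(e^s-1)\le 1$, we get $d_{g_1}(e,\exp(sX_0))=s+O(1)$, not $\sqrt2\,s+O(1)$: your three-segment Ansatz is only an upper bound, and here it overshoots by the factor $a_{g_1}/b_{g_1}=\sqrt2$. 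Comparing with a second metric $g_2$ for which $X_0\perp_{g_2}\mathfrak n$ and $\|X_0\|_{g_2}=1$ (so $d_{g_2}(e,\exp(sX_0))=s$ exactly) shows that no $g$-independent $\delta$ can satisfy both of your formulas, and that your ratio $a_{g_2}/a_{g_1}=1/\sqrt2$ is not the similarity ratio of the pair $(d_1,d_2)$, which is $b_{g_2}/b_{g_1}=1$.

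The repair is exactly the paper's Lemma~\ref{lem:perp-section}: the vertical direction must be chosen per metric as the one-parameter subgroup $g$-orthogonal to $\mathfrak n$, which is a $g$-geodesic; normalizing it to unit speed makes the height the Busemann parameter on the nose, and the rest of your plan then works (the merge height defined by a quasi-norm is independent of $g$ up to $O(1)$ because any two left-invariant metrics are uniformly bi-Lipschitz on horospheres and horospherical distances decay at a definite exponential rate; Jordan blocks and complex eigenvalues only produce bounded transient oscillations). Since the vertical families attached to $g_1$ and $g_2$ are now different, you additionally need that each left coset of one is at bounded Hausdorff distance from a unique left coset of the other; this is the paper's Lemma~\ref{1parameter}, and with these two corrections your argument becomes essentially the proof given in Section~\ref{sec:heintze}.
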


\begin{theoremintro}\label{main-soltype}
Let $G$ be  a  Sol-type  group and $g_1$ and $g_2$ be   left-invariant   Riemannian metrics on $G$   with distance function $d_1$ and $d_2$, respectively. Then the identity map
 $\text{Id}: (G, d_1)\rightarrow (G, d_2)$ is a rough similarity.
\end{theoremintro}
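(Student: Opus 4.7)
The plan is to reduce Theorem~\ref{main-soltype} to Theorem~\ref{main-heintze} via a pair of Heintze subgroups of $G$. Fix $X\in\mathfrak{g}$ projecting to a generator of $\mathfrak{g}/\mathfrak{n}$ and set $\mathfrak{s}^{+}=\mathfrak{n}^{>0}\rtimes\mathbb{R}X$ and $\mathfrak{s}^{-}=\mathfrak{n}^{<0}\rtimes\mathbb{R}X$. The sign condition on the eigenvalues of $D$ together with $[\mathfrak{n}^{>0},\mathfrak{n}^{<0}]=0$ guarantees that each of $\mathfrak{n}^{>0},\mathfrak{n}^{<0}$ is a Lie subalgebra and that both $\mathfrak{s}^{+},\mathfrak{s}^{-}$ are Heintze algebras (for $\mathfrak{s}^{-}$, after reversing the orientation on $\mathbb{R}X$). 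The corresponding closed subgroups $S^{+},S^{-}\subset G$ are therefore Heintze and share the one-parameter axis $A=\exp(\mathbb{R}X)$.

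First, I would apply Theorem~\ref{main-heintze} to $(S^{\pm},g_i|_{S^{\pm}})$, obtaining that $\mathrm{Id}\colon(S^{\pm},d_1^{S^{\pm}})\to(S^{\pm},d_2^{S^{\pm}})$ is a rough similarity with some constant $\lambda^{\pm}>0$, where $d_i^{S^{\pm}}$ denotes the intrinsic Riemannian distance. Restricting further to $A$---a one-parameter subgroup, hence isometric to a scaled Euclidean line in either intrinsic metric---$\mathrm{Id}|_A$ is an exact similarity with constant $\lambda:=(g_2(X,X)/g_1(X,X))^{1/2}$. Since any rough similarity extending an exact similarity of an unbounded Euclidean line must share its constant, one concludes $\lambda^{+}=\lambda^{-}=\lambda$.

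It remains to propagate the common rough similarity on $S^{+}\cup S^{-}$ to all of $G$. Using the coordinates $g=\exp(v^{+})\exp(v^{-})\exp(tX)$, which are well-defined because $\mathfrak{n}^{>0}$ and $\mathfrak{n}^{<0}$ commute, $G$ identifies coarsely with a horocyclic product of $S^{+}$ and $S^{-}$ over $A$. I would establish a coarse geodesic decomposition: every pair of points of $G$ is joined, up to bounded additive error in either $d_i$, by a path made of one geodesic segment in a left coset of $S^{+}$ concatenated with one in a left coset of $S^{-}$, with the intrinsic Heintze distances on these cosets agreeing with the ambient $d_i$ up to additive constants. Since each such segment scales by $\lambda$ from $d_1$ to $d_2$ by the first step, so does $d_i(p,q)$, giving the rough similarity on $G$ with constant $\lambda$.

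The main obstacle lies in this last step: producing a coarse geodesic decomposition with uniform additive control, and with intrinsic-versus-ambient distances matching on cosets of $S^{\pm}$. The prototype is SOL, where geodesics are classically known to decompose into hyperbolic arcs along the two $S^{\pm}$-foliations; the requisite generalization to arbitrary Sol-type groups should be accessible through the structural work of Carrasco Piaggio, Kleiner--M\"uller--Xie, and Ferragut cited in the abstract.
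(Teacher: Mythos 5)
Your overall strategy coincides with the paper's: split $G$ into the two Heintze halves $S_1=N_1\rtimes\mathbb R$ and $S_2=N_2\rtimes\mathbb R$, apply Theorem~\ref{main-heintze} to each (after matching the similarity constants along a common vertical direction), and then propagate to $G$ through a coarse description of $d(p,q)$ as the length of a concatenation of one segment in a left coset of $S_1$ and one in a left coset of $S_2$. The difficulty is that the step you yourself single out as ``the main obstacle'' and defer to the literature is precisely the technical heart of the theorem: it is Theorem~\ref{soldistancetheorem}, the estimate $|d(p,q)-\rho(p,q)|\le C$ with $\rho(p,q)=d^{(1)}(\pi_1 p,\pi_1 q)+d^{(2)}(\pi_2 p,\pi_2 q)-|h(p)-h(q)|$, whose proof occupies most of Section~\ref{sec:soltype} (the Lipschitz projections of Lemma~\ref{projection}, the horoball-avoidance Lemmas~\ref{comparison-horosphere} and~\ref{path-outside}, and the case analysis of Lemma~\ref{claim} bounding how far a geodesic overshoots the heights $t_{x_1,y_1}$ and $t_{x_2,y_2}$). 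This cannot be quoted from the sources you invoke: Carrasco Piaggio and Kleiner--M\"uller--Xie concern boundary rigidity of Heintze groups and say nothing about distance estimates in Sol-type groups, while Ferragut's horospherical-product estimate assumes the two factors are perpendicular, which fails for a general left-invariant metric on $G$ (the paper makes exactly this point in the remark following Corollary~\ref{solpath}). As written, your argument therefore assumes, rather than proves, the hardest ingredient.

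Two further points. First, you work in a single coordinate system $\exp(v^{+})\exp(v^{-})\exp(tX)$ with one fixed $X$; but if $X$ is $g_1$-perpendicular to $\mathfrak n$ and not $g_2$-perpendicular, a coarse geodesic decomposition for $d_2$ along cosets of these particular subgroups is not what the distance estimate naturally provides for $g_2$, and one needs to compare the two families of vertical cosets --- this is the role of Lemma~\ref{Sol-1parameter} and Corollary~\ref{solpath} in the paper's treatment of the general case, which your sketch omits. Second, the restriction of the identity to $A=\exp(\mathbb R X)$ is an exact similarity of ratio $(g_2(X,X)/g_1(X,X))^{1/2}$ only when $X$ is perpendicular to $\mathfrak n$ for both metrics; for a general $X$ the one-parameter subgroup is not a geodesic, and the asymptotic ratio of the two distances along $A$ is governed by the normalizations of the perpendicular geodesic sections rather than by the norms of $X$. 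This does not invalidate the (correct) conclusion $\lambda^{+}=\lambda^{-}$, but the identification of the constant is wrong as stated, and it signals that the interplay between the two different vertical directions needs genuine care rather than a change of coordinates.
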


{\color{black}

Theorems~\ref{main-heintze} and \ref{main-soltype} imply the folowing statement at no cost: if $\varphi$ is an automorphism of a Heintze or Sol-type group, then $\varphi$ is a rough similarity with respect to any left-invariant Riemannian metric.
(It is an elementary fact that the {\em inner} automorphisms of any group $G$ equipped with a left-invariant distance $d$ are rough isometries; however, group automorphisms are in general no more than quasiisometries assuming in addition that $G$ is compactly generated and $d$ is proper geodesic.)
} 

Using Theorems~\ref{main-heintze} and \ref{main-soltype} we are able to reformulate certain results that appeared separately in the litterature. 
In the statement below, a Heintze group is of {\em special type} if it is a closed co-compact subgroup of a rank-one simple Lie group; {\em Carnot type} is a subclass of Heintze groups in which the nilradical is a Carnot group, and the derivation $D$ is a Carnot derivation of this group. 
For the background on Carnot groups, see for example \cite{ledonne_primer}.
The real shadow construction will be recalled along with precise definitions in \S~\ref{subsec:heintze-definition}.

The substantial part of the following theorem is provided by the given references, while its formulation depends on the results above.

\begin{theoremintro}
\label{th:reformulation}
Let $G$ belong to the following list:
\begin{enumerate}
\item
The Lie group $\mathrm{SOL}$ \cite{EFW2}.
\item
\label{ledonne-xie}
Heintze group whose real shadow is of Carnot type with reducible first stratum \cite{LeDonneXie}.
\item 
\label{carrascopiaggio}
Heintze group whose real shadow is not of Carnot type \cite{CarrascoOrliczHeintze}.
\item 
\label{kleiner-mueller-xie}
\begin{enumerate}
\item \label{KMX21}
Heintze group whose real shadow is of Carnot type, which is different from the special-type subgroups in $\mathrm{SO}(n,1)$ or $\mathrm{SU}(n,1)$, and whose nilradical is nonrigid in the sense of Ottazzi-Warhurst \cite{KMX21}.
\item \label{KMX22}
The Carnot-type Heintze group over the subgroup of unipotent triangular real $n \times n$ matrices, $n\geqslant 4$ \cite{LDX22f}.
\end{enumerate}
\item 
Non-unimodular Sol-type group \cite{FerragutThesis}.
\end{enumerate}
Equip $G$ with any left-invariant Riemannian metric with associated distance $d$. 
If $\phi \colon G \to G$ is a quasiisometry, then $\phi$ is a rough isometry with respect to $d$.
\end{theoremintro}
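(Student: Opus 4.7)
The plan is to combine Theorems~\ref{main-heintze} and \ref{main-soltype} with the cited quasi-isometric rigidity results. For each of the five classes on the list, the first step would be to extract from the cited reference the following statement: there is a left-invariant Riemannian metric $d_0$ on $G$ for which every self-quasi-isometry of $(G, d_0)$ is at bounded $d_0$-distance from an isometry of $(G, d_0)$, and therefore is a rough isometry of $(G, d_0)$. This is the standard formulation of quasi-isometric rigidity in Eskin--Fisher--Whyte's work on SOL and in the relevant parts of Ferragut's thesis on non-unimodular Sol-type groups; in the Heintze cases it is extracted from the cited references through boundary quasi-symmetric rigidity combined with the identification of the corresponding group of interior isometries.

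Given then an arbitrary left-invariant Riemannian metric $d$ on $G$ and a quasi-isometry $\phi \colon (G, d) \to (G, d)$, the second step is a metric transfer from $d_0$ to $d$ using Theorem~\ref{main-heintze} or Theorem~\ref{main-soltype}: the identity map $\text{Id} \colon (G, d) \to (G, d_0)$ is a rough similarity with some factor $\lambda > 0$ and additive constant $c'$. In particular, $\phi$ is also a quasi-isometry of $(G, d_0)$, and by the first step there is $c \geqslant 0$ with $|d_0(\phi(x), \phi(y)) - d_0(x,y)| \leqslant c$ for all $x, y \in G$. Applying the rough similarity inequality $|d_0(u,v) - \lambda d(u,v)| \leqslant c'$ at both $(u,v) = (x,y)$ and $(u,v) = (\phi(x), \phi(y))$ and combining with the preceding inequality yields $|d(\phi(x), \phi(y)) - d(x,y)| \leqslant (c + 2c')/\lambda$, which is the desired rough isometry property for~$d$.

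The main potential obstacle is not in the second step, which is essentially routine, but in verifying that each cited reference can indeed be read as providing a ``bounded distance from an isometry'' statement for some specific left-invariant Riemannian metric. Several of the cited works --- Carrasco's for non-Carnot Heintze groups, the Kleiner--M\"uller--Xie results in item~(4) of the list, and Ferragut's thesis for non-unimodular Sol-type groups --- are phrased at the boundary at infinity or in terms of horospheric structures rather than directly for a Riemannian metric in the interior. Converting each of these to a uniform interior formulation will require some bookkeeping but no new geometric input beyond what is already in the cited works and in Theorems~\ref{main-heintze} and \ref{main-soltype}.
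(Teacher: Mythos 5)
Your second step (transferring a rough isometry of a preferred metric $d_0$ to an arbitrary left-invariant metric $d$ via Theorems~\ref{main-heintze} and~\ref{main-soltype}) is exactly what the paper does, and the computation is fine. The gap is in your first step. You assert that each cited reference can be read as saying that every self-quasiisometry of $(G,d_0)$ is at bounded distance from an \emph{isometry} of $(G,d_0)$. That statement is not what the references provide, and it is in general false for these groups: the paper stresses, right after the statement of Theorem~\ref{th:reformulation}, that its conclusion is strictly weaker than ``bounded distance from an isometry'', and that these groups do admit rough isometries which are not at finite distance from any isometry. For instance, in $\mathrm{SOL}$ the structure result of \cite{EFW2} only says a self-quasiisometry is at bounded distance from a height-respecting product map $(x,y,t)\mapsto(f(x),g(y),t)$ with $f,g$ bilipschitz; such maps are typically not close to isometries. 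Similarly, \cite{LeDonneXie}, \cite{CarrascoOrliczHeintze}, \cite{KMX21}, \cite{LDX22f} and \cite{FerragutThesis} deliver bilipschitz regularity of boundary quasisymmetric maps (or of the factors of a product map), not interior isometries. So the input you would feed into your transfer argument does not exist, and the missing content is precisely how one converts ``bilipschitz on the boundary / bilipschitz factors'' into ``rough isometry of $(G,d_0)$'': in the Heintze cases this uses the equivalence between rough isometries of a Gromov-hyperbolic space and bilipschitz boundary maps (Bonk--Schramm, or \cite[Lemma~5.1]{SX} for parabolic visual metrics), and in the Sol-type cases it uses the distance formula of Theorem~\ref{soldistancetheorem}, which is a substantial result proved in this paper rather than bookkeeping.

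A second, independent omission concerns case~\eqref{ledonne-xie} (and the same issue is implicit in case~\eqref{kleiner-mueller-xie}): before one can apply the boundary rigidity of \cite{LeDonneXie} one must know that the boundary extension of the quasiisometry fixes the focal point, i.e.\ a case of the Pointed Sphere Conjecture. The paper has to prove this (Proposition~\ref{prop:pointed-sphere-red-first-stratum}), using the reducible first stratum hypothesis, Pansu differentiability, and the preservation of an invariant coset foliation; your proposal does not address this step at all, and without it the quasisymmetric map of $\partial S$ need not restrict to a self-map of the Carnot group $N$ to which the cited bilipschitz theorems apply.
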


 
Note that, in general, the notion of a rough isometry of a group does not make sense because it depends on the left-invariant distance one choses on the group. {\color{black} In view of Theorems~\ref{main-heintze} and \ref{main-soltype}, the conclusion of Theorem \ref{th:reformulation} may also be stated in the following way: given any pair of left-invariant  Riemannian distances $d_1$ and $d_2$, every quasiisometry $(G,d_1) \to (G,d_2)$ is a rough similarity, whose similarity constant only depends on the pair $(d_1,d_2)$.}

We point out that the rigidity property of quasiisometries expressed in Theorem \ref{th:reformulation} is weaker than the  rigidity of quasiisometries (which means every self quasiisometry of a certain metric space  is at a finite distance from an isometry).  Every map at a finite distance from a isometry is a rough isometry.  However, depending on the space there may exist rough isometries that are not at finite distance from any isometry, and this does actually happen for the left-invariant metrics on certain Heintze and Sol-type groups. 

We also note the following:

\begin{itemize}
\item 
Carrasco Piaggio has stated the conclusion in an equivalent form when $G$ is as in \eqref{carrascopiaggio} and additionally purely real \cite{CarrascoOrliczHeintze}. His result subsumes former ones, the first of which being by Xie and Shanmugalingam \cite{SX}, the second one by Xie in \cite{XieLargeScale}. 
\item 
Case \eqref{ledonne-xie} subsumes former work by Xie in \cite{XieReducible}. {\color{black} The groups of class (C) defined in \cite[14.1]{PansuCCqi} fall within this family (See Remark \ref{rem:C-is-reducible}), and the early \cite[Theorem 4]{PansuCCqi} implies Theorem~\ref{th:reformulation} for these: their quasiisometries are actually a bounded distance away from inner automorphisms.}
\item 
{\color{black}
Cases \eqref{ledonne-xie} and \eqref{KMX21} overlap, though none of them imply the other. The groups considered in \cite[\S 14.3]{PansuCCqi} belong to both classes. 
Case \eqref{KMX22} is not implied by \eqref{ledonne-xie} nor by \eqref{KMX21}.}
\item The statements in the references given are not uniform, so the degree of reformulation varies.
\end{itemize}


Bringing them together, the cases \eqref{ledonne-xie}, \eqref{carrascopiaggio} and \eqref{kleiner-mueller-xie} of Theorem~\ref{th:reformulation} support the following conjecture:

\begin{conjecture}
\label{main-conj}
Let $S$ be a Heintze group, {\color{black} which is not among the special-type subgroups of} $\operatorname{SO}(n,1)$ or $\operatorname{SU}(n,1)$ for any $n \geqslant 2$. 
Equip $S$ with any left-invariant Riemannian metric. Then every self-quasiisometry of $S$ is a rough isometry.
\end{conjecture}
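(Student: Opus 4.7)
The plan is to leverage Theorem \ref{main-heintze} together with the classical theory of quasi-symmetric maps on the visual boundary at infinity. Because a Heintze group $S$ is Gromov hyperbolic, any self-quasiisometry $\phi$ of $S$ induces a quasi-symmetric self-homeomorphism $\partial\phi$ of $\partial_\infty S$. By Theorem \ref{main-heintze}, $\phi$ is a rough similarity with some constant $\lambda_\phi$, and this constant coincides with the dilation factor of $\partial\phi$ once $\partial_\infty S$ is endowed with a parabolic visual metric based at the fixed point of the hyperbolic one-parameter subgroup. The conjecture therefore reduces to showing that the multiplicative subgroup $\Lambda(S) \subseteq \R$ of achievable rough-similarity constants for self-quasiisometries of $S$ is trivial whenever $S$ is not a special-type closed cocompact subgroup of $\operatorname{SO}(n,1)$ or $\operatorname{SU}(n,1)$.

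I would then combine the rigidity results listed in Theorem \ref{th:reformulation} with a uniform no-dilation argument at the boundary. In each treated case, the existing proof extracts a canonical structure on $\partial_\infty S$ minus a point---a Carnot-Carath\'eodory distribution, a stable/unstable splitting with associated Orlicz gauge, or a preferred horizontal foliation---and shows that $\partial\phi$ preserves this structure up to biLipschitz equivalence. Once such a structure is identified, the rough-similarity ratio is forced to equal $1$, because the only dilations preserving it are those that already extend to isometries of $S$. For the two exceptional rank-one symmetric groups, by contrast, the boundary is a round sphere (respectively a standard CR sphere) whose conformal group admits dilations of arbitrary ratio, so no such rigidity can hold, which justifies excluding them from the conjecture.

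The main obstacle is to exhibit this canonical structure uniformly for every Heintze group outside the two exceptional families. A natural line of attack is through the real-shadow construction recalled in \S~\ref{subsec:heintze-definition}: if one can show that the real shadow of $S$ is rigid in the sense that every self-quasiisometry of it is a rough isometry, then Theorem \ref{main-heintze} combined with the functoriality of the shadow propagates the rigidity back to $S$. This reduces matters to the purely real Carnot-type case, where a Pansu-type differential provides a graded Lie algebra automorphism of the nilradical whose homogeneity degree determines $\lambda_\phi$. The hardest remaining subcase is to rule out $\lambda_\phi \neq 1$ when the nilradical is non-rigid in the Ottazzi-Warhurst sense yet not one of the Heisenberg groups associated with $\operatorname{SU}(n,1)$, for which existing techniques yield only partial results such as case \eqref{KMX22}; completing this step likely requires a general quasi-conformal modulus estimate on Carnot boundaries that forces the conformal dimension to be attained.
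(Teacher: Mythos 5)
You are attempting to prove Conjecture~\ref{main-conj}, which the paper itself does not prove: it only establishes special cases (Theorem~\ref{th:reformulation}) and records a \emph{conditional} scheme of proof, assuming Cornulier's Pointed Sphere Conjecture and \cite[Conjecture 1.13]{LDX20p}. Your proposal is likewise not a proof, and its very first step contains a genuine error: Theorem~\ref{main-heintze} asserts that the \emph{identity map} between any two left-invariant Riemannian metrics on $S$ is a rough similarity (and, as remarked after Theorem~\ref{th:reformulation}, this extends to automorphisms); it says nothing about an arbitrary self-quasiisometry $\phi$. For a general quasiisometry of a Gromov-hyperbolic space there is no reason the multiplicative constants $\lambda_-$ and $\lambda_+$ should coincide, and showing that they do (with common value $1$) is exactly the content of the conjecture; so your reduction to proving that the group $\Lambda(S)$ of achievable similarity ratios is trivial assumes what is to be proved. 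Moreover, identifying a similarity constant of $\phi$ with a dilation factor of $\partial\phi$ for a parabolic visual metric based at the focal point already presupposes that $\partial\phi$ fixes the focal point; that is precisely the Pointed Sphere Conjecture, open in general and proved in the paper only under the reducible-first-stratum hypothesis (Proposition~\ref{prop:pointed-sphere-red-first-stratum}).

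The remainder of your plan leaves the hard steps explicitly open (a canonical boundary structure for all non-exceptional Heintze groups, the non-rigid nilradicals beyond case \eqref{KMX22}, the conformal-dimension estimate), and in doing so it essentially reproduces, in weaker form, the paper's conditional argument: once the boundary map is known to fix the focal point, one invokes \cite[Conjecture 1.13]{LDX20p} (global quasiconformal homeomorphisms of Carnot groups other than abelian and Heisenberg groups are bilipschitz) together with \cite{SX} to get a rough isometry for one metric, then Theorem~\ref{main-heintze} to pass to any left-invariant metric. Note also that the special-type groups sitting in $\operatorname{Sp}(n,1)$ and $F_4^{(-20)}$ are \emph{not} excluded by the conjecture and are not covered by Theorem~\ref{th:reformulation}; the paper handles them via Pansu's rigidity \cite{PansuCCqi}, a case your sketch does not address. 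In summary: the concrete gaps are the misapplication of Theorem~\ref{main-heintze} to a general quasiisometry, the implicit and unproved use of the Pointed Sphere Conjecture, the unproved bilipschitz rigidity of quasiconformal maps on general Carnot boundaries, and the missing quaternionic/octonionic special-type case.
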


We will discuss further the relations and differences of Theorem \ref{th:reformulation} and Conjecture~\ref{main-conj} with quasi-isometric rigidity in the case of Heintze groups in \S \ref{subsec:comparison}. Especially, we will see there that Conjecture~\ref{main-conj} would follow from conjectures already explicitely stated in \cite{KMX21} and \cite{Cornulier:qihlc}. Keeping in mind that every homogeneous space of negative curvature is a Heintze group with a left-invariant metric, Conjecture~\ref{main-conj} can be considered as a precise version of the feeling expressed in the four lines before \S 1 in \cite{PansuCCqi}.

\subsection{Some context}

\subsubsection{Spaces of left-invariant metrics and comments on Theorems \ref{main-heintze} and \ref{main-soltype}}
The space of left-invariant Riemannian metrics on a given Lie group has been widely studied by differential geometers; let us rather restrict our discussion to the results that put an emphasis on large-scale geometry rather than on Lie groups, for we believe that this comparison is more instructive.

For a finitely generated group $\Gamma$, Gromov introduced a metric space denoted by $WM_\Gamma$ whose points are word metrics and the distance is measured by the logarithm of $\lambda$, where $(1/\lambda,\lambda)$ is the optimal pair of multiplicative quasiisometry constant between them \cite{AsInv}. 
The definition of this space itself is not straightforward, as one may consider several variants, especially one could compare metrics only through the identity map (as we do here), or through automorphisms, or even through arbitrary maps\footnote{One should also decide if roughly isometric or roughly similar metrics are to be identified; however this is not a deep distinction.}. One may also include metrics that are not word metrics, especially geometric metrics, induced by the Riemannian metrics on universal covers when $\Gamma$ is the fundamental group of a compact manifold.
The resulting space is in some sort reminiscent of Teichm\"uller space, and actually contains it when $\Gamma$ is a surface group.



Recently one of the variants of this space of left-invariant metrics was studied by Oreg\'on-Reyes in the case of word hyperbolic groups \cite[Theorem 1.3]{OregonReyes}. Oreg\'on-Reyes notes the analogy with Teichm\"uller spaces and identifies metrics that are roughly similar through the identity. 
\begin{theorem*}[Oreg\'on-Reyes]
Let $\Gamma$ be a word-hyperbolic group. Consider the space $\mathcal{D}(\Gamma)$ of left-invariant metrics on $\Gamma$ that are quasiisometric to word metrics, modded out by the equivalence relation $d \sim d'$ if $d$ and $d'$ are roughly similar through the identity.
Equip $\mathcal D(\Gamma)$ with the metric 
\[ \rho(d,d') := \inf \{ \log \lambda : \exists \sigma >0, \exists c \geqslant 0,\, \frac{\sigma}{\lambda} d - c \leqslant d' \leqslant \sigma \lambda d + c \}, \qquad \forall  d , d'  \in \mathcal D(\Gamma)  . \]
Then $\mathcal D(\Gamma)$ is unbounded.
\end{theorem*}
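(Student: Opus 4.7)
The plan is to exhibit, in a non-elementary word-hyperbolic group $\Gamma$, a sequence of word metrics $d_n$ on $\Gamma$ whose $\rho$-distance to some base metric $d_0$ tends to infinity. The invariant I would track is the stable translation length $\ell_d(g) := \lim_{k\to\infty} d(1,g^k)/k$, defined for every $g \in \Gamma$ of infinite order and every $d \in \mathcal{D}(\Gamma)$ via subadditivity. From the defining inequalities $\tfrac{\sigma}{\lambda} d - c \leq d' \leq \sigma \lambda d + c$, dividing by $k$ and letting $k \to \infty$ gives $\tfrac{\sigma}{\lambda}\ell_d(g) \leq \ell_{d'}(g) \leq \sigma\lambda \ell_d(g)$; in the ratio $r_d(g,h) := \ell_d(g)/\ell_d(h)$ the scale factor $\sigma$ cancels, yielding
\[ \bigl|\log r_{d'}(g,h) - \log r_d(g,h)\bigr| \leq 2\,\rho(d,d'). \]
It therefore suffices to produce elements $g,h\in\Gamma$ and a sequence $d_n\in\mathcal{D}(\Gamma)$ along which $r_{d_n}(g,h)$ is unbounded.

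For the construction I would pick two hyperbolic elements $g,h\in\Gamma$ with disjoint attracting and repelling fixed points in the Gromov boundary $\partial\Gamma$: such pairs exist in any non-elementary hyperbolic group by a standard ping-pong argument. Take a finite generating set $S \ni g,h$, write $d_0$ for the corresponding word metric, and let $d_n$ be the word metric on $\Gamma$ associated to the enlarged set $S_n = S \cup \{h^n, h^{-n}\}$. The upper bound $\ell_{d_n}(h) \leq 1/n$ is immediate from $d_n(1, h^n) \leq 1$.

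The main obstacle, which I expect to be the real work of the argument, is the matching lower bound $\ell_{d_n}(g)\geq c > 0$ uniform in $n$: one must show that the $h^n$-shortcuts introduced in $\mathrm{Cay}(\Gamma, S_n)$ do not collapse the translation length of $g$. The intuition, which I would make precise using $\delta$-hyperbolicity of $\mathrm{Cay}(\Gamma, S)$ and the North--South dynamics of $g$ and $h$ on $\partial\Gamma$, is that a $d_n$-geodesic from $1$ to $g^k$ can only shorten by fellow-traveling translates of the axis $A_h$ of $h$; since $A_g$ and $A_h$ share no endpoints in $\partial\Gamma$, Morse-type fellow-traveling windows between $A_g$ and any $\Gamma$-translate of $A_h$ are bounded, so only boundedly many $h^n$-edges can contribute nontrivially per period of $g$. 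A $\delta$-thin quadrilateral argument, combined with expanding each $h^n$-edge back to its $S$-word and projecting to $A_g$, should then yield a uniform linear estimate $d_n(1, g^k) \geq ck - C$. Granting this, $r_{d_n}(g,h) \geq c n \to \infty$, hence $\rho(d_n, d_0) \to \infty$, and $\mathcal{D}(\Gamma)$ is unbounded.
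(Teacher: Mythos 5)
First, a framing remark: the paper does not prove this statement at all --- it is quoted from Oreg\'on-Reyes \cite{OregonReyes} --- so there is no internal proof to compare with; your proposal has to stand on its own. Its skeleton is sound and is essentially the standard route: stable translation length is well defined by subadditivity, your reduction $\lvert \log r_{d'}(g,h)-\log r_d(g,h)\rvert \le 2\rho(d,d')$ is correct, word metrics for $S_n=S\cup\{h^{\pm n}\}$ lie in $\mathcal D(\Gamma)$, and $\ell_{d_n}(h)\le 1/n$ is immediate. (Two small points: the statement as recorded needs $\Gamma$ non-elementary, which you silently assume and which is the actual hypothesis in the cited result; and $\ell_{d_n}(h)>0$ since $d_n$ is a word metric, so the ratio is defined.)

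The genuine gap is exactly at the step you flag, and it is not merely technical: with $g,h$ chosen only to have disjoint attracting/repelling fixed-point pairs, the uniform lower bound $\ell_{d_n}(g)\ge c$ is false in general, and so is the geometric claim your sketch rests on. Concretely, take $g=\gamma h\gamma^{-1}$ for a generic $\gamma$: the fixed-point pairs of $g$ and $h$ are disjoint, yet stable translation length is a class function, so $\ell_d(g)=\ell_d(h)$ for \emph{every} left-invariant $d$, the ratio $r_{d_n}(g,h)$ is identically $1$, and indeed $d_n(1,g^{nk})\le k+2\lvert\gamma\rvert_S$ gives $\ell_{d_n}(g)\le 1/n$. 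Correspondingly, ``Morse windows between $A_g$ and any $\Gamma$-translate of $A_h$ are bounded'' does not follow from $A_g$ and $A_h$ sharing no endpoints: a translate $\gamma A_h$ can be asymptotic to $A_g$ even when $A_h$ is not. What you need is the stronger choice that no conjugate of a power of $h$ is a power of $g$ (equivalently, no translate of $A_h$ shares an endpoint with $A_g$); such pairs exist in every non-elementary hyperbolic group, but their existence requires its own short argument, and only under that hypothesis does properness/cocompactness yield a uniform bound $D$ on $\operatorname{diam}\pi_{A_g}(\gamma A_h)$ over all $\gamma\in\Gamma$ (translate a long fellow-traveling window back to a fixed ball using powers of $g$ and $h$, and use local finiteness to force a shared endpoint). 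With that repair your bookkeeping --- each $S$-letter moves the projection to $A_g$ by a bounded amount, each $h^{\pm n}$-letter by at most $D$ plus a constant, while the total displacement is at least $k\,\ell_{d_0}(g)-C$ --- does give $d_n(1,g^k)\ge ck-C'$ uniformly in $n$ and the proof closes; as written, however, the key step would fail.
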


All the Heintze groups being Gromov-hyperbolic, Oreg\'on-Reyes result is in sharp constrast with ours, which suggests that Theorems~\ref{main-heintze} and~\ref{main-soltype} may be special to non-finitely generated groups. Whether they are special to connected Lie group is currently unknown to us and we ask specific questions in this direction at the end of this paper.

{\color{black}

\subsubsection{Differences with other forms of rigidity}\label{subsec:comparison}
Some of the papers cited in Theorem~\ref{th:reformulation} were dedicated to proving quasiisometric rigidity, and they are known for this, so that it may be useful to point out the differences of the conclusion of Theorem~\ref{th:reformulation} with quasiisometric rigidity itself.
Namely, the following is expected:
\begin{QIconjecture*}
Let $\Gamma$ be a finitely generated group.
\begin{enumerate}
\item 
If $\Gamma$ is quasiisometric to a Heintze group $S$, then $S$ is of special type, and $\Gamma$ is virtually a lattice in the rank-one simple Lie group containing $S$ as a co-compact closed subgroup.
\item 
If $\Gamma$ is quasiisometric to a Sol-type group $G$, then $G$ is unimodular, and $\Gamma$ is virtually a lattice in a Lie group $\widehat G$ containing $G$ as a co-compact closed subgroup.
\end{enumerate}
\end{QIconjecture*}

\begin{figure}
    \begin{center}

\begin{tikzpicture}[line cap=round,line join=round,>=angle 45,x=0.75cm,y=0.5cm]
\clip(-3,-3.9) rectangle (18,12);

\fill [color=black, draw, dash pattern = on 4pt off 2pt,  fill opacity=0.1] (-2,0) rectangle (12,8);

\fill [color=black, draw, dash pattern = on 4pt off 2pt, fill opacity=0.1] (5,-1) rectangle (17,11);

\draw (-2,0) node[stuff_fillg, anchor=north west] {Conjecture~\ref{main-conj}};
\draw (0,-1.5) node[anchor=center] {$\Downarrow$};
\draw (-2,-2.5) node[draw, anchor=north west] {Theorem~\ref{th:reformulation} \eqref{ledonne-xie}, \eqref{carrascopiaggio}, \eqref{kleiner-mueller-xie}};

\draw (-2,8) node[anchor=north west] {Global quasiconformal};
\draw (-2,7.2) node[anchor=north west] {homeomorphisms of};
\draw (-2,6.4) node[anchor=north west] {Carnot groups other than };
\draw (-2,5.6) node[anchor=north west] {abelian and Heisenberg};
\draw (-2,4.8) node[anchor=north west] { groups are bilipschitz};
\draw (-2,4) node[anchor=north west] {\cite[Conjecture 1.13]{LDX20p}};

\draw (17,-1) node[stuff_fillg, anchor=north east] {QI rigidity for all Heintze groups};
\draw (8.5,2.3) node[anchor=center] {Pointed sphere};
\draw (8.5,1.5) node[anchor=center] {conjecture \cite[19.104]{Cornulier:qihlc}};

\draw (8.5,7) node[anchor=center] {Rigidity of QIs};
\draw (8.5,6) node[anchor=center] {for $\mathbb H^{n}_{\mathbb H}$ and $\mathbb H^{2}_{\mathbb O}$ \cite{PansuCCqi}};

\draw (5.2,4.2) rectangle (16.8,10.8);

\draw (5.2,10.8) node[anchor=north west] {QI rigidity of $\mathbb H^{n \geqslant 3}_{\mathbb R}$ \cite{TukiaQCM}};

\draw (5.2,9.4) node[anchor=north west] {QI rigidity of $\mathbb H^{n}_{\mathbb C}$ \cite{ChowQIch}};

\draw (16.8,9) node[anchor=north east] {QI rigidity of $\mathbb H^{2}_{\mathbb R}$};

\draw (16.8,8) node[anchor=north east] {\cite{TukiaConvGroups} +};
\draw (16.8,7) node[anchor=north east] {\cite{GabaiConFuchsAnnals} or \cite{CassonJungreis}};


\draw (16.7,4.3)  node[anchor=south east] {\em special type};

\end{tikzpicture}
\end{center}
    \caption{Relation to QI rigidity. In this conjectural picture, the parts that are proved are framed within continuous lines.}
    \label{fig:conj-pic}
\end{figure}
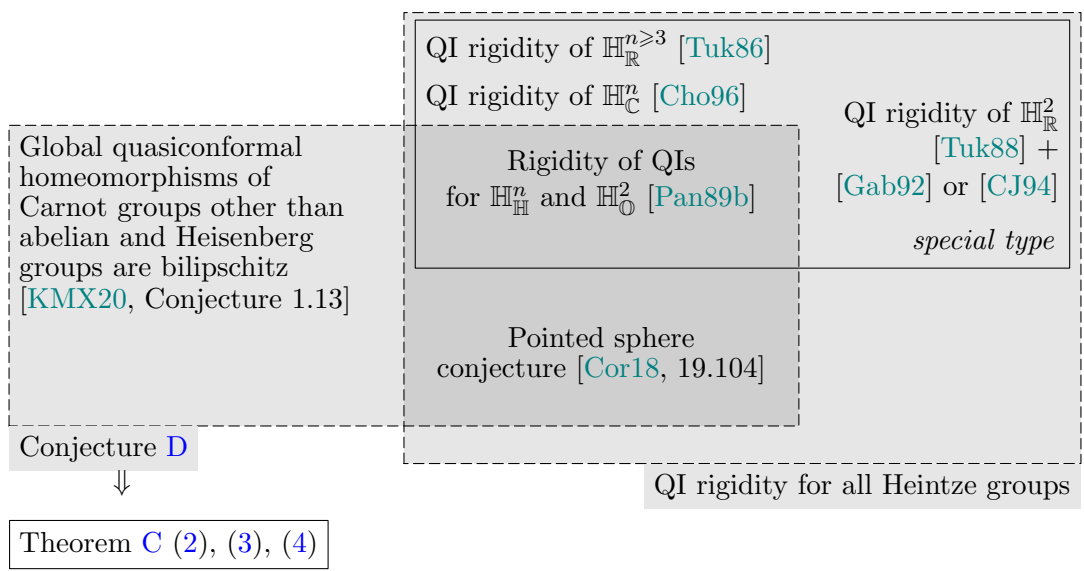

A common significant ingredient between quasiisometric rigidity and Theorem~\ref{th:reformulation} can be singled out in the case of Heintze groups that are not of special type. It is the following.

\begin{PSconjecture*}[Cornulier, {\cite[19.104]{Cornulier:qihlc}}]
Let $\phi$ be a self quasiisometry of a Heintze group, not of special type. Then the extension of $\phi$ to the Gromov boundary of $S$ fixes the unique boundary point that is fixed by all left-translations of $S$.
\end{PSconjecture*}

While conjectural in general, the following scheme of proof for Conjecture \ref{main-conj} should help the reader to understand our approach of some of the special cases of it in the present paper.

\begin{proof}[Proof of Conjecture~\ref{main-conj} assuming the Pointed Sphere Conjecture and {\cite[Conjecture 1.13]{LDX20p}}] (See Figure \ref{fig:conj-pic}).
Let $S=N \rtimes \mathbb R$ be a Heintze group as in the statement of Conjecture~\ref{main-conj}. Then,
\begin{itemize}
\item
Either $S$ is not of special type. In this case, the Gromov boundary of $S$ can be identified with a one-point compactification of $N$, with the boundary extension of $\phi$ stabilizing $N$. By \cite[Conjecture 1.13]{LDX20p}, then, the boundary extension of $\phi$ to $N$ equipped with a Carnot-Carath\'eodory metric should be bilipschitz, which, by \cite{SX}, implies that $\phi$ is a rough isometry.
\item 
Or $S$ is of special type. Then, by assumption, it is a closed cocompact subgroup of $\operatorname{Sp}(n,1)$ or $F_4^{(-20)}$ for some $n \geqslant 2$. The quasiisometries of $S$ are at a bounded distance from isometries of a left-invariant symmetric Riemannian metric on $S$ by \cite{PansuCCqi}, which implies by Theorem~\ref{main-heintze} that they are rough isometries of any left-invariant Riemannian metric, as mentionned in the paragraph below Theorem \ref{th:reformulation}. \qedhere
\end{itemize}
\end{proof}

The QI rigidity conjecture for Heintze groups, on the other hand, would follow from a combination of the QI rigidity for special-type groups, which were obtained in the 1980s and early 1990s (See Figure \ref{fig:conj-pic}), together with the fact that no finitely generated group should be quasiisometric to a non-special Heintze group.
We refer to \cite[Proof of Corollary 1.3]{SX} for how the Pointed Sphere Conjecture implies the last statement.

Finally, an analogy coming from the world of finitely-generated groups may lead one to think of quasiisometries of groups as large-scale counterparts of homotopy equivalences between compact manifolds. 
Following this analogy, at least in nonpositive curvature, rough isometries are large-scale counterparts to those homotopy equivalences that identify the marked length spectra (see e.g. \cite{Fujiwara}).
Theorem~\ref{th:reformulation} may then be considered analogous to the rigidity result that would consist in upgrading homotopy equivalence to marked length spectra isomorphism. Mostow's rigidity, which goes from homotopy equivalence to isometry, is strictly stronger, while length spectrum rigidity, which goes from the length spectrum to the isometry type, measures the difference.

}

\subsection{Organisation of the paper}

Section~\ref{sec:preliminary} collects preliminary material, namely definitions and three lemmas from Gromov-hyperbolic geometry.
Section~\ref{sec:heintze} proves Theorem~\ref{main-heintze} and Section~\ref{sec:soltype} proves Theorem~\ref{main-soltype}. Section~\ref{sec:soltype} is the technical heart of the paper, and Theorem~\ref{main-soltype} is significantly harder to prove than  Theorem~\ref{main-heintze}.
In Section~\ref{sec:reformulation} we start by proving a special case of Cornulier's Pointed Sphere Conjecture, which is instrumental in the reformulation of the main theorem of \cite{LeDonneXie}.
Next, we prove the other cases of Theorem~\ref{th:reformulation}.
In Section~\ref{sec:lamplighter} we point out that the conclusion of Theorem~\ref{main-soltype} does not hold in the Lamplighter group, and suggest a strengthening of the conclusion expressed by Theorems~\ref{main-heintze} and~\ref{main-soltype} which would be formulated in term of geometric actions that we did not reach in this paper.

\section{Preliminary}
\label{sec:preliminary}

\subsection{Notation}
If $G,H,N,S$ are Lie groups then $\mathfrak{g}, \mathfrak{h}, \mathfrak{n}, \mathfrak{s}$ are their Lie algebras.

\subsection{Gromov-hyperbolic geometry}

Let $T$ be a tree, $\xi\in \partial T$ a point in the ideal boundary, and $x, y\in T$. Then the intersection of the two rays $x\xi$, $y\xi$ is also a ray:  $x\xi\cap y\xi=z\xi$, where $x\xi$, $y\xi$ branch off at $z$.  The distance $d(x,y)$ equals the distance from $x$ to the branch point $z$ plus the distance from $y$ to the branch point $z$.   A similar statement holds for all  Gromov-hyperbolic spaces.  

The following lemma follows easily from the thin triangle condition.  We omit the proof. 

\begin{lemma}[See Figure~\ref{fig:distance-in-GH}]\label{distance in GH}
Let $X$ be  a proper geodesic  $\delta$-hyperbolic space, $\xi\in \partial X$, and $x,y\in X$.  Then there is a constant $C$ depending only on $\delta$, points $x'\in x\xi$, $y'\in y\xi$  such that  $d(x', y')\le C$ and the concatenation $xx'\cup x'y'\cup y'y$ is a 
   $(1, C)$-quasi-geodesic.  
     Here $x\xi$ denotes any geodesic joining $x$ and $\xi$; similarly for $y\xi$, $xx'$, $x'y'$, $y'y$.     
     In particular, 
    $|d(x,y)-(d(x,x')+d(y,y'))|\le C$. Furthermore,  $x', y'$ can be chosen so that they lie 
       on the same horosphere centered at $\xi$. 
\end{lemma}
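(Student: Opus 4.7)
The plan is to reduce the assertion to the well-known fact that in a $\delta$-hyperbolic space, any two geodesic rays with the same endpoint at infinity eventually fellow-travel at distance $O(\delta)$, and then to adjust the fellow-traveling points so that they sit on a common horosphere. First I would fix geodesic rays $\gamma_{x}$ from $x$ to $\xi$ and $\gamma_{y}$ from $y$ to $\xi$, and a geodesic segment $[x,y]$. These three sides form an ideal geodesic triangle. By the standard thin-triangles property (applied to a suitable exhaustion by finite triangles converging to the ideal one, or directly to ideal triangles as in Bridson--Haefliger or Gromov), there exists a point $c$ (a ``center'' of the ideal triangle) such that $c$ lies within distance $C_{0}=C_{0}(\delta)$ of each of the three sides.

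Next, I would choose $x'\in\gamma_{x}$ and $y'\in\gamma_{y}$ to be any points at distance at most $C_{0}$ from $c$. The triangle inequality immediately gives $d(x',y')\leqslant 2C_{0}$. To check the quasi-geodesic estimate, let $c_{x}\in[x,y]$ with $d(c,c_{x})\leqslant C_{0}$; since $c_{x}$ lies on a geodesic from $x$ to $y$, one has $d(x,c_{x})+d(c_{x},y)=d(x,y)$, while $|d(x,x')-d(x,c_{x})|\leqslant 2C_{0}$ and similarly for the $y$-side, because $x',c,c_{x}$ (resp. $y',c,c_{x}$) are pairwise within $2C_{0}$. Combined with the trivial upper bound $d(x,y)\leqslant d(x,x')+d(x',y')+d(y',y)$, this yields $|d(x,y)-(d(x,x')+d(y,y'))|\leqslant C_{1}$ for some $C_{1}=C_{1}(\delta)$, and more generally it is a routine check that the concatenation $xx'\cup x'y'\cup y'y$ is a $(1,C_{1})$-quasi-geodesic: on each of the three pieces one already travels along a geodesic, so the only failure of the quasi-geodesic condition would be backtracking, which the previous additive estimate rules out.

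It remains to arrange that $x'$ and $y'$ lie on the same horosphere centred at $\xi$. Let $\beta_{\xi}$ be a Busemann function at $\xi$, which is $1$-Lipschitz and decreases at unit speed along any geodesic ray to $\xi$. From $d(x',y')\leqslant 2C_{0}$ we get $|\beta_{\xi}(x')-\beta_{\xi}(y')|\leqslant 2C_{0}$, so we may slide $x'$ along $\gamma_{x}$ by at most $2C_{0}$ to a new point $x''$ with $\beta_{\xi}(x'')=\beta_{\xi}(y')$. Then $d(x'',y')\leqslant 4C_{0}$, the quantity $d(x,x'')$ differs from $d(x,x')$ by at most $2C_{0}$, and the concatenation $xx''\cup x''y'\cup y'y$ remains a $(1,C)$-quasi-geodesic for some $C=C(\delta)$, yielding the statement.

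The main obstacle, and the only point where one must be a bit careful, is the identification of the ``center'' $c$: in a general $\delta$-hyperbolic space one does not have ideal triangles literally, and the thin-triangles condition is usually phrased for compact geodesic triangles. I would handle this either by invoking the standard extension of thinness to ideal triangles (available in proper geodesic hyperbolic spaces by a compactness/limit argument on truncations of $\gamma_{x}$ and $\gamma_{y}$), or equivalently by working directly with Gromov products: take $x'=\gamma_{x}(t)$ and $y'=\gamma_{y}(t)$ for $t$ slightly larger than $(y\mid\xi)_{x}$ and $(x\mid\xi)_{y}$ respectively, and use $\delta$-hyperbolicity to bound $d(x',y')$ and the Gromov product $(x\mid y)_{\xi}$. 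The remainder of the argument, including the horospherical adjustment, is then essentially formal.
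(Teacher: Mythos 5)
Your proof is correct and follows exactly the route the paper indicates: the paper omits the argument, remarking only that the lemma ``follows easily from the thin triangle condition,'' and your construction via a center of the ideal triangle $x,y,\xi$ (equivalently, Gromov products) followed by a Busemann-level adjustment is the standard way to carry that out. The only points worth spelling out are that the $(1,C)$-quasi-geodesic bound for $p\in xx'$, $q\in y'y$ follows formally from the endpoint estimate via $d(p,q)\ge d(x,y)-d(x,p)-d(q,y)=d(p,x')+d(x',y')+d(y',q)-C_1$, and that in the horospherical adjustment one should slide whichever of $x',y'$ has the larger Busemann value toward $\xi$ (so the target level is always attained along the ray); both are routine, as you say.
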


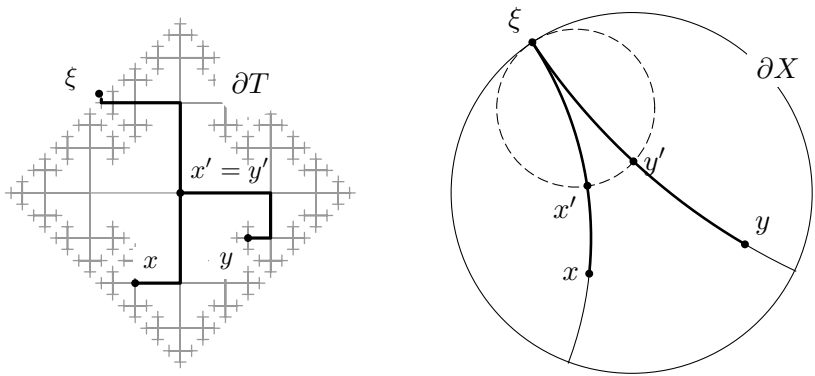
\begin{figure}
\begin{tikzpicture}[line cap=round,line join=round,>=triangle 45,x=0.6cm,y=0.6cm]
\clip(-15,-4.5) rectangle (5,4.5);
\draw(0,0) circle (4);
\draw [dash pattern = on 4pt off 2pt] (-1.24,1.88) circle (1.75);
\draw [shift={(9.79,11.24)}] plot[domain=3.72:4.27,variable=\t]({14.36*cos(\t r)},{14.36*sin(\t r)});
\draw [shift={(-8.78,-1)}] plot[domain=-0.36:0.58,variable=\t]({7.88*cos(\t r)},{7.88*sin(\t r)});
\draw (2.54,3.23) node[stuff_fill,anchor=north west] {$\partial X$};

\draw [shift={(9.79,11.24)}, line width = 1pt] plot[domain=3.72:4.18,variable=\t]({14.36*cos(\t r)},{14.36*sin(\t r)});
\draw [shift={(-8.78,-1)}, line width = 1pt] plot[domain=-0.1:0.58,variable=\t]({7.88*cos(\t r)},{7.88*sin(\t r)});

\fill [color=black] (-2.2,3.34) circle (1.5pt) node[above left] {$\xi$} ;
\fill [color=black] (-0.94,-1.79) circle (1.5pt) node[left] {$x$} ;;
\fill [color=black] (2.51,-1.14) circle (1.5pt)node[above right] {$y$} ;
\fill [color=black] (-0.98,0.16) circle (1.5pt)node[below left] {$x'$} ;
\fill [color=black] (0.04,0.7) circle (1.5pt) node[right] {$y'$} ;

\draw [shift={(-10,0)}, line width=0.5pt, color=black!40] l-system [l-system={cayley, axiom=[F] [+F] [-F] [++F], angle=90, step=1.2cm, order=4}];

\fill [shift={(-10,0)},color=black] (0,0) circle (1.5pt) node[above right] {\small $x'=y'$} ;
\fill [shift={(-10,0)},color=black] (-1,-2) circle (1.5pt)  ;
\draw [shift={(-10,0)},color=black] (-1.05,-1.9) node[stuff_fill, above right] {\small $x$};

\fill [shift={(-10,0)},color=black] (1.5,-1) circle (1.5pt)  ;
\draw [shift={(-10,0)},color=black] (1.4,-1.1) node[stuff_fill, below left] {\small $y$};

\fill [shift={(-10,0)},color=black] (-1.8,2.2) circle (1.5pt)  ;
\draw [shift={(-10,0)},color=black] (-2.05,2.05) node[stuff_fill, above left] {\small $\xi$};

\draw [shift={(-10,0)}, line width = 1.2pt] (-1,-2) -- (0,-2) -- (0,0) -- (2,0) -- (2,-1) -- (1.5,-1);

\draw [shift={(-10,0)}, line width = 1.2pt] (0,0) -- (0,2) -- (-1.75,2) -- (-1.75,2.08);

\draw (-9,3) node[stuff_fillc,anchor=north west] {$\partial T$};

\end{tikzpicture}
\caption{Lemma~\ref{distance in GH} in a tree and in the hyperbolic plane.}\label{fig:distance-in-GH}
\end{figure}

The next two lemmas are more involved, and will not be used before Section~\ref{sec:soltype} where they serve as a preparation for the key step of Theorem~\ref{main-soltype}.
The starting point is a  well-known fact about simply connected Riemannian manifolds with sectional curvature  bounded above by a negative constant: if $p, q$ lie on the same horosphere then the length of every path joining $p$ and $q$ outside the horoball is at least exponential in $d(p,q)$. 
  For  completeness, we provide a proof that is also true for Gromov-hyperbolic spaces.
  
  \begin{lemma}\label{comparison-horosphere}
   Let $X$ be  a proper geodesic   $\delta$-hyperbolic space, $\xi\in\partial X$, $S$ a horosphere centered at $\xi$, and    $B$  the horoball bounded by $S$. 
       Then for every $p, q\in S$ and every path 
    $c$ in $X\backslash B$ joining $p$ and $q$,  the length of $c$ satisfies  $\ell(c)\ge  2^{\frac{d(p,q)-C-2}{2\delta}} -C$, where   
      $C$ is a constant depending only on $\delta$. 
  
  \end{lemma}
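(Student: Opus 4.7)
My plan is to combine Lemma~\ref{distance in GH} with the classical exponential divergence of geodesics in Gromov-hyperbolic spaces.

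First I would locate a point of the geodesic $pq$ deep inside $B$. Applying Lemma~\ref{distance in GH} to $p$, $q$ and $\xi$ produces points $p'\in p\xi$ and $q'\in q\xi$ lying on a common horosphere centered at $\xi$, with $d(p',q')\le C_1$ and $|d(p,q)-d(p,p')-d(q,q')|\le C_1$, where $C_1=C_1(\delta)$. Since $p,q\in S$ and $p',q'$ share a horosphere, one obtains $d(p,p')=d(q,q')$, so each of these quantities differs from $d(p,q)/2$ by at most $C_1/2$. Because the concatenation $pp'\cup p'q'\cup q'q$ is a $(1,C_1)$-quasi-geodesic, it is uniformly close to the geodesic $pq$, so any midpoint $m$ of $pq$ lies within a constant $C_2=C_2(\delta)$ of $p'$. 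Normalizing the Busemann function $f$ at $\xi$ so that $f$ vanishes on $S$ (whence $f\ge 0$ on $X\setminus B$), we obtain
\[ f(m)\;\le\; -d(p,q)/2+C_3 \]
for some $C_3=C_3(\delta)$.

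Next I would observe that $c$ avoids a large ball around $m$. Since $f$ is $1$-Lipschitz on $X$ and non-negative on $X\setminus B$, every $x$ on $c$ satisfies
\[ d(x,m)\;\ge\; f(x)-f(m)\;\ge\; d(p,q)/2-C_3. \]

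Finally, I would invoke the classical exponential divergence of geodesics in $\delta$-hyperbolic spaces: there exist constants $A=A(\delta)$ and $R_0=R_0(\delta)$ such that every continuous path joining two points $y,z\in X$ while avoiding an open ball $B(w,R)$ centered at a point $w$ on the geodesic $yz$ at distance at least $R_0$ from both endpoints has length at least $2^{R/\delta}-A$, provided $R\ge R_0$. Applying this with $y=p$, $z=q$, $w=m$ and $R=d(p,q)/2-C_3$ yields
\[ \ell(c)\;\ge\; 2^{(d(p,q)-2C_3)/(2\delta)}-A, \]
which is the stated bound after enlarging the constants. The main obstacle will be the exponential divergence fact in the last step: it is not a direct consequence of Lemma~\ref{distance in GH} but is established by an independent induction on $R$ using $\delta$-thinness of triangles (cutting the path in half and showing that each half still avoids a ball of radius $R-O(\delta)$ around some point still on the geodesic between its new endpoints). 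I would cite this as folklore in Gromov-hyperbolic geometry or include it as a short separate lemma.
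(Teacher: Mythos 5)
Your proposal is correct and follows essentially the same route as the paper: both arguments locate a point of the geodesic $pq$ lying at depth roughly $d(p,q)/2$ inside the horoball and then invoke the classical estimate (Proposition 1.6, p.~400 in Bridson--Haefliger) that a path avoiding a ball centered at a point of a geodesic joining its endpoints has exponentially large length. The only cosmetic differences are that you find the deep point as the midpoint of $pq$ via Lemma~\ref{distance in GH} while the paper takes the ``highest'' point of $pq$ and uses the Morse lemma directly, and that your exact equality $d(p,p')=d(q,q')$ should only be claimed up to an additive constant depending on $\delta$, which does not affect the argument.
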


  \begin{figure}
  
  \begin{tikzpicture}[line cap=round,line join=round,>=angle 45,x=0.8cm,y=0.8cm]
\clip(-4.1,-4.1) rectangle (4.1,4.1);
\draw [fill=black,fill opacity=0.05] (0.9,1.21) circle (2.49);
\draw(0,0) circle (4);
\draw [dash pattern = on 4pt off 2pt] (1.75,2.34) circle (1.08);
\draw [fill=black,fill opacity=0.05] (1.1,1.47) circle (1.84);

\draw [samples=50,domain=-0.54:0.46,rotate around={-144.8:(3.63,2.56)},xshift=2.90cm,yshift=2.05cm] plot ({2.53*(1+\x*\x)/(1-\x*\x)},{1.56*2*\x/(1-\x*\x)});
\draw [samples=50,domain=-0.99:0.99,rotate around={-144.8:(3.63,2.56)},xshift=2.90cm,yshift=2.05cm] plot ({2.53*(-1-\x*\x)/(1-\x*\x)},{1.56*(-2)*\x/(1-\x*\x)});

\draw [shift={(-2.3,10.39)}] plot[domain=4.8:5.29,variable=\t]({1*8.58*cos(\t r)+0*8.58*sin(\t r)},{0*8.58*cos(\t r)+1*8.58*sin(\t r)});
\fill [color=black] (2.4,3.2) circle (1.5pt) node[anchor=south west]{$\xi$};
\fill [color=black] (-1.51,1.84) circle (1.5pt)node[anchor= south east]{$p$};
\fill [color=black] (1.48,-1.22) circle (1.5pt)node[anchor=north west]{$q$};
\fill [color=black] (1.1,1.47) circle (1.5pt) node[anchor=west]{$r$};
\fill [color=black] (-0.95,5.71) circle (1.5pt);
\fill [color=black] (0.67,2.33) circle (1.5pt) node[anchor=south east]{$r'$};
\draw (1.1,1.47) -- (2.4,3.2);

\fill (-0.7,1.81) circle (1.5pt) node[anchor=north west]{$p'$};

\fill (1.7,-0.26) circle (1.5pt) node[anchor=north west]{$q'$};

\draw [shift={(-0.36,-0.03)}, line width = 1pt] plot[domain=2.12:5.71,samples=100,variable=\t]({2.19*cos(\t r)},{2.19*sin(\t r)+0.05*(\t-2.12)*(5.71-\t)*sin(10*\t r)});

\draw [<->] (1.1,1.47) -- (0.26,-0.16);
\draw (0.7,0.75) node[stuff_fillg, anchor=center]{\tiny $\frac{d(p,q)}{2}-2C_2$};

\draw (0.6,-1.75) node[anchor=center]{$c$};
\end{tikzpicture}
\caption{Proof of Lemma~\ref{comparison-horosphere}.}\label{fig:2.2}
  \end{figure}

  \begin{proof}
  Let $\gamma$ be a geodesic between $p$ and $q$ and $r$ be  a ``highest'' point on $\gamma$,  that is,     for any Busemann function $b$ centered at $\xi$, we have
   $b(r)=\min\{b(x)| x\in \gamma\}$, see Figure~\ref{fig:2.2}.   
     We   claim  $B(r, d(p,q)/2-2C_2)\subset B$ for some   constant $C_2$ depending only on $\delta$.     To see this,
       we   first  notice that  $d(r,p)\ge d(p,q)/2$ or $d(r,q)\ge d(p,q)/2$. Without loss of generality we assume 
   $d(r,p)\ge d(p,q)/2$.    
          Next we  consider the path  $\gamma[p, r]\cup r\xi$, where $\gamma[p,r]$ denotes the segment of $\gamma$ between $p$ and $r$.   Since $r$ is   a ``highest'' point on $\gamma$,  it is clear that  $\gamma[p, r]\cup r\xi$ is a $(1,C_1)$ quasi-geodesic  from $p$ to $\xi$  for some constant $C_1$ depending only on $\delta$.   By the Morse Lemma\footnote{Incidentally, the version of the current lemma where $\gamma$ avoids a ball rather than a horoball is a key ingredient in the proof of the Morse Lemma itself. So it actually occurs twice in this proof.}, the Hausdorff distance between $p\xi$ and $\gamma[p, r]\cup r\xi$  is bounded above by a constant $C_2$ depending only on $\delta$.
           Hence $d(r, x)\le C_2$ for some $x\in p\xi$.   Let   $r'\in p\xi$ be the point   at the same height as $r$, that is, $b(r')=b(r)$. Then $d(x, r')\le C_2$ (comparing the Busemann function of $x$ and $r'$ with respect to $\xi$)
           and so by the triangle inequality $d(r, r')\le 2C_2$. It follows that 
            $$b(r)-b(p)=d(r', p)\ge d(p, r)-d(r',r)\ge d(p,r)-2C_2\ge  d(p,q)/2-2C_2.$$ The claim follows from this.
   
   Let $p'\in \gamma$ between $p$ and $r$ such that $d(r, p')=d(p,q)/2-2C_2$, and $q'\in \gamma$ between $r$ and $q$ such that $d(r, q')=d(p,q)/2-2C_2$, see Figure~\ref{fig:2.2}.  
    Then the path 
   $c'=\gamma[p',p]\cup c\cup \gamma[q,q']$ joins $p'$ and $q'$ and lies outside the ball $B(r, d(p,q)/2-2C_2)$. 
      By Proposition 1.6 on  page 400 of \cite{BH}, 
     the length of   $c'$ satisfies 
     $$\ell(c')\ge    2^{\frac{d(p,q)/2-2C_2-1}{\delta}} .$$  
     The lemma follows  with $C=4C_2$   since $\ell(c)=\ell(c')-d(p,p')-d(q,q')$ and $d(p,p')+d(q,q')=d(p,q)-d(p', q')=4C_2$.

  \end{proof}

  For any subset $A\subset X$ as in the above lemma, let  
  $$H(A):=\sup\{b(x)|x\in A\}-\inf\{b(x)|x\in A\}$$
       be the height change of points in $A$.     Such a quantity can also be  similarly defined for  subsets of a Sol-type group since there is a notion of height in a Sol-type group. 
  
  \begin{lemma}\label{path-outside}
   Let $X$ be  a proper geodesic   $\delta$-hyperbolic space, $\xi\in\partial X$,   $b$ a Busemann function based at $\xi$,  $S$ a horosphere centered at $\xi$,  and 
      $B$ the  horoball  with boundary $S$. Let  $p, q\in S$ and 
    $c: [0,l]\rightarrow X\backslash B$ a path    with $c(0)=p$, $c(l)=q$. Then, \begin{enumerate}
    \item 
    The length of $c$ satisfies 
     $\ell(c)\ge  2 H(c)+ 2^{\frac{ d(p,q)-C-2}{2\delta}}-C-5d(p,q)$.  
    \item 
      Assume $H(c)>d(p,q)$. Then  there are   $0\le s<s'\le t'<t\le l$  such that $b(c(s))=b(c(t))<b(c(s'))=b(c(t')) $,    $d(p, q)<|b(c(s))-b(c(s'))|\le 2 d(p,q)$, and
       $\ell(c|_{[s, s']})+\ell(c_{[t', t]})\ge   2^{\frac{ d(p,q)-C-2}{2\delta}}-C-d(p,q).$
    \end{enumerate}

        {Here $C$ is the constant from Lemma~\ref{comparison-horosphere}, especially it only depends on $\delta$}.
  \end{lemma}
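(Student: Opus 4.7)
The plan is to prove~(2) first (which will turn out to be essentially a restatement of Lemma~\ref{comparison-horosphere}) and then derive~(1) from it combined with the Busemann--Lipschitz bound.

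For part~(2), I would set $s := 0$ and $t := l$, write $h_0 := b(p) = b(q)$, and pick $s' \in (0, l)$ with
\[
b(c(s')) \in \bigl(h_0 + d(p,q),\; h_0 + \min\{2d(p,q), H(c)\}\bigr].
\]
Such $s'$ exists because the hypothesis $H(c) > d(p,q)$ makes the displayed interval nonempty, and the continuous function $b\circ c$ starts and ends at $h_0$ while attaining $h_0 + H(c)$, so by the intermediate value theorem it takes every intermediate value. Set $t' := s'$: then $s < s' \leq t' < t$, the equalities $b(c(s)) = b(c(t)) = h_0$ and $b(c(s')) = b(c(t'))$ are immediate, and $|b(c(s)) - b(c(s'))| \in (d(p,q), 2d(p,q)]$, as required. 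Since $s' = t'$, one has $\ell(c|_{[s,s']}) + \ell(c|_{[t',t]}) = \ell(c|_{[0,s']}) + \ell(c|_{[s',l]}) = \ell(c)$, and the bound of~(2) follows directly from Lemma~\ref{comparison-horosphere}.

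For part~(1), two lower bounds on $\ell(c)$ are immediately available: Lemma~\ref{comparison-horosphere} gives $\ell(c) \geq 2^{(d(p,q)-C-2)/(2\delta)} - C$, and the $1$-Lipschitzness of $b$ gives $\ell(c) \geq 2H(c)$ (the total variation of $b\circ c$ is at least $2H(c)$, since $b\circ c$ starts at $h_0$, attains $h_0 + H(c)$, and returns to $h_0$). The claim asks for their sum, not just their maximum, so the two contributions must be separated. When $H(c) \leq 2d(p,q)$, the term $2H(c) \leq 4d(p,q)$ fits inside the $-5d(p,q)$ slack and the claim reduces to Lemma~\ref{comparison-horosphere}. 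When $H(c) > 2d(p,q)$, I split $c$ at the first and last times $\sigma_1, \sigma_2$ at which it attains the intermediate depth $h_0 + 2d(p,q)$: the deep portion $c|_{[\sigma_1, \sigma_2]}$ has length at least $2(H(c) - 2d(p,q))$ since $b\circ c$ traverses from $h_0 + 2d(p,q)$ up to $h_0 + H(c)$ and back. For the shallow portions $c|_{[0,\sigma_1]}$ and $c|_{[\sigma_2, l]}$, I form the competitor $c|_{[0, \sigma_1]} \cdot \gamma \cdot c|_{[\sigma_2, l]}$ in $X \setminus B$ with $\gamma$ a short path from $c(\sigma_1)$ to $c(\sigma_2)$; a triangle-inequality routed through $p$ and $q$ yields $d(c(\sigma_1), c(\sigma_2)) \leq \ell(c|_{[0,\sigma_1]}) + d(p,q) + \ell(c|_{[\sigma_2, l]})$, so taking $\gamma$ with $\ell(\gamma) \leq d(c(\sigma_1), c(\sigma_2)) + O(d(p,q))$ and applying Lemma~\ref{comparison-horosphere} to the competitor, a linear rearrangement gives $\ell(c|_{[0,\sigma_1]}) + \ell(c|_{[\sigma_2,l]}) \geq 2^{(d(p,q)-C-2)/(2\delta)} - C - O(d(p,q))$ at the cost of absorbing a factor into $C$. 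Adding this to the deep-portion estimate yields~(1).

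The main obstacle is the construction of the connector $\gamma$. When the geodesic in $X$ from $c(\sigma_1)$ to $c(\sigma_2)$ remains outside $B$, it can be taken as $\gamma$; otherwise, one needs a short detour staying in $X \setminus B$ that one builds using the coarse Lipschitzness of vertical projections onto horospheres in the $\delta$-hyperbolic setting. Keeping track of the $O(\delta)$ additive errors arising from replacing CAT$(-1)$ projections by coarse ones is the technical content here; they all get absorbed into the constant $C$ in the final estimate.
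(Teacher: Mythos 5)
Your part (1) has a genuine gap in the case $H(c)>2d(p,q)$. You split $c$ at the first and last times $\sigma_1,\sigma_2$ at which it reaches the fixed depth $h_0+2d(p,q)$ and claim that the shallow portions $c|_{[0,\sigma_1]}$, $c|_{[\sigma_2,l]}$ alone carry the exponential lower bound, via a connector $\gamma\subset X\setminus B$ from $c(\sigma_1)$ to $c(\sigma_2)$ with $\ell(\gamma)\le d(c(\sigma_1),c(\sigma_2))+O(d(p,q))$. No such connector exists in general: Lemma~\ref{comparison-horosphere} is precisely the statement that a path forced to avoid a horoball between two far-apart points must be exponentially long, and the same phenomenon applies here, since $c(\sigma_1),c(\sigma_2)$ lie only $2d(p,q)$ below $S$, so when they are far apart the geodesic between them penetrates deep into $B$ and every detour inside $X\setminus B$ has length exponential in $d(c(\sigma_1),c(\sigma_2))$, not $d(c(\sigma_1),c(\sigma_2))+O(d(p,q))$. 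Worse, the bound you want for the shallow portions is simply false: in $\mathbb H^2$ (upper half-plane, $\xi=\infty$, $S=\{y=1\}$) let $c$ drop vertically from $p$ to depth $2d(p,q)$, continue down to depth $3d(p,q)$, run along that horocycle until below $q$, and climb vertically back up to $q$. Then $H(c)=3d(p,q)>2d(p,q)$, the two shallow portions have total length $4d(p,q)$, and the entire exponential length sits in the deep portion --- exactly the piece your decomposition has already ``spent'' on the height estimate $2(H(c)-2d(p,q))$, so your accounting cannot recover (1) even though (1) holds for this path. The missing idea is the paper's adaptive choice of level: slice $[b(p),b(p)+H(c)]$ into bands of width in $(d(p,q),2d(p,q)]$, let $s_i$ (resp.\ $t_i$) be the last time before (resp.\ first time after) the lowest point of $c$ at which $c$ is at level $b_i$, and take the first index $k$ with $d(c(s_{k+1}),c(t_{k+1}))<d(p,q)$; the geodesic joining $c(s_{k+1})$ to $c(t_{k+1})$ then stays below the horosphere $b=b_k$ automatically (its length is smaller than the band width), so Lemma~\ref{comparison-horosphere} applies at that band with base points $c(s_k),c(t_k)$, still at least $d(p,q)$ apart, while every other band is charged only its height change. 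The depth at which the exponential cost occurs depends on the path and cannot be fixed in advance at $2d(p,q)$.

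Concerning part (2): your choice $s=0$, $t=l$, $s'=t'$ does satisfy the inequalities as literally written (it is just Lemma~\ref{comparison-horosphere} applied to $c$, plus the intermediate value theorem), but it empties the statement of its content, since the two subsegments are all of $c$ and their complement is empty. The way (2) is used later (proof of Lemma~\ref{claim}, Subcase I.1.b) requires the two segments to lie between matched levels at most $2d(p,q)$ apart, so that removing them leaves a complement still responsible for essentially all of the height change; with your degenerate choice that step collapses. The band construction above yields (2) with this extra structure (take $s=s_k$, $s'=s_{k+1}$, $t'=t_{k+1}$, $t=t_k$), and (1) then follows by summing the height changes over the remaining bands. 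Finally, your rearrangement in (1) also loses a factor $\tfrac12$ in front of the exponential, i.e.\ it replaces $C$ by $C+2\delta$ in the exponent; that alone would be tolerable, but the two issues above are fatal to the argument as proposed.
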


  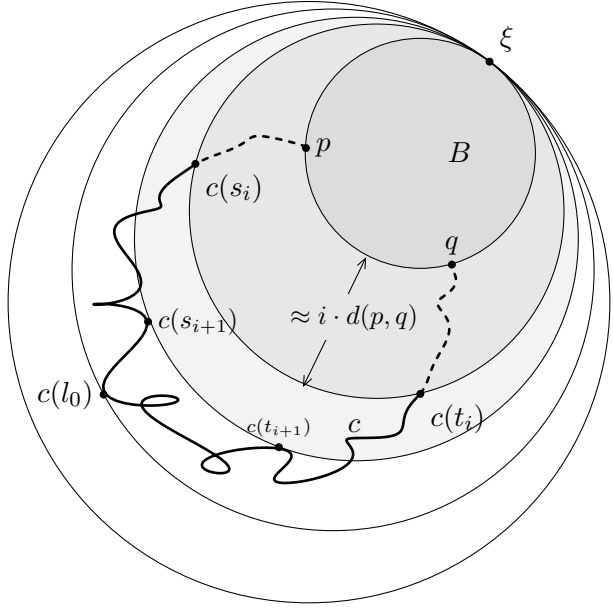
\begin{figure}
  
  \begin{tikzpicture}[line cap=round,line join=round,>=angle 45,x=1.0cm,y=1.0cm]
\clip(-4.1,-4.1) rectangle (4.1,4.1);
\draw [fill=black,fill opacity=0.05] (0.9,1.21) circle (2.49cm);
\draw(0,0) circle (4cm);

\fill [color=black] (2.4,3.2) circle (1.5pt) node[anchor=south west]{$\xi$};
\fill [color=black] (-1.51,1.84) circle (1.5pt)node[anchor= north west]{$c(s_i)$};
\fill [color=black] (1.48,-1.22) circle (1.5pt)node[anchor=north west]{$c(t_i)$};

\draw [color=black] (0.32,0.43) circle (3.47cm);

\draw [fill=black, fill opacity=0.05] (0.63,0.84) circle (2.95cm);

\draw [shift={(-0.36,-0.03)}, line width = 1pt] plot[domain=2.12:5.71,samples=200,variable=\t]({2.19*cos(\t r)+0.12*(\t-2.12)*(5.71-\t)*cos(15*\t r)},{2.19*sin(\t r)+0.09*(\t-2.12)*(5.71-\t)*sin(10*\t r)});

\draw [shift={(-0.36,-0.03)}, line width = 1pt, dash pattern=on 2pt off 3pt] plot[domain=1.48:2.12,samples=100,variable=\t]({2.19*cos(\t r)+0.05*(\t-2.12)*(5.71-\t)*cos(15*\t r)},{2.19*sin(\t r)+0.04*(\t-2.12)*(5.71-\t)*sin(10*\t r)});

\draw [shift={(-0.36,-0.03)}, line width = 1pt, dash pattern=on 2pt off 3pt] plot[domain=5.71:6.55,samples=100,variable=\t]({2.19*cos(\t r)+0.05*(\t-2.12)*(5.71-\t)*cos(15*\t r)},{2.19*sin(\t r)+0.04*(\t-2.12)*(5.71-\t)*sin(10*\t r)});

\draw [fill=black, fill opacity = 0.05] (1.48,1.98) circle (1.53cm);

\fill [color=black] (-2.14,-0.26) circle (1.5pt)node[anchor=west]{\small $c(s_{i+1})$};
\fill [color=black] (-0.40,-1.93) circle (1.5pt)node[anchor= south]{\tiny $c(t_{i+1})$};

\fill [color=black] (-2.73,-1.23) circle (1.5pt)node[anchor= east]{ $c(l_0)$};

\fill [color=black] (-0.04,2.05) circle (1.5pt)node[anchor=west]{ $p$};
\fill [color=black] (1.9,0.5) circle (1.5pt)node[anchor= south]{ $q$};


\draw (0.6,-1.65) node[anchor=center]{$c$};

\draw [<->] (0.76,0.6) -- (-0.08,-1.13);
\draw (0.6,-0.2) node[stuff_fillg, anchor=center]{\small $\approx i \cdot d(p,q)$};

\draw (2,2) node[anchor=center]{$B$};
\end{tikzpicture}
\caption{Proof of Lemma~\ref{path-outside} in the situation where $H(c)$ is much larger than $d(p,q)$.}
  \end{figure}

  \begin{proof}  The lemma follows immediately from Lemma~\ref{comparison-horosphere}
  when $H(c)\le d(p, q)$. So we assume $H(c)>d(p,q)$.  
  Let $b(p)=b_0<b_1<\cdots <b_m=b(p)+H(c)$ be such that $ d(p, q)<b_{i+1}-b_i\le 2 d(p,q)$.   
     Let $l_0\in [0, l]$ be such that $c(l_0)$ is a  lowest point on $c$, that is,  $b(c(l_0))=\max\{b(x)|x\in c\}$.
   For each $1\le i< m$, let $s_i\in [0, l_0]$ be the last $t$ in $[0, l_0]$  satisfying $b(c(t))=b_i$ and similarly   let $t_i\in [l_0, l]$   be  the first  $t$ 
     in $[l_0,l]$    
    satisfying $b(c(t))=b_i$. 
    We also set    $s_0=0$, $t_0=l$  and $s_m=t_m=l_0$.   
      The choices of
      $s_i$ and $t_i$ imply that $c|_{[s_i, s_{i+1}]}$  and $c|_{[t_{i+1}, t_{i}]}$
         lie  below the horosphere $b=b_i$, that is, $b(c(t))\ge b_i$ for $t\in [s_i, s_{i+1}]\cup [t_{i+1}, t_i]$.     Let $k$ be   the integer such that 
          $d(c(s_i), c(t_i))\ge  d(p, q)$ for all $i\le k$ and $d(c(s_{k+1}), c(t_{k+1}))<d(p,q)$.   Let $\gamma$ be a geodesic between $c(s_{k+1})$ and $c(t_{k+1})$. Then the path 
           $c|_{[s_k, s_{k+1}]}\cup \gamma\cup c|_{[t_{k+1}, t_k]}$ is a path below the horosphere $b=b_k$ joining $c(s_k)$ and $c(t_k)$.
            Now Lemma~\ref{comparison-horosphere}    
               implies 
               $$\ell(c|_{[s_k, s_{k+1}]})+\ell(c_{[t_{k+1}, t_k]})\ge   2^{\frac{ d(p,q)-C-2}{2\delta}}-C-d(p,q).$$  Now for each $i\not=k$  by considering the height change 
  we get
             $\ell(c|_{[s_i, s_{i+1}]}),   \ell(c|_{[t_{i+1}, t_{i}]}) \ge |b_i-b_{i+1}|$.     Now  (1)  follows   as $\ell(c)=\sum_i  (\ell(c|_{[s_i, s_{i+1}]})+ \ell(c|_{[t_{i+1}, t_{i}]}))$   and 
                $\sum_i |b_i-b_{i+1}|=H(c)$.    (2) holds with $s=s_k$, $s'=s_{k+1}$, $t'=t_{k+1}$, $t=t_k$.

  \end{proof}

\subsection{Heintze groups}
\label{subsec:heintze-definition}

Given a derivation $D$ on a Lie algebra $\mathfrak n$, we denote by  $\mathfrak n \rtimes_{D} \mathbb R$ the Lie algebra obtained as a semidirect product $\mathfrak n \rtimes \mathbb R$
 where $1\in \mathbb R$ acts on $\mathfrak n$ by the derivation $D$.

 \begin{definition}[Heintze group]\label{def:heintze-group}
Let $N$ be a nilpotent simply connected Lie group and let $D $ be a derivation of $\mathfrak n$ that has only eigenvalues with positive real parts and the smallest one has real part equal to one.
A {\em Heintze group} is a simply connected solvable Lie group having Lie algebra $\mathfrak n \rtimes_D \mathbb R$.
\end{definition}

Heintze groups are Gromov-hyperbolic. 
Even better, they have at least one lef-invariant Riemannian metric of negative sectional curvature \cite{Heintze}, and this is a characterization among connected Lie groups.

\begin{definition}[Carnot-type Heintze group]
A Heintze group $S$ is of {\em Carnot type} if $\ker(D-1)$ Lie generates $\mathfrak n$; this does not depend on the derivation $D$ such that $\mathfrak s \simeq \mathfrak n \rtimes_D \mathbb R$.
\end{definition}

The rank-one type Heintze groups defined in the Introduction are of Carnot type.

A Heintze group has a distinguished family of horospheres, disregarding the choice of a particular left-invariant Riemannian metric. 
Those are left cosets of the derived subgroup $N$.
By focal point of a Heintze group we mean the limit point of the subgroup $N=[S,S]$ in the Gromov boundary. 
When $S$ is naturally acting on its Gromov boundary, this point is the only one fixed by $S$.

\begin{definition}[Real shadow]
Let $D$ be a derivation of a real Lie algebra $\mathfrak n$.
The derivation $D$ may be decomposed into commuting components $D = D_{\mathsf {ss,r}} + D_{\mathsf {ss,i}} + D_{\mathsf n}$,
where $D_{\mathsf {ss,r}}$ is semisimple with a real spectrum, $D_{\mathsf {ss,i}}$ is semisimple with purely imaginary spectrum, and $D_{\mathsf n}$ is nilpotent, all being derivations (\cite[Corollary 2.6]{LDGdilation}).
The real shadow of $\mathfrak s = \mathfrak n \rtimes_D \mathbb R$ is defined as $\mathfrak s_0 = \mathfrak n \rtimes_{(D_{\mathsf {ss,r}} + D_{\mathsf n})} \mathbb R$.
\end{definition}

Heintze groups with a real shadow of Carnot type may be characterized geometrically by the fact that the conformal gauge on their boundary at infinity minus the focal point contains a geodesic metric, indeed even a subRiemannian one.

\subsection{Sol-type groups}
\label{subsec:soltype-definition}

We define below a class of solvable groups, the most prominent of which is the three-dimensional group SOL.

\begin{definition}[Sol-type]
\label{def:soltype}
Let $N_1, N_2$ be a pair of simply connected nilpotent Lie groups. 
Let $\lambda >0$.
Let $D_1, D_2$ be a pair of derivations of $\mathfrak n_1$ and $\mathfrak n_2$, respectively, so that $\mathfrak n_1 \rtimes_D \mathbb R$ and $\mathfrak{n}_2 \rtimes_D \mathbb R$ are the Lie algebras of two Heintze groups $S_1$ and $S_2$, i.e.,
the real parts of the eigenvalues of $D_1, D_2$ are positive and they are normalized so that the smallest ones of each  have real parts equal to one.
The derivation $D = D_1 \oplus (- \lambda D_2)$ acts on the Lie algebra $\mathfrak{n_1} \times \mathfrak{n}_2$ and the corresponding semi-direct product \[S = (N_1 \times N_2) \rtimes \mathbb R\]
is called a {\em Sol-type group}.
\end{definition}

A Sol-type group is unimodular if and only if $\Re \operatorname{tr}(D_1) = \lambda \Re \operatorname{tr}(D_2)$ (which does not depend on $D_1$ and $D_2$ chosen).

Similar to SOL, the group $G$  is foliated by the left cosets of $S_i = N_i \rtimes \mathbb R$.  Note that  $S_2$ is a ``upside down'' Heintze group, while $S_1$ is   right side up. See Figure~\ref{fig:soltype}.

\begin{center}
\begin{figure}
\begin{tikzpicture}[line cap=round,line join=round,>=angle 45,x=0.4cm,y=0.4cm]
\clip(-9,-10) rectangle (10,7);
\draw (-6,0)-- (-6,-10);
\draw (-6,0)-- (0,6);
\draw (0,6)-- (8,2);
\draw (-6,-10)-- (0,-4);
\draw (8,-8)-- (0,-4);
\draw (8,2)-- (8,-8);
\draw (0,6)-- (0,-4);
\draw (-5.91,0.32) node[anchor=north west] {$ S_1 $};
\draw (6.42,2.03) node[anchor=north west] {$ S_2 $};
\draw (1,-8.03) node[anchor=north west] {$ N $};
\draw [color=black, line width=1pt,domain=-5:-1, samples = 50] plot(\x,{-(\x+3)*(\x+3)+\x +1});
\draw [color=black, line width=1pt,domain=1.5:6, samples = 50] plot(\x,{(\x-4)*(\x-4)-\x/2 -2});
\draw [dash pattern=on 3pt off 3pt] (0,0)-- (-6,-6)-- (2,-10)-- (8,-4) -- cycle;
\draw [->] (0,0) -- (-1,-1) node[below]{$\mathfrak n_1$};
\draw [->] (0,0) -- (2,-1) node[above]{$\mathfrak n_2$};
\end{tikzpicture}
\caption{Sketch view of a Riemannian Sol-type group and two geodesics. Note that we do not assume that $\mathfrak n_1 \perp \mathfrak{n}_2$.}
\label{fig:soltype}
\end{figure}
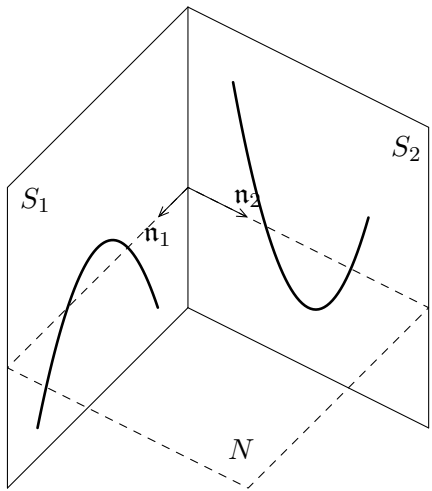

\end{center}

\subsection{Height}

\begin{definition}
Let $S = N \rtimes_D \mathbb R$ be a Heintze group as in Definition \ref{def:heintze-group}.
The projection $h\colon S \to \mathbb R$ is called the {\em height function} of $S$.
\end{definition}

\begin{definition}
Let $G = N \rtimes_D \mathbb R$ be a Sol-type group as in Definition \ref{def:soltype}.
The projection $h\colon S \to \mathbb R$ is called the {\em height function} of $G$.
\end{definition}

\section{Left-invariant Riemannian metrics on Heintze Groups}
\label{sec:heintze}

In this section we show that every two left-invariant Riemannian metrics on an Heintze group $S=N\rtimes \mathbb R$ are roughly similar through the identity map, see Theorem~\ref{main-heintze}.  
   
\begin{lemma}
\label{lem:perp-section}
Let $S$ be a simply connected solvable Lie group and assume that $N=[S,S]$ has codimension $1$ in $S$. For every left-invariant Riemannian metric $g$ on $S$, there exists a one-parameter subgroup $c \colon  S/N \to S$ that is a geodesic such that $\dot c(0) \perp \mathfrak n$ and $\pi \circ c$ is the identity on $S/N$, if $\pi \colon S \to S/N$ denotes the associated projection.
\end{lemma}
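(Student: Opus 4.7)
The plan is to identify the desired geodesic one-parameter subgroup with the one generated by a suitably normalized vector in $\mathfrak n^{\perp}$, the orthogonal complement of $\mathfrak n$ inside $\mathfrak s$ with respect to the inner product $g_e$ induced on $T_e S = \mathfrak s$. Since $\mathfrak n$ has codimension $1$, the complement $\mathfrak n^\perp$ is $1$-dimensional, and the projection $\pi_\ast \colon \mathfrak s \to \mathfrak s/\mathfrak n$ restricts to an isomorphism on $\mathfrak n^\perp$. Identifying $S/N$ with $\mathbb R$ via $\pi$, we let $X$ be the unique element of $\mathfrak n^\perp$ satisfying $\pi_\ast(X) = 1$, and set $c(t) := \exp(tX)$. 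Then $\pi(c(t)) = t$ by construction and $\dot c(0) = X \in \mathfrak n^\perp$, so the two auxiliary conditions are immediate.

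The only non-trivial point is to show that $c$ is a geodesic. For this I invoke the standard formula for the Levi-Civita connection of a left-invariant Riemannian metric applied to left-invariant vector fields: for any $Z \in \mathfrak s$,
\[
2\langle \nabla_X X, Z\rangle \;=\; -\,2\langle [X,Z], X\rangle,
\]
so $\nabla_X X = 0$ is equivalent to $\langle X, [X,Z]\rangle = 0$ for every $Z \in \mathfrak s$.

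The key observation is now that every commutator $[X,Z]$ automatically lies in $[\mathfrak s,\mathfrak s] = \mathfrak n$, while by choice $X \in \mathfrak n^\perp$; hence $\langle X, [X,Z]\rangle = 0$ for all $Z$. Consequently $\nabla_X X = 0$ and, since $X$ is left-invariant, the one-parameter subgroup $t\mapsto \exp(tX)$ is a geodesic through $e$. This completes the argument. I expect no serious obstacle here: the proof reduces to the elementary fact that the nilradical $\mathfrak n$ already contains every bracket, which makes the orthogonal one-parameter subgroup automatically auto-parallel, and this is really the only place where the hypothesis that $\mathfrak n = [\mathfrak s,\mathfrak s]$ has codimension one is used (to guarantee that $\mathfrak n^\perp$ is one-dimensional and projects isomorphically onto $\mathfrak s/\mathfrak n$).
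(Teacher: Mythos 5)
Your proof is correct and follows essentially the same route as the paper: the paper also applies the Koszul formula for the left-invariant metric (in Milnor's $\operatorname{ad}^{\ast}$ form) to a vector $T \in \mathfrak n^{\perp}$ and concludes $\nabla_{\dot c}\dot c = 0$ from the fact that all brackets lie in $[\mathfrak s,\mathfrak s] = \mathfrak n$. Your normalization $\pi_\ast(X)=1$ (rather than the paper's unit-norm choice) is if anything the cleaner way to get the section property $\pi\circ c = \mathrm{id}$, but the substance of the argument is identical.
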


\begin{proof}
Let $\nabla$ be the Levi-Civita connection of the left-invariant metric $g$ on $S$. 
Then by Koszul's formula for the Levi-Civita connection (see e.g. (5.3) in \cite{MilnorCurLie}), for every $X,Y \in \mathfrak s$, $\nabla_X Y = \frac{1}{2} \left( \operatorname{ad}_X Y - \operatorname{ad}^{\ast}_X Y - \operatorname{ad}^{\ast}_Y X \right)$, where $\operatorname{ad}^{\ast}_X$ is such that $g(\operatorname{ad}^{\ast}_X Y, Z) = g(Y, \operatorname{ad}_X Z)$ for all $Y,Z$. 
It follows that the one-parameter subgroup $c$ generated by $T$ with $T \in \mathfrak{n}^{\perp}$ and $g(T,T) = 1$ is a geodesic of $g$, since $\nabla_{\dot{c}}{\dot {c}} = 0$.
\end{proof}
   
  The key of the proof   of    Theorem~\ref{main-heintze}   is Lemma~\ref{distance in GH} from the previous section and the fact that for every two one-parameter subgroups $c_1$, $c_2$ of $S$ not contained in $N$, every left coset of $c_1(\mathbb R)$ is  at  bounded distance from a unique left coset of $c_2(\mathbb R)$, see Lemma~\ref{1parameter}.   
 
 Let $g_1, g_2$ be two left-invariant Riemannian metrics on a Heintze group $S$. Let $c_1$ and $c_2$ be the one-parameter subgroups associated to $g_1$ and $g_2$ respectively by Lemma~\ref{lem:perp-section}.

  In the case when $c_1$ and $c_2$ have the same image, the rest of the proof of Theorem~\ref{main-heintze} is quite simple. We shall treat this case in the next section; afterwards we consider the general case.
  
  \begin{proof}[Proof of Theorem~\ref{main-heintze} when $c_1$ and $c_2$ have the same image]

  {\color{black} Observe that the height map  $S \to \mathbb R$ is $1$-Lipschitz, where we equip $S$ with $d_i$ and $S/N$ with the Hausdorff distance $\mathrm{Hausdist}_{d_i}$ for $i=1,2$.
  From now on we decompose $S$ topologically as a product $N \times \mathbb R$ where $c_1(t) = (1_N,t)$ for all $t \in \mathbb R$, and for all $n \in N$ we denote $\mathfrak c_n$ the curve $\mathfrak c_n(t)=(n,t)$. 
  By rescaling the metric $g_1$ and $g_2$ we may assume that $c_1=c_2$, and that they are unit speed geodesics for $d_1$ and $d_2$.
  It follows from the normalization convention that for $i=1,2$, $\mathrm{Hausdist}_{d_i}$ on $S/N$ is also the standard absolute value on $\mathbb R$. A useful consequence is that if two subsets are at $d_i$-Hausdorff distance bounded by $H$ for some $i$, then so are their maximal heights also differ by $H$.}
  
  Let $C$ be the constant from Lemma~\ref{distance in GH} for both $d_1$ and $d_2$.
   We shall show that the identity map $\text{Id}: (S, d_1)\rightarrow (S, d_2)$ is a rough isometry.  
   Let $x=(n, t), \tilde x=(\tilde n, \tilde t)\in S$.    
Our  assumption implies that  the curves 
$\mathfrak c_n$ and $\mathfrak c_{\tilde n}$ are unit speed minimizing geodesics with respect to both $d_1$ and $d_2$.   
  Because of Lemma~\ref{distance in GH},
  for each  $i=1, 2$ exists  $t_i$  such that
   the path $\beta_i:=x \mathfrak c_n(t_i)\cup \mathfrak c_n(t_i) \mathfrak c_{\tilde n}(t_i)\cup \mathfrak c_{\tilde n}(t_i)y$  is a $(1, C)$-quasi-geodesic in $(S, d_i)$ from $x$ to $y$.    Since the identity map $(S, d_1)\rightarrow (S, d_2)$ is biLipschitz,  the path $\beta_2$ is an  $(L,A)$-quasi-geodesic in $(S, d_1)$ from $x$ to $y$, where $L$, $A$ depend only on $d_1$ and $d_2$.   By the Morse Lemma, the Hausdorff distance between $\beta_1$ and $\beta_2$ in $(S, d_1)$ is bounded above by  a constant depending only on $d_1$ and $d_2$.  Comparing heights we see
     that 
     $|t_1-t_2|$   is bounded above by a constant depending only on $d_1$ and $d_2$.  
      Finally Lemma~\ref{distance in GH}  implies that $|d_1(x,y)-d_2(x,y)|$ is 
  bounded above by a constant depending only on $d_1$ and $d_2$.   This finishes the proof  of Theorem~\ref{main-heintze}  
    when $c_1$ and $c_2$ have the same image.
  \end{proof}
 \subsection{The general case: $c_1$ and $c_2$ might have different images}
 
 In order to consider the general case in the proof  of Theorem~\ref{main-heintze}, we need  
 the  following lemma. We shall  abbreviate the image of $\mathbb R$  under a one-parameter subgroup $c \colon \mathbb R\rightarrow S$ by $c$.  

\begin{lemma}\label{1parameter}
Let $S$ be a Heintze group with derived subgroup $N$. 
Equip $S$ with a left-invariant {\color{black} Riemannian} metric $g$.
For every two one-parameter subgroups $c_1$, $c_2$ of $S$ not contained in $N$, there is a positive number $C$ 
(depending   on $c_1, c_2$ and $g$) 
such that for every $s_1 \in S$, there is a unique left coset $s_2c_2$ of $c_2$, with $s_2\in S$,
   such that
\[ \mathrm{Hausdist}_d(s_1c_1, s_2c_2)\le C,\] 
where $\mathrm{Hausdist}_d$ denotes the Hausdorff distance with respect to the distance $d$ on $S$ determined by $g$.  
\end{lemma}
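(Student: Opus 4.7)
The plan is to identify both the set of left cosets of $c_1$ and the set of left cosets of $c_2$ with $N$ via their non-focal endpoint on the Gromov boundary $\partial S$, and then to conclude via the Morse Lemma. Throughout, denote by $\omega \in \partial S$ the focal point of $S$.

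First, I will check that every left coset $s c_i$ is an $(L_i,0)$-biLipschitz embedding of $\mathbb{R}$ into $(S,d)$, with $L_i$ depending only on $c_i$ and $g$. In the decomposition $S = N \rtimes \mathbb{R}$, the hypothesis $c_i \not\subset N$ forces the height function $h\colon S\to\mathbb{R}$ to be linear with nonzero slope $a_i$ along $c_i$. Since $h$ is $1$-Lipschitz for $d$ and $c_i$ has constant speed $\|\dot c_i(0)\|_g$, one obtains
\[ |a_i|\,|s-t| \leq d(c_i(s), c_i(t)) \leq \|\dot c_i(0)\|_g \,|s-t|, \]
which transfers to any left translate $s c_i$ because $d$ is left-invariant. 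Since $h(sc_i(t)) \to \pm\infty$ as $t\to\pm\infty$ and a vertical quasi-geodesic in a Heintze group converges at one end to the focal point $\omega$, each $s c_i$ has $\omega$ as one endpoint in $\partial S$ and a unique non-focal endpoint $\xi(sc_i) \in \partial S\setminus\{\omega\}$.

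Next, the $S$-action on $\partial S$ fixes $\omega$ and restricts to a simply transitive $N$-action on $\partial S \setminus \{\omega\}$; this is the standard identification $\partial S\setminus\{\omega\} \simeq N$ for Heintze group boundaries. The vector space decomposition $\mathfrak{s} = \mathfrak{n} \oplus \mathbb{R} X_i$ (where $X_i$ generates $c_i$) shows that every left coset of $c_i$ has a unique representative in $N$, and equivariance of the boundary extension gives $\xi(n c_i) = n \cdot \xi(c_i)$. Composing the bijections $N \to S/c_i$, $n\mapsto nc_i$, and $S/c_i \to \partial S\setminus\{\omega\}$, $sc_i \mapsto \xi(sc_i)$, produces, for each left coset $s_1 c_1$, a unique left coset $s_2 c_2$ sharing its non-focal endpoint. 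Two $(L,0)$-quasi-geodesics in the $\delta$-hyperbolic space $(S,d)$ with the same pair of endpoints in $\partial S$ lie at Hausdorff distance at most $C = C(L,\delta)$ by the Morse Lemma, which gives the required uniform bound depending only on $c_1, c_2, g$.

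The main obstacle is the geometric identification step in the second paragraph: matching the algebraic object ``left coset of $c_i$'' with the boundary object ``non-focal endpoint'', which rests on the standard recognition of $\omega$ as the common endpoint of vertical quasi-geodesics together with the simple transitivity of $N$ on $\partial S \setminus \{\omega\}$. Once these are in place, the biLipschitz estimates and the Morse Lemma are routine in the Gromov-hyperbolic setting.
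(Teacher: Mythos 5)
Your argument is correct and rests on the same basic mechanism as the paper's proof: pair $s_1c_1$ with the coset of $c_2$ having the same two endpoints on $\partial S$ (the focal point $\omega$ and a common non-focal point) and conclude by stability of quasi-geodesics in a Gromov-hyperbolic space; the difference is one of execution. The paper first chooses an auxiliary left-invariant metric $g_2$ with $\dot c_2(0)\perp\mathfrak n$, so that cosets of $c_2$ are genuine $d_2$-geodesics; it then replaces $s_1c_1$ (a $d_2$-quasi-geodesic) by a nearby geodesic via the Morse Lemma, matches endpoints, compares two geodesics with the same endpoints, and transfers the bound back to $d$ using that any two left-invariant Riemannian metrics are biLipschitz through the identity. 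You stay in $(S,d)$ throughout, observing that all cosets of $c_1$ and $c_2$ are uniform biLipschitz lines (your height argument is fine, except that $h$ is Lipschitz for $d$ with a constant depending on $g$ rather than $1$-Lipschitz; this is harmless), and you make explicit, via the decomposition $S=N\,c_2(\mathbb R)$, equivariance, and the simply transitive $N$-action on $\partial S\setminus\{\omega\}$, the existence and uniqueness of the coset of $c_2$ with prescribed non-focal endpoint --- a point the paper leaves implicit in the sentence ``there is a left coset $s_2c_2$ with the same limit points.'' What your route buys is avoiding the auxiliary metric and isolating the boundary bookkeeping; what the paper's buys is that the final comparison is between two honest geodesics, so only the geodesic version of stability is invoked. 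One small addition would make your write-up match the statement verbatim: for uniqueness as stated (unique coset within Hausdorff distance $C$, not merely unique coset with the same endpoint), note that any coset of $c_2$ at finite Hausdorff distance from $s_1c_1$ has the same limit points on $\partial S$, hence coincides with $s_2c_2$ by the injectivity you established; this is the counterpart of the paper's remark that distinct cosets of $c_2$ are at infinite Hausdorff distance.
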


\begin{proof}
Let $g_2$ be a left-invariant Riemannian metric on $S$ such that $c_2$ is normal to $N$ with respect to $g_2$, and denote $d_2$ the associated Riemannian distance.
For every $s_1 \in S$, the curve $s_1c_1$ is an $(L,C)$-quasi-geodesic in $(S, d_2)$ for some constants $L$, $C$ depending only on $g$ and $g_2$.  By the Morse Lemma,   there is  a  complete geodesic $\gamma$ in $(S, d_2)$ such that $\mathrm{Hausdist}_{d_2}(s_1c_1, \gamma)\le C_1$ for some constant $C_1$   depending only on $g$ and $g_2$.  
Since $s_1c_1$ intersects all the horospheres centered at the focal point, so does $\gamma$ {\color{black} (Indeed, $h(s_1 c_1)$ and $h(\gamma)$ are both intervals of $\mathbb R$ at bounded Hausdorff distance from each other, so if one of them is $\mathbb R$ then the other one as well}).
  {\color{black} We see that the limit points of $\gamma$
in  $\partial S$ are the focal point   and some $n\in N$.  On the other hand, there is a left coset $s_2c_2$  with the same limit points.  Since both $\gamma$ and $s_2c_2$ are geodesics in $(S, d_2)$,  their Hausdorff distance is bounded above by a constant $H$ depending only on $d_2$.    Hence 
      $\mathrm{Hausdist}_{d_2}(s_1c_1, s_2c_2)\le C_1 +H$ for some left coset $s_2c_2$ of $c_2$. 
      The lemma follows  as all the left-invariant Riemannian metrics are biLipschitz with respect to each other.   
      Since two different cosets $s_2 c_2$ and $s'_2 c_2$ have infinite Hausdorff distance, we have uniqueness.}
\end{proof}

\begin{proof}[Proof of Theorem~\ref{main-heintze} in the general case]  
 For each $i \in \{1,2 \}$, let $g_i$ be a left-invariant Riemannian metric on $S$ and $d_i$ the distance on $S$ determined by $g_i$.  We need to show that 
    the identity map 
 $(S, d_1)\rightarrow (S, d_2)$ is a rough similarity.  
  Let $c_i$ be a $g_i$-geodesic section of $\pi \colon S \to S/N$ with $c_i(+\infty)$ equal to the focal point for all $i$. The composition $h\circ c_i: \mathbb R\rightarrow \mathbb R$ is the identity map.  
 After rescaling the metric $g_i$ if necessary we may further assume that 
   $c_i$ is a unit  speed geodesic in $(S, d_i)$.
   We shall show that the identity map 
   $(S, d_1)\rightarrow (S, d_2)$ is a rough   isometry.  
By symmetry it suffices to show that there is a constant $C$ such that 
 $d_1(x,y)\le d_2(x,y)+C$ for every $x, y\in S$. 

  By Lemma~\ref{distance in GH}, there are points $x'\in xc_2$, $y'\in yc_2$   such that  $d_2(x', y')\le C'$ and 
      \begin{equation}\label{A0}
      |d_2(x,y)-(d_2(x,x')+d_2(y',y))|\le C',
      \end{equation}
         where $C'$ depends only on $d_2$.   
  Since $\text{Id}: (S, d_1)\rightarrow (S, d_2)$ is $L$-biLipschitz for some $L\ge 1$, 
    we have 
        \begin{equation}\label{A1}d_1(x',y')\le LC'.
        \end{equation}  
     By Lemma~\ref{1parameter} there are left cosets   $\alpha$, $\beta$ of $c_1$ such that $\mathrm{Hausdist}_{d_1}(\alpha, xc_2)\le C$, 
 $\mathrm{Hausdist}_{d_1}(\beta, yc_2)\le C$, where $C$ is a constant depending only on $d_1$, $d_2$. 
         
Considering the height function $h$, we take      $\tilde x$ and $\tilde{x}'$ to be points on  $\alpha$    satisfying $h(\tilde x)=h(x)$, $h(\tilde x')=h(x')$, see Figure~\ref{fig:ThmA}. 
      Similarly  
 let $\tilde y$ and $\tilde{y}'$ be points on  $\beta$   satisfying $h(\tilde y)=h(y)$, $h(\tilde y')=h(y')$.
    We claim that we have 
    \begin{equation}\label{A2}
    d_1(z, \tilde z)\le 2C,\qquad \text{ for }z\in \{x, x', y, y'\} \text{ and the respective } \tilde z.
    \end{equation}
      Indeed, the $d_1$ distance from $z$ to the appropriate left coset of $c_1$ is at most $C$, so that the height of the nearest-point projection of $z$ on this left coset differs at most $C$ from that of $\tilde z$.
 
        We have the bounds
  \begin{align*}
   d_1(x, y) & \le d_1(x, \tilde x)+d_1(\tilde x, \tilde {x}')
   +d_1(\tilde{x}', x') 
   +d_1(x', y') \\
  & \qquad +d_1(y', \tilde{y}')+d_1(\tilde{y}', \tilde y)+d_1(\tilde y, y)\\
  &\le 8C+LC'+d_1(\tilde x, \tilde {x}')+d_1(\tilde{y}', \tilde y)\\
  &=8C+LC'+d_2(x, {x'})+d_2({y'},  y)\\
  &\le 8C+LC'+C'+d_2(x, y),  
  \end{align*}
  where we used the following arguments:
  In the first line, we used the triangle inequality.
  In the second line, we used \eqref{A1} and \eqref{A2}.
  In the third line, we used that
  $d_1(\tilde x, \tilde {x}') = \vert h(\tilde x) - h( \tilde x')\vert = \vert h(x) - h(x') \vert = d_2(x,x')$ and similarly, $d_1(\tilde y, \tilde {y}') = d_2(y,y')$.
  In the fourth line, we used \eqref{A0}. 
  \end{proof} 
  
  \begin{figure}
  \begin{tikzpicture}[line cap=round,line join=round,>=triangle 45,x=0.7cm,y=0.7cm]
\clip(-5,-4.5) rectangle (5,4.5);
\draw(0,0) circle (4);
\draw [dash pattern=on 4pt off 4pt] (0,2.75) circle (1.25);
\draw [dash pattern=on 4pt off 4pt] (0,1.74) circle (2.26);
\draw [dash pattern=on 4pt off 4pt] (0,1.06) circle (2.94);
\draw [shift={(5.15,4)}] plot[domain=3.14:4.46,variable=\t]({1*5.15*cos(\t r)},{1*5.15*sin(\t r)});
\draw [shift={(-13.31,4)}] plot[domain=-0.58:0,variable=\t]({1*13.31*cos(\t r)+0*13.31*sin(\t r)},{1*13.31*sin(\t r)});
\draw [shift={(2.57,1.99)}] plot[domain=2.48:5.13,variable=\t]({1*3.26*cos(\t r)},{3.26*sin(\t r)});
\draw [shift={(4.79,-1.44)}] plot[domain=2.29:3.41,variable=\t]({1*7.25*cos(\t r)},{1*7.25*sin(\t r)});
\fill [color=black] (0,4) circle (1.5pt) node[anchor=south] {$\omega$};
\fill [color=black] (-1.24,-1.61) circle (1.5pt)node[anchor=north west] {$x$}; 
\fill [color=black] (1.67,0.21) circle (1.5pt) node[anchor=north] {$y$}; 
\fill [color=black] (-0.23,1.51) circle (1.5pt) node[anchor=north west] {$x'$}; ;
\fill [color=black] (0.58,1.63) circle (1.5pt) node[anchor=south] {$y'$}; 
\fill [color=black] (-0.68,1.69) circle (1.5pt)
node[anchor=north east] {$\widetilde{y'}$}; 
\fill [color=black] (-1.24,2.58) circle (1.5pt)node[anchor=east] {$\widetilde{x'}$}; 
\fill [color=black] (-2.41,-0.62) circle (1.5pt) node[anchor=east] {$\widetilde{x}$}; 
\fill [color=black] (0.45,-0.49) circle (1.5pt) node[anchor=north] {$\widetilde{y}$}; 

\draw (0,0) node[stuff_fill, anchor=center] {$\beta$};
\draw (-2.5,-2) node[stuff_fill, anchor=center] {$\alpha$};

\draw (1,0.8) node[stuff_fill, anchor=center] {$yc_2$};
\draw (-1.5,-2.5) node[stuff_fill, anchor=center] {$xc_2$};
\end{tikzpicture}
\caption{Main objects in the proof of Theorem~\ref{main-heintze} in the general case, in the hyperbolic disk model with a focal point $\omega$. From the point of view of $d_2$, $d_1$-geodesics appear in the form of hypercircles.}\label{fig:ThmA}
  \end{figure}
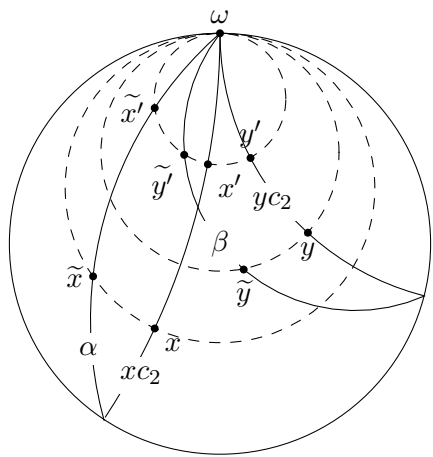

\section{Left-invariant Riemannian metrics on  Sol-type groups}
\label{sec:soltype}

In this section we show that every two left-invariant Riemannian metrics with associated distances $d_1$ and $d_2$ on a  Sol-type group   $G$    are roughly similar through
 the identity map, see Theorem~\ref{main-soltype}.  Notation here is as in \S~\ref{subsec:soltype-definition}, especially $G=(N_1\times N_2)\rtimes \mathbb R$, $S_1 = N_1 \rtimes \mathbb R$ and $S_2 = N_2 \rtimes \mathbb R$.

\begin{figure}
    \begin{center}
\begin {tikzpicture}[-latex ,auto ,node distance =1 cm and 3.5cm ,on grid ,
semithick , state/.style ={ rectangle , 
minimum width =1 cm}, state1/.style ={ rectangle ,
 draw,minimum width =1 cm}]
\node[state] (P) {\bf Lemma~\ref{projection}};
\node[state] (C) [below =of P] {\bf Lemma~\ref{claim}};
\node[state1] (Q) [below = of C] {\bf Theorem~\ref{soldistancetheorem}};
\node[state] (H) [below = of Q] {\bf Corollary~\ref{solpath}};
\node[state1] (T) [below left = of H] {{\bf Theorem~\ref{main-soltype}}, general case.};
\node[state] (H2) [left =of P] {\bf Lemma~\ref{path-outside}};
\node[state] (H1) [left =of H2] {\bf Lemma~\ref{comparison-horosphere}};
\node[state] (SO) [left=of Q] {\bf Lemma~\ref{Sol-1parameter}};
\node[state] (O) [below=of H1] {\bf Lemma~\ref{1parameter}};

\path (P) edge [bend left =0] node[left]{\S~\ref{subsec:43}} (C);
\path (C) edge [bend left =0]   node[right]{\S~\ref{subsec:43}} (Q) ;
\path (Q) edge [bend left =0]   node[right]{\S~\ref{another}} (H);
\path (H) edge (T) ;
\path (H1) edge  node[above]{\S~\ref{sec:preliminary}} (H2)  ;
\path (H2) edge (C)  ;
\path (O) edge  node[above]{\S~\ref{soldistance}} (SO);
\path (SO) edge  (T);

\draw (-3.2,-3.5) node {\S~\ref{soldistance}}; 

\end{tikzpicture}
\end{center}
    \caption{Scheme of the proof of Theorem~\ref{main-soltype}}
    \label{fig:proof-main-soltype}
\end{figure}
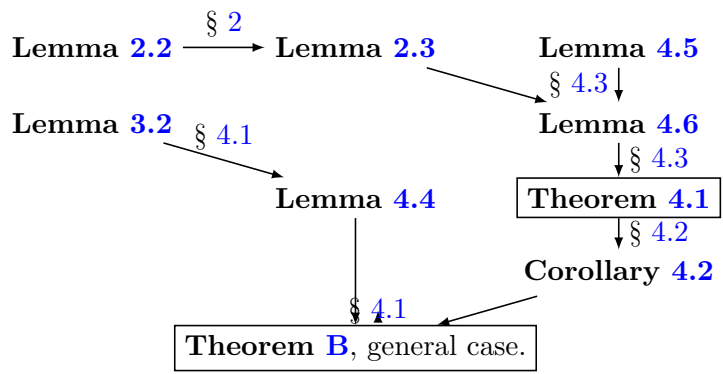
 
 The general strategy is the same as for the Heintze groups: we first establish the statement for those  pairs of Riemannian metrics for which $\mathfrak n_1 \times \mathfrak{n}_2$ have the same orthogonal complement in $\mathfrak g$, then the general case.   
To establish the special case we need to find an estimate for the distance function, see Theorem~\ref{soldistancetheorem}. 

\subsection{Distances on Sol-type groups  and the proof of Theorem~\ref{main-soltype}}\label{soldistance}

In this subsection we state a result giving an estimate of distance in Sol-type groups and use  it to prove Theorem~\ref{main-soltype}.   The estimate  itself will be established later.  


Let $g$ be a left-invariant  Riemannian metric on $G$. By Lemma~\ref{lem:perp-section} we choose a geodesic section for $G \to \mathbb R$, and then assume without loss of generality that as a set, $G=N_1\times N_2\times \mathbb R$, where $\mathbb R$ direction is perpendicular to both $N_1$ and $N_2$ with respect to $g$. 
   As in the case of Heintze groups, this  assumption implies that  for every $x\in N_1$, $y\in N_2$,  the curve $\gamma_{x,y}(t)=(x,y, t)$ ($t\in \mathbb R$) is a minimizing constant speed geodesic.   These will be called vertical geodesics.  

 We define several maps. The map $h: G\rightarrow \mathbb R$,  $h(x,y,t)=t$, will be called the height function  of $G$   and $t$ will be called the height of the point $(x,y,t)$. 
  The ``projections''  $\pi_i: G \rightarrow S_i$ are defined by $\pi_1(x,y,t)=(x,t)$, $\pi_2(x,y,t)=(y,t)$.  
We emphasize that the maps $\pi_i$ are not nearest point  projections.     However,   they are Lie group homomorphisms and so are Lipschitz  
   with 
     respect to left-invariant Riemannian  metrics, see Lemma~\ref{projection}.

  By  rescaling the metric $g$ we may assume that the vertical geodesics are unit speed geodesics.     Denote by $d$ the distance on $G$ determined by $g$.  We  identify $S_1$ with 
   $N_1\times \{0\}\times \mathbb R\subset G$ and $S_2$  with $\{0\}\times N_2\times \mathbb R\subset G$.  
 For $j=1,  2$, let $g^{(j)}$ be the Riemannian metric on $S_j$ induced by $g$ and $d^{(j)}$ the associated distance on $S_j$.   
 
 
Define a ``distance'' $\rho: G\times G\rightarrow [0, \infty)$ on $G$  by: 
 \begin{equation}\label{disformula}
 \rho(p,q)=d^{(1)}(\pi_1(p), \pi_1(q))+d^{(2)}(\pi_2(p), \pi_2(q))-|h(p)-h(q)|.
 \end{equation}
  It turns out   that the distance $d$ on $G$ differs from $\rho$  by a bounded constant:
  
  \begin{theorem}\label{soldistancetheorem}
    Let $G$, $d$ and $\rho$ be as above. Then there exists a constant $C>0$ such that
  $|d(p,q)-\rho(p,q)|\le C$ for all $p, q\in G$. 
   \end{theorem}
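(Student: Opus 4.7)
The plan is to prove the two inequalities $d(p,q)\le \rho(p,q)+C$ and $d(p,q)\ge \rho(p,q)-C$ separately, both building on the Gromov-hyperbolic structures of the Heintze groups $S_1$ and $S_2$ and on the fact that the projections $\pi_1,\pi_2$ will be shown to be Lipschitz (Lemma~\ref{projection}), together with the $1$-Lipschitz property of the height $h$ coming from our normalization $\mathbb R\perp N_1\oplus N_2$.

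For the upper bound I would construct an explicit path from $p=(x_1,x_2,t)$ to $q=(y_1,y_2,s)$ whose length matches $\rho(p,q)$ up to a universal constant. Applying Lemma~\ref{distance in GH} in $S_1$ (focal point $\xi_1$ at $+\infty$) to $\pi_1(p),\pi_1(q)$, I obtain a common height $h^+$ such that the vertical rays from $\pi_1(p),\pi_1(q)$ toward $\xi_1$ reach points at bounded $d^{(1)}$-distance at height $h^+$, whence $d^{(1)}(\pi_1(p),\pi_1(q))=2h^+-t-s+O(1)$. Symmetrically in $S_2$ (focal point at $-\infty$) there is a nadir height $h^-$ with $d^{(2)}(\pi_2(p),\pi_2(q))=t+s-2h^-+O(1)$. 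Lift everything to $G$: the ``trapezoid'' path going vertically from $p$ down to height $h^-$, across $N_2$ (bounded cost by definition of $h^-$), vertically up to height $h^+$, across $N_1$ (bounded cost by definition of $h^+$), vertically down to $q$ (with the orientation chosen depending on the sign of $s-t$) has length $2(h^+-h^-)-|h(p)-h(q)|+O(1)=\rho(p,q)+O(1)$, because the vertical segments are unit-speed $d$-geodesics and the short horizontal segments lie in left cosets of $N_i\subset G$, isometrically embedded into $G$ via the product structure of Definition~\ref{def:soltype}.

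The lower bound $d(p,q)\ge\rho(p,q)-C$ is the main obstacle. The strategy is: let $\gamma$ be any path from $p$ to $q$ of length $\ell(\gamma)\le \rho(p,q)+C$ (otherwise there is nothing to prove), and let $h_\gamma^+, h_\gamma^-$ be the extreme heights reached by $\gamma$. Since $h$ is $1$-Lipschitz, the total variation estimate gives
\[
\ell(\gamma)\ge 2(h_\gamma^+-h_\gamma^-)-|h(p)-h(q)|.
\]
It therefore suffices to show $h_\gamma^+\ge h^+-C'$ and $h_\gamma^-\le h^-+C'$ for a universal $C'$. This is where Lemma~\ref{path-outside} enters through the projections $\pi_i$: if, say, $h_\gamma^+<h^+-C'$, then $\pi_1(\gamma)$ is a path in $S_1$ connecting $\pi_1(p),\pi_1(q)$ while staying below the horoball based at $\xi_1$ at height $h^+-C'$. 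Lifting $\pi_1(p),\pi_1(q)$ vertically to this horosphere produces two points at $d^{(1)}$-distance $\asymp 2C'$, and Lemma~\ref{path-outside} forces the horoball-avoiding portion of $\pi_1(\gamma)$ to have length exponential in $C'/\delta_1$. Since $\pi_1$ is Lipschitz (Lemma~\ref{projection}), $\ell(\gamma)$ is also exponentially large in $C'$, contradicting $\ell(\gamma)\le\rho+C$ once $C'$ exceeds a constant depending only on $g$. This is presumably packaged as the intermediate Lemma~\ref{claim}.

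The subtle point --- and, I expect, the technical heart of Section~\ref{sec:soltype} --- is that a naive application of the exponential bound yields only $h_\gamma^+\ge h^+-O(\log\rho)$, hence $\ell(\gamma)\ge\rho-O(\log\rho)$, not $\rho-O(1)$. To close the gap to an additive constant one must run the exponential-penalty argument from Lemma~\ref{path-outside} jointly on $S_1$ and $S_2$ and balance the logarithmic loss in vertical variation against the exponential gain in horizontal $N_1$- or $N_2$-arclength at non-optimal heights. I expect Lemma~\ref{claim} to isolate precisely this balance, after which the $1$-Lipschitzness of $h$ delivers $\ell(\gamma)\ge \rho(p,q)-C$ and completes the proof.
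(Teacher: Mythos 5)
Your overall architecture coincides with the paper's: the upper bound via an explicit ``trapezoid'' path is essentially the paper's path $\alpha_0$ built from quasi-geodesics in cosets of $S_1$ and $S_2$, and your reduction of the lower bound to the statement that any (near-)minimizing path must climb to within a bounded distance of the optimal top height and descend to within a bounded distance of the optimal bottom height is precisely the paper's Lemma~\ref{claim} (with $D_+=t_{x_1,y_1}-h_+$ and $D_-=h_--t_{x_2,y_2}$), proved there using the Lipschitz projections of Lemma~\ref{projection} together with the horoball-avoidance estimates of Lemmas~\ref{comparison-horosphere} and~\ref{path-outside}, exactly the tools you invoke. But the proof of that height claim is the technical heart of the theorem, and your proposal does not supply it: you correctly note that pitting the exponential penalty against the total budget $\rho+C$ only yields a height deficit $O(\log\rho)$, and then defer the repair to an ``expected'' lemma described in one sentence.

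That sentence does not contain the idea that actually closes the gap. The resolution is not to run the penalty argument ``jointly on $S_1$ and $S_2$'' (the paper takes WLOG $D_+\ge D_-$ and works with $\pi_1$ alone), nor to balance against a ``logarithmic loss'': one must pit the exponential penalty against the \emph{linear-in-$D_+$} savings. Concretely, the height/total-variation bound already gives $\ell(\beta)\ge \ell_0-O(1)-4D_+$, so it suffices to show that, \emph{in addition and on an essentially disjoint portion of $\beta$}, there is an excursion of length at least exponential in $D_+$; then $\ell_0\ge\ell(\beta)\ge \ell_0-O(D_+)+2^{cD_+}-O(D_+)$ forces $D_+=O(1)$. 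The genuine difficulty, which your sketch does not address, is the double counting: the exponential excursion itself carries height variation, so one cannot simply add the two lower bounds. This is exactly what part (2) of Lemma~\ref{path-outside} is designed for: it localizes an exponentially long excursion inside a sub-portion whose height change is linearly controlled by the relevant $S_1$-distance, so that the final comparison is exponential versus linear in one and the same quantity (which is at least $D_+/5$), and the remainder of the curve still accounts for nearly all the vertical travel. One also needs the triangle-inequality trichotomy deciding which pair among the projections of $p$, $q$ and the entry/exit points of $\beta$ at height $h_+-D_+$ is at $S_1$-distance comparable to $D_+$, the insertion of vertical segments so that the resulting curve lies below a suitable horosphere before Lemmas~\ref{comparison-horosphere}/\ref{path-outside} apply, and a case analysis on the position of $h_+-D_+$ relative to $h(p)$ and $h(q)$ (Cases I--III in the paper). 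Without these steps the lower bound $d\ge\rho-C$ is not established, so as written the proposal has a genuine gap at its decisive point.
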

  
   As a consequence of the proof of Theorem~\ref{soldistancetheorem} we have

  \begin{corollary}\label{solpath}
     Let $G$ be a Sol-type group, $g$ a left-invariant Riemannian metric and $d$ the associated distance.  Then there is a constant $C>0$ with the following property. 
         Denote by $c$ the one-parameter subgroup of $G$ that is perpendicular to 
    $N$ at $e$   with respect to $g$. 
    For every   $x,y\in G$  with $h(x)\le h(y)$,    there exist   three left cosets $\beta_j$ ($j=1, 2,   3$)   of $c$   with $x\in \beta_1$, $y\in \beta_3$
     and points $x_1\in \beta_1$, $z_1, z_2\in \beta_2$ and $y_2\in \beta_3$ satisfying:
\begin{enumerate}
\item 
$|d(x,y)-(d(x, x_1)+d(z_1, z_2)+d(y_2,y))|\le C$;
\item 
 $d(x_1, z_1)\le C$, $d(z_2, y_2)\le C$;
\item 
 $h(x_1)=h(z_1)\le h(x)$, $h(z_2)=h(y_2)\ge h(y)$.  
\end{enumerate}     
  \end{corollary}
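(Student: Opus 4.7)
The plan is to exhibit the five-segment path described by the statement as the explicit near-geodesic used to drive the upper-bound half $d(x,y) \le \rho(x,y)+O(1)$ of Theorem~\ref{soldistancetheorem}: two vertical segments along the cosets $\beta_1=xc$ and $\beta_3 = yc$, two short horizontal jumps at well-chosen heights, and a long intermediate vertical segment along a third coset $\beta_2$. The heights of the two jumps are supplied by applying Lemma~\ref{distance in GH} in each of the two Heintze slices.

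In the Heintze group $S_2 = N_2 \rtimes \mathbb{R}$, the derivation $-\lambda D_2$ has negative eigenvalues with respect to the direction in which the height $h$ grows, so the focal point of $S_2$ corresponds to $h \to -\infty$. Applying Lemma~\ref{distance in GH} to the pair $\pi_2(x) = (n_2(x), h(x))$ and $\pi_2(y) = (n_2(y), h(y))$ produces an altitude $t_- \le h(x)$ and points $\tilde x_2 = (n_2(x), t_-)$ and $\tilde y_2 = (n_2(y), t_-)$ on a common horosphere of $S_2$, with $d^{(2)}(\tilde x_2, \tilde y_2) \le C$ and
\[
d^{(2)}(\pi_2 x,\pi_2 y) = (h(x)-t_-) + (h(y)-t_-) + O(1).
\]
Applying Lemma~\ref{distance in GH} symmetrically in $S_1$ (focal point at $h \to +\infty$) yields $t_+ \ge h(y)$ together with $\tilde x_1 = (n_1(x), t_+)$ and $\tilde y_1 = (n_1(y), t_+)$ satisfying $d^{(1)}(\tilde x_1, \tilde y_1) \le C$ and $d^{(1)}(\pi_1 x, \pi_1 y) = (t_+-h(x)) + (t_+-h(y)) + O(1)$. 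Let $\beta_2$ be the left coset of $c$ through the point with $N_1$-coordinate $n_1(x)$ and $N_2$-coordinate $n_2(y)$; let $x_1, z_1$ be the points at height $t_-$ of $\beta_1$ and $\beta_2$, and $z_2, y_2$ the points at height $t_+$ of $\beta_2$ and $\beta_3$.

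Conclusion (3) is immediate from this construction. For (2), observe that $x_1$ and $z_1$ share the $N_1$-coordinate $n_1(x)$, so left-translating by $(n_1(x)^{-1}, e, 0)$ sends them into the subgroup $\{e\} \times N_2 \times \mathbb{R} \simeq S_2 \subset G$, on which the induced Riemannian metric coincides with $g^{(2)}$ thanks to the orthogonality $c \perp N$ guaranteed by Lemma~\ref{lem:perp-section}; hence $d(x_1, z_1) = d^{(2)}(\tilde x_2, \tilde y_2) \le C$. The bound $d(z_2, y_2) \le C$ is symmetric via the subgroup $S_1 \subset G$. For (1), each vertical segment has length equal to its height change because $c$ is a unit-speed geodesic of $d$; summing them gives $(h(x)-t_-)+(t_+-t_-)+(t_+-h(y)) = 2(t_+-t_-) - (h(y)-h(x))$. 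The two distance-formula calculations above sum to $d^{(1)}(\pi_1 x, \pi_1 y)+d^{(2)}(\pi_2 x, \pi_2 y) = 2(t_+-t_-) + O(1)$, so Theorem~\ref{soldistancetheorem} delivers $d(x,y) = 2(t_+-t_-) - (h(y)-h(x)) + O(1)$, matching the path length up to an additive constant. The main obstacle is the verification in (2) that each slice $S_i$ embeds isometrically into $G$, for which the orthogonal-section choice of $c$ from Lemma~\ref{lem:perp-section} is essential; without it, the natural lift of a bounded $S_i$-path to $G$ need not remain bounded.
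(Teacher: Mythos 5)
Your proposal is correct, and at its core it follows the same strategy as the paper: build the explicit five-segment path along three cosets of $c$ (through $x$, through the ``mixed'' point with coordinates $(n_1(x),n_2(y))$, and through $y$), bound the two horizontal jumps by a uniform constant, and compare the total length with $d(x,y)$ via Theorem~\ref{soldistancetheorem}, after reducing to the adapted decomposition in which $c$ is the $\mathbb R$-direction perpendicular to $N_1$ and $N_2$. The one genuine difference is where the two jump heights come from: the paper takes them to be the heights $t_{x_2,y_2}$ and $t_{x_1,y_1}$ at which the horospherical distances equal $1$, and then compares with the length $\ell_0$ of the path $\alpha_0$ and with $\tilde\rho$ from \S~\ref{another}, whereas you obtain them by applying Lemma~\ref{distance in GH} to $\pi_2(x),\pi_2(y)$ in $(S_2,d^{(2)})$ and to $\pi_1(x),\pi_1(y)$ in $(S_1,d^{(1)})$, choosing the vertical rays as the geodesics to the focal points, and then plug the resulting estimates directly into the formula \eqref{disformula} for $\rho$. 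The two choices of heights agree up to a bounded amount, so the arguments are equivalent; your version has the small advantage that $t_-\le h(x)$ and $t_+\ge h(y)$ (condition (3)) hold automatically, without the implicit case distinction the paper's choice requires when, say, $t_{x_2,y_2}>h(x)$. Two minor points to fix: in (2) you should write $d(x_1,z_1)\le d^{(2)}(\tilde x_2,\tilde y_2)$ rather than an equality, since the ambient distance on $G$ is only dominated by the induced path metric on the slice (this is all you need); and you should state explicitly that, as in \S~\ref{soldistance}, you rescale $g$ so that $c$ is a unit-speed geodesic with $h(c(t))=t$, which is harmless for the statement and is what makes ``length of a vertical segment $=$ height change'' and the hypotheses of Theorem~\ref{soldistancetheorem} available.
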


          \begin{center}
          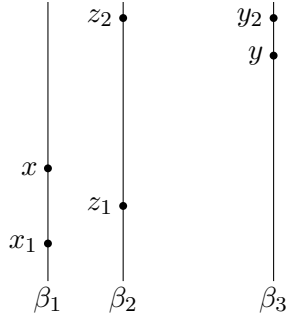
\begin{figure}
          \begin{tikzpicture}[x=1cm,y=0.5cm]
          
          \clip (-2,-4) rectangle (5,4.4);
          
          \fill (-1,0) circle(1.5pt) node[left] {$x$};
          \fill (-1,-2) circle(1.5pt) node[left] {$x_1$};
          \fill (0,-1) circle(1.5pt) node[left] {$z_1$};
          \fill (0,4) circle(1.5pt) node[left] {$z_2$};
          \fill (2,4) circle(1.5pt) node[left] {$y_2$};
          \fill (2,3) circle(1.5pt) node[left] {$y$};
          
          \draw (-1,-3) -- (-1,5);
          \draw (0,-3) -- (0,5);
          \draw (2,-3) -- (2,5);
          
          \draw (-1,-3.5) node[anchor=center] {$\beta_1$};
          \draw (0,-3.5) node[anchor=center] {$\beta_2$};
          \draw (2,-3.5) node[anchor=center] {$\beta_3$};
          
          \end{tikzpicture}
          \caption{The left cosets of $c$ in Corollary~\ref{solpath}.}
          \end{figure}

          \end{center}

  \begin{remark}  Tom Ferragut has a result similar to Theorem~\ref{soldistancetheorem}, see Corollary 4.17 of \cite{F}.  These two results have overlap but do not imply each other.  The result in \cite{F} is for horospherical products $X\bowtie Y$  of Gromov Busemann spaces $X$, $Y$.   
  On one hand, horospherical products are more general than Sol-type groups. 
  On the other hand,   
   the factors $X$ and $Y$ in a horospherical product are ``perpendicular'' in some sense, 
   while  $N_1$ and $N_2$ in a Sol-type group  is not assumed to be perpendicular to each other  with respect to the metric $g$ (without loss of generality the direction of the $\mathbb R$ factor is perpendicular to both $N_1$ and $N_2$, but we do not assume that $N_1$ and $N_2$ are perpendicular).

  \end{remark}
  
  \begin{proof}[Proof of Theorem~\ref{main-soltype} assuming Theorem~\ref{soldistancetheorem} in the case of equal vertical gedesics]   
  
 One may decompose $G$ as a product, $G = N_1 \times N_2 \times \mathbb R$, in such a way that the direction of the
  $\mathbb R$ factor is perpendicular to both $N_1$ and $N_2$ with respect to  $g_1$ and $g_2$.

  After rescaling we may further assume that     vertical geodesics have  unit speed with respect to both $g_1$ and $g_2$.  We shall show that the identity map
     $\text{Id}: (G,  d_1)\rightarrow (G, d_2)$ is a rough isometry.

   Let $g^{(j)}_i$ be the Riemannian metric on $S_j$  induced by $g_i$, and let $d^{(j)}_i$ be the associated distance on $S_j$.  Also let $\rho_i$ be the ``distance'' (see  (\ref{disformula}))  on $G$  corresponding to $g_i$.    By Theorem 
  ~\ref{soldistancetheorem}  there is a constant $C>0$ such that 
    $|d_i(p, q)-\rho_i(p,q)|\le  C$  for   all  $p, q\in G$.  
    On the other hand,   since the vertical geodesics have unit speed in $(S, g_i)$,     Theorem~\ref{main-heintze}  implies 
   $|d^{(j)}_1(\pi_j(p), \pi_j(q))-d^{(j)}_2(\pi_j(p), \pi_j(q))|\le C'$ for some constant $C'\ge 0$ and all $p, q\in G$.   It follows from the definition of $\rho_i$ that 
     $|\rho_1(p,q)-\rho_2(p, q)|\le 2C'$    and so 
      $|d_1(p, q)-d_2(p, q)|\le 2C+2C'$ for all $p, q\in G$. \qedhere
      
      \end{proof}

      For the general case, we need an analogue of 
        Lemma~\ref{1parameter} for Sol-type groups.

      \begin{lemma}\label{Sol-1parameter}
      Let $c, \tilde c: \mathbb R\rightarrow G=(N_1\times N_2)\rtimes \mathbb R$ be  one-parameter subgroups of $G$ not contained in $(N_1\times N_2)\times \{0\}\subset G$. 
       Let $g$ be any left-invariant Riemannian metric on $G$.
          Then there is a constant $C$ depending only on $g$ and $c, \tilde c$ with the following property.  For any left coset $pc$ of $c$, there is a unique  left coset  $q\tilde c$ of $\tilde c$ 
            such that 
          $\mathrm{Hausdist}_d(pc, q\tilde c)\le C$.
      
      \end{lemma}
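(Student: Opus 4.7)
The plan is to reduce to Lemma~\ref{1parameter} via the projections $\pi_i\colon G\to S_i$ and then reassemble the two resulting Heintze cosets into a single coset in $G$, using the distance formula of Theorem~\ref{soldistancetheorem} to control the Hausdorff distance.

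First I would normalize so that $c$ and $\tilde c$ are parametrized by height, which is possible since neither is contained in $N_1\times N_2$. The projections $\pi_1$ and $\pi_2$ are Lie group homomorphisms onto the Heintze factors $S_1,S_2$, hence Lipschitz with respect to any left-invariant Riemannian metrics on domain and target by Lemma~\ref{projection}. They send $c$ and $\tilde c$ to height-parametrized one-parameter subgroups of $S_i$ not contained in $N_i$, so Lemma~\ref{1parameter} applies in each factor and yields a unique left coset $q^{(i)} \pi_i(\tilde c)$ at bounded Hausdorff distance from $\pi_i(pc)$.

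The assembling step exploits the fact that every height-parametrized one-parameter subgroup of $S_i$ meets each horosphere exactly once. Pick any $h_0\in \mathbb R$, let $(n^{(i)}, h_0)$ be the unique element of $q^{(i)}\pi_i(\tilde c)$ at height $h_0$, and set $q := (n^{(1)}, n^{(2)}, h_0) \in G$; then by construction $\pi_i(q\tilde c) = q^{(i)} \pi_i(\tilde c)$ for $i=1,2$. To bound the Hausdorff distance between $pc$ and $q\tilde c$, for each $x\in pc$ at height $h$ let $y\in q\tilde c$ be the unique point at the same height. Since $d^{(i)}$ dominates the height distance in $S_i$ and $\pi_i(\tilde c)$ is Lipschitz in its height parameter (with constant depending only on $g$ and $\tilde c$), the Hausdorff bound from Lemma~\ref{1parameter} in $S_i$ upgrades to a uniform bound on $d^{(i)}(\pi_i(x), \pi_i(y))$ for same-height points. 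Because $h(x)=h(y)$, the expression in \eqref{disformula} gives $\rho(x,y)\leqslant C'$, and Theorem~\ref{soldistancetheorem} yields $d(x,y)\leqslant C'+C$. A symmetric argument starting from $y\in q\tilde c$ completes the Hausdorff estimate.

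Uniqueness is inherited from Lemma~\ref{1parameter}: if $q\tilde c$ and $q'\tilde c$ were both at bounded Hausdorff distance from $pc$, then applying the Lipschitz projections $\pi_i$ would give $\pi_i(q\tilde c)$ and $\pi_i(q'\tilde c)$ at bounded Hausdorff distance in each $S_i$, and the uniqueness clause of Lemma~\ref{1parameter} would force $\pi_i(q)\in q^{(i)}\pi_i(\tilde c)$ for both $i$; combined with height-parametrization, this forces $q\tilde c = q'\tilde c$. I expect the main subtlety to be the same-height matching in the middle step, which promotes the Hausdorff bound of Lemma~\ref{1parameter} into a pointwise bound between height-aligned points; everything else is bookkeeping around Theorem~\ref{soldistancetheorem}.
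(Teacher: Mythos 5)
Your proposal is correct, and its first half --- height-normalizing $c,\tilde c$, projecting by the homomorphisms $\pi_1,\pi_2$ (Lipschitz by Lemma~\ref{projection}), applying Lemma~\ref{1parameter} in each Heintze factor, and upgrading the Hausdorff bounds to pointwise bounds between height-matched points --- is exactly what the paper does; your uniqueness argument via the projections is also sound (the paper leaves uniqueness essentially implicit). Where you diverge is the reassembly step: the paper never invokes Theorem~\ref{soldistancetheorem} here. Instead, with $q=(n_1,n_2,0)$ it estimates $d(c(t),q\tilde c(t))$ directly by a triangle inequality that changes the $N_1$- and $N_2$-coordinates one at a time, using left-invariance of $d$ and the fact that $N_1$ and $N_2$ commute to reduce each move to a distance inside a left translate of $S_1$ or $S_2$, which is then bounded by the induced distance $d_i$ there (via $d\le d_i$ on $S_i$); this keeps Lemma~\ref{Sol-1parameter} dependent only on Lemma~\ref{1parameter}, matching the paper's proof scheme. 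Your route through $\rho$ and Theorem~\ref{soldistancetheorem} is not circular (that theorem is proved from Lemmas~\ref{projection} and~\ref{claim} only), but it is a heavier import and carries one caveat you should make explicit: Theorem~\ref{soldistancetheorem} and formula \eqref{disformula} are stated under the normalization that the $\mathbb R$-direction is $g$-perpendicular to $N_1$ and $N_2$ with unit-speed vertical geodesics, and with $d^{(j)}$ the metrics induced on the slices $N_1\times\{0\}\times\mathbb R$ and $\{0\}\times N_2\times\mathbb R$. So either prove the lemma first for such a normalized metric and transfer the Hausdorff bound to an arbitrary $g$ using that all left-invariant Riemannian metrics are mutually biLipschitz (exactly as the paper does at the end of Lemma~\ref{1parameter}), or set up coordinates adapted to $g$; with that sentence added your argument is complete. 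What the paper's computation buys is an elementary, self-contained proof; what yours buys is a shorter verification once Theorem~\ref{soldistancetheorem} is available.
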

      
      \begin{proof}   
      Since $c, \tilde c$ are not contained in $(N_1\times N_2)\times \{0\}$,   the compositions $h\circ c$ and $h\circ \tilde c$ are automorphisms of $\mathbb R$.  
        By  composing $c$, $\tilde c$  with  suitable  automorphisms of  $\mathbb R$  we may assume $h\circ c(t)=t$ and $h\circ \tilde c(t)=t$ for $t\in \mathbb R$.  
        The one-parameter subgroups $c$ and $\tilde c$ now have the expressions:
       $c(t)=(a_1(t), a_2(t),  t)$, $\tilde c(t)=(\tilde a_1(t), \tilde a_2(t), t)$ for some functions $a_1, \tilde a_1: \mathbb R \to N_1$, $a_2, \tilde a_2: \mathbb R \to N_2$.

        Since the distance $d$ is left invariant, we may assume $p=e$ and so $pc=c$.  
         As  the projections $\pi_1$ and $\pi_2$ are group homomorphisms, the compositions $\pi_i\circ c$, $\pi_i\circ \tilde c$ are one-parameter subgroups of $S_i$ that are not contained in $N_i\times \{0\}\subset S_i$.    
         By Lemma~\ref{1parameter}, there exist   constants $C_1, C_2>0$ depending only on $c$, $\tilde c$ and $g$,  $(n_1, 0)\in S_1$ and $(n_2, 0)\in S_2$ such that $\mathrm{Hausdist}_{d_1}(\pi_1(c), (n_1,0)\pi_1(\tilde c))\le C_1$ and 
         $\mathrm{Hausdist}_{d_2}(\pi_2(c), (n_2,0)\pi_2(\tilde c))\le C_2$, where $d_{i}$ denotes the distance on $S_i$   induced by the restriction of $g$ to $S_i$. 
           Here we identify $S_1$ with $N_1\times\{0\}\times \mathbb R\subset G$ and similarly $S_2$ with $\{0\}\times N_2\times \mathbb R\subset G$.  
         Clearly we have $d(x,y)\le d_i(x,y)$  for any $x,y\in S_i$.   It follows that  $d_1((a_1(t),t), (n_1\tilde a_1(t),t))\le 2C_1$ and 
          $d_2((a_2(t),t), (n_2 \tilde a_2(t),t))\le 2C_2$ for all $t\in \mathbb R$.    Set $q=(n_1, n_2, 0)\in G$. 
         We next show that $d(c(t), q\tilde c(t))\le  2C_1+2C_2 $  for all $t\in \mathbb R$   and so   $\mathrm{Hausdist}_d(c, q\tilde c)\le 2C_1+2C_2$. 
 \begin{align*}
 &d(c(t), q\tilde c(t))\\
 &= d((a_1(t), a_2(t), t), (n_1, n_2, 0)(\tilde a_1(t), \tilde a_2(t),t))\\
 &\le d((a_1(t), a_2(t), t),  (n_1\tilde a_1(t), a_2(t), t))+d( (n_1\tilde a_1(t), a_2(t), t),   (n_1, n_2, 0)(\tilde a_1(t), \tilde a_2(t),t))\\
 &= d((0, a_2(t),0)(a_1(t), 0,t),  (0, a_2(t),0)(n_1\tilde a_1(t),0,  t))+\\
&+ d((n_1\tilde a_1(t), 0,0)(0, a_2(t), t),  (n_1\tilde a_1(t), 0,0)  (0, n_2\tilde a_2(t),  t))\\
 &=d((a_1(t), 0,t),  (n_1\tilde a_1(t),0,  t))+ d((0, a_2(t), t), (0, n_2\tilde a_2(t),  t)) \\
& \le d_1(a_1(t), t), (n_1\tilde a_1(t), t))+d_2((a_2(t), t), (n_2\tilde a_2(t), t))\\
 &\le 2C_1+2C_2.
 \end{align*}

      \end{proof}

      \begin{proof}[Proof of Theorem~\ref{main-soltype}  in the general case]

 Let $g, \tilde g$ be  left-invariant Riemannian metrics on $G$ and $d, \tilde d$ the distances on $G$ determined by $g$, $\tilde g$  respectively.  We need to show that 
    the identity map 
 $(G, d)\rightarrow (G,   \tilde d)$ is a rough similarity.  

  Let $c$    be  the one-parameter subgroup of $G$ whose tangent vector at $e$ is $g$-perpendicular to $(N_1\times N_2)\times \{0\}$  and $h(c(t)) = t$.
   After rescaling the metric   $g$  if necessary we may assume that 
     $c$ is a  unit speed geodesic with respect to $g$.  This normalization implies  that if $\beta$ is a left coset of $c$ and $p_1, p_2\in \beta$, then $d(p_1, p_2)=|h(p_1)-h(p_2)|$.  
       Similarly let $\tilde c$ be the normalized one-parameter subgroup of $G$ corresponding to $\tilde g$ such that $h(\tilde c (t)) = t$.   
       We observe that,   
           if $\beta$ is a left coset of $c$ and $\tilde \beta$ is a left coset of $\tilde c$, and $p_1, p_2\in \beta$, $\tilde p_1, \tilde p_2\in \tilde \alpha$ with $h(\tilde p_1)=h(p_1)$, $h(\tilde p_2)=h(p_2)$, then  
     $d(p_1, p_2)=|h(p_1)-h(p_2)|=\tilde d(\tilde p_1, \tilde p_2)$.   
    We shall show that the identity map 
   $(G, d)\rightarrow (G, \tilde d)$ is a rough   isometry.

By symmetry it suffices to show that there is a constant $C$ such that 
 $d(p,q)\le \tilde d(p,q)+C$ for every $p, q\in G$.  
 Let $p,q\in G$. We may assume $h(p)\le h(q)$.   By  Corollary~\ref{solpath}, there are three left cosets  $\tilde\beta_i$ ($i=1, 2,3$) of $\tilde c$ with $p\in\tilde  \beta_1$, $q\in \tilde\beta_3$,
   points $\tilde p_1\in \tilde\beta_1$, $\tilde r_1, \tilde r_2\in \tilde \beta_2$,  $\tilde q_2\in \tilde \beta_3$  satisfying the following (setting $\tilde p_2:=p$, $\tilde q_1:=q$):   $h(\tilde p_1)=h(\tilde r_1)\le  h(\tilde p_2)$,  $h(\tilde r_2)=h(\tilde q_2)\ge h(\tilde q_1)$, 
    $\tilde d(\tilde p_1, \tilde r_1)<\tilde C$, $\tilde d(\tilde r_2, \tilde q_2)<\tilde C$, and $|\tilde d(p, q)-(\tilde d(p, \tilde p_1)+\tilde d(\tilde r_1, \tilde r_2)+\tilde d(\tilde q_2, q))|\le \tilde C$, where $\tilde C$ is a constant   depending only on $\tilde d$.   By Lemma~\ref{Sol-1parameter},  there are left cosets  $\beta_i$ ($i=1,2,3$) of $c$,  such that for every $x\in \beta_i$, $y\in \tilde\beta_i$ with  the  same height (that is, $h(x)=h(y)$)  we have $d(x,y)\le C$, where $C$ is a constant depending only on $d$ and $\tilde c$.     Let 
     $p_j\in \beta_1$, $r_j\in \beta_2$, $q_j\in \beta_3$  ($j=1,2$)  satisfying $h(p_j)=h(\tilde p_j)$, $h(r_j)=h(\tilde r_j)$, 
       $h(q_j)=h(\tilde q_j)$.     Since $d$ and $\tilde d$ are biLipschitz through the identity,  there is a constant $C_1$ depending only on $d$, $\tilde d$ such that 
        $d(\tilde p_1, \tilde r_1), d(\tilde r_2, \tilde q_2)\le C_1$.    Now we have
        \begin{align*}
       d(p,q) & =d(\tilde p_2, \tilde q_1) \\
       & \le d(\tilde p_2,  p_2)+d(p_2, p_1)+d(p_1, \tilde p_1)+d(\tilde p_1, \tilde r_1)+d(\tilde r_1, r_1) \\
       & \quad +d(r_1, r_2)+d(r_2, \tilde r_2)+
        d(\tilde r_2, \tilde q_2)+d(\tilde q_2, q_2)+d(q_2, q_1)+d(q_1, \tilde q_1)\\
        & \le d(p_2, p_1)+d(r_1, r_2)+d(q_2, q_1)+ 6C+2C_1\\
        &= \tilde d(\tilde p_2, \tilde p_1)+\tilde d(\tilde r_1, \tilde r_2)+\tilde d(\tilde q_2, \tilde q_1)+6C+2C_1\\
       & \le \tilde d(\tilde p_2, \tilde q_1)+\tilde C+6C+2C_1\\
        &=\tilde d(p,  q)+\tilde C+6C+2C_1.  \qedhere
        \end{align*}
 \end{proof}

\subsection{Another expression for $\rho$}\label{another}


We next start the proof of Theorem~\ref{soldistance} and Corollary~\ref{solpath}.   
    Up to an additive constant, $\rho$ admits another expression which is  more convenient for our purpose. 
We first fix some notation.  

Let $G$, $g$, $d$,  $d^{(j)}$   and $\rho$ be as in  
 Subsection~\ref{soldistance}.   We recall that $g$ is  a left-invariant  Riemannian metric on $G$ such that   the 
 $\mathbb R$ direction is perpendicular to both $N_1$ and $N_2$ with respect to $g$,   the vertical geodesics $\gamma_{x_1, x_2}$ ($x_1\in N_1, x_2\in N_2$) are unit speed minimizing geodesics,  and  
      the minimal distance between the two ``horizontal sets'' $N_1\times N_2\times \{t_1\}$  and  $N_1\times N_2\times \{t_2\}$  is $|t_1-t_2|$.


Since $S_1$, $S_2$ are Gromov-hyperbolic, there is some constant $\delta>0$ such that both $(S_1, d^{(1)})$ and $(S_2, d^{(2)})$  are $\delta$-hyperbolic.

For each $t\in \mathbb R$, let $d^{(1)}_t$ be the path metric on the set $N_1\times \{t\}\subset (S_1, d^{(1)})$.  By the geometry of Heintze groups, we know that for fixed $x_1, y_1\in N_1$,   the quantity 
$d^{(1)}_t(\gamma_{x_1}(t), 
\gamma_{y_1}(t))$ decreases exponentially as $t\rightarrow +\infty$.  
Let $t_{x_1, y_1}\in \mathbb R$ be such that  
$d^{(1)}_{t_{x_1, y_1}}(\gamma_{x_1}(t_{x_1, y_1}), 
\gamma_{y_1}(t_{x_1, y_1}))=1$.   
Define  a function $\tilde\rho_1: S_1\times S_1\rightarrow [0, +\infty)$ by:
   \begin{equation*}
   \tilde\rho_1((x_1,t), (y_1, s))=\left\{
   \begin{array}{rl}
   |t-s|+1&  \;\text{if}  \;  t_{x_1, y_1}\le \max\{t, s\}\\
   (t_{x_1, y_1}-t)+(t_{x_1, y_1}-s)+1 &  \; \text{if}\; 
     t_{x_1, y_1}> \max\{t, s\}.
     \end{array}\right.
   \end{equation*}

\begin{center}
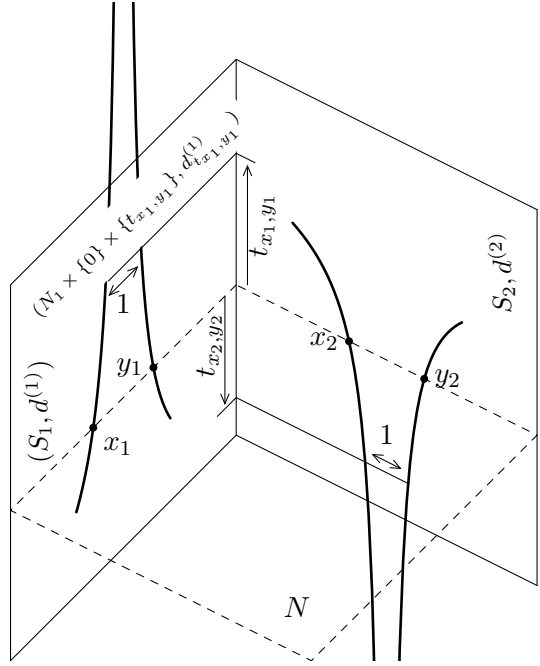
\begin{figure}
\begin{tikzpicture}[line cap=round,line join=round,>=angle 45,x=0.5cm,y=0.5cm]
\clip(-9,-10) rectangle (10,7.5);
\draw (-6,0)-- (-6,-10);
\draw (-6,0)-- (0,6);
\draw (0,6)-- (8,2);
\draw (-6,-10)-- (0,-4);
\draw (8,-8)-- (0,-4);
\draw (8,2)-- (8,-8);
\draw (0,6)-- (0,-4);
\draw (-5.91,-5) node[anchor=north west, rotate =90] {\small $ (S_1, d^{(1)}) $};
\draw (6.42,-1.03) node[anchor=north west, rotate = 90] {\small $ S_2, d^{(2)} $};
\draw (1,-8.03) node[anchor=north west] {$ N $};
\draw [color=black, line width=1pt,domain=-4.25:-1.75, samples = 50] plot(\x,{4/abs((\x+3))+\x-5});
\draw [color=black, line width=1pt,domain=1.5:6, samples = 50] plot(\x,{-4/abs((\x-4))-\x/2 +4});
\draw [dash pattern=on 3pt off 3pt] (0,0)-- (-6,-6)-- (2,-10)-- (8,-4) -- cycle;

\fill (-3.8,-3.8) circle(1.5pt) node[anchor=north west]{$x_1$};
\fill (-2.2,-2.2) circle(1.5pt) node[anchor=east]{$y_1$};

\draw (-5.5, -1) node[stuff_fill, anchor=west, rotate = 45] {\tiny $(N_1 \times \{ 0 \} \times \{ t_{x_1,y_1} \} , d^{(1)}_{t_{x_1,y_1}})$};
\draw (-3.4,0.1) -- (0,3.5) --(0.5,3.25);
\draw [<->] (-3.4,-0.3) -- (-2.6,0.5) ;
\draw (-3,-0.5) node[anchor=center]{$1$};
\draw [->] (0.3,0) -- (0.3,3.2);
\draw (0.7,1.6)  node[anchor=center, rotate = 90]{$t_{x_1,y_1}$};

\draw (-0.5,-3.5) -- (0,-3) -- (4.6,-5.3);

\draw [<->] (3.6,-4.5) -- (4.4,-4.9);

\fill (3,-1.5) circle(1.5pt) node[anchor=east]{$x_2$};
\fill (5,-2.5) circle(1.5pt) node[anchor=west]{$y_2$};

\draw (4,-4) node[anchor=center] {$1$} ;
\draw [->] (-0.3,-0.3) -- (-0.3,-3.2);
\draw (-0.7,-1.6)  node[anchor=center, rotate = 90]{$t_{x_2,y_2}$};

\end{tikzpicture}
\caption{``Metric view'' of the vertical geodesics and definition of $t_{x_1,y_1}$ in a Sol-type group. Beware that this is not a coordinate view and we equip left cosets of subgroups with path (Riemannian) metrics.}
\label{fig:soltype-metricview-verticals}
\end{figure}
\end{center}

   The quantity  $\tilde\rho_1((x_1,t), (y_1, s))$ is  (roughly) the length of a path  in $S_1$ between $(x_1, t)$ and $(y_1, s)$.    To simplify notation, denote $p=(x_1, t)$, $q=(y_1, s)$.   
   First assume $ t_{x_1, y_1}> \max\{t, s\}$.  
    Let  $\tilde \gamma_{pq}:=\gamma_{x_1}|_{[t,  t_{x_1, y_1}]}*c*\bar\gamma_{y_1}|_{[s,  t_{x_1, y_1}]}$
     be the concatenation of three paths, where
      $c:[0,1]\rightarrow N_1\times \{ t_{x_1, y_1}\}$ is a  path in the horosphere $N_1\times \{ t_{x_1, y_1}\}$
       with length $1$ from $(x_1,  t_{x_1, y_1})$ to 
       $(y_1,   t_{x_1, y_1})$.   
       Here for every curve
    $\alpha: [a,b]\rightarrow X$  in a space $X$, we will use 
    $\bar \alpha: [a,b]\rightarrow X$
      to denote the curve
        $\bar \alpha(t)=\alpha(a+b-t)$   
      with the same image as that of $\alpha$  but reverse orientation. 
         It is clear that 
        $\ell(\tilde \gamma_{pq})=\tilde\rho_1(p, q)$.  
         Next assume $ t_{x_1, y_1}\le \max\{t, s\}$. 
          Without loss of generality we may assume $t<s$.
           In this case, let 
   $\tilde \gamma_{pq}:=\gamma_{x_1}|_{[t,s]}*c$, where $c$ is a minimal length path in the horosphere
   $N_1\times \{s\}$ from $(x_1, s)$ to $(y_1, s)$. 
     It is clear that  $\tilde\rho_1(p, q)-1\le \ell(\tilde \gamma_{pq})\le \tilde\rho_1(p, q)$.

   It is easy to see that the path 
   $\tilde \gamma_{pq}$ is a  $(1, H)$ quasigeodesic between $p$ and $q$  and that  its length
   $\ell(\tilde \gamma_{pq})\le d^{(1)}(p, q)+H$, with $H \geqslant 0$ depending only on $\delta$.  Hence by stability of quasigeodesics in Gromov-hyperbolic spaces,   for every length minimizing geodesic $\gamma_{pq}$ between $p$ and $q$ the Hausdorff distance between    $\tilde \gamma_{pq}$
     and $\gamma_{pq}$   satisfies:
     \begin{equation}\label{hausdorff}
     \mathrm{Hausdist}^{(1)}(\tilde \gamma_{pq},  \gamma_{pq})\le C,
     \end{equation}
      where $C$ depends only on $\delta$.

   Similarly,  let $d^{(2)}_t$ be the path metric on the set $N_2\times \{t\}\subset (S_2, d^{(2)})$.  For fixed $x_2, y_2\in N_2$,   the quantity 
$d^{(2)}_t(\gamma_{x_2}(t), 
\gamma_{y_2}(t))$ decreases exponentially as $t\rightarrow -\infty$. 
Let $t_{x_2, y_2}\in \mathbb R$ be such that  
$d^{(2)}_{t_{x_2, y_2}}(\gamma_{x_2}(t_{x_2, y_2}), 
\gamma_{y_2}(t_{x_2, y_2}))=1$.   
Define  a function $\tilde\rho_2: S_2\times S_2\rightarrow [0, +\infty)$ by:
   \begin{equation*}
   \tilde\rho_2((x_2,t), (y_2, s))=\left\{
   \begin{array}{rl}
   |t-s|+1&  \;\text{if}  \;  t_{x_2, y_2}\ge \min\{t, s\}\\
   (t-t_{x_2, y_2})+(s-t_{x_2, y_2})+1 &  \; \text{if}\; 
     t_{x_2, y_2}< \min\{t, s\}.
     \end{array}\right.
   \end{equation*}

There is a constant $C>0$ such that 
  $$|d^{(j)}((x_j, t), (y_j, s))-\tilde \rho_j((x_j, t), (y_j, s))|\le C,   \; \;  \forall (x_j, t), (y_j, s)\in S_j.$$
  Define $\tilde \rho: G\times G\rightarrow [0, +\infty)$ by   $$\tilde \rho(p,q)=\tilde \rho_1(\pi_1(p), \pi_1(q))+
  \tilde \rho_2(\pi_2(p), \pi_2(q))-|h(p)-h(q)|.$$
  Then $\rho$ and $\tilde \rho$ differ   by at most a fixed constant. 
  
  Let $p, q\in G$ and write $p=(x_1, x_2, t)$ and $q=(y_1, y_2, s)$.  Without loss of generality we may assume that $s\ge t$.  
   We shall construct a path  $\alpha_0$  from $q$ to $p$.    We first notice that   the left cosets of $S_i$  equipped with the path metric is isometric to 
        $(S_i, d^{(i)})$.  
        Denote  $p'=( x_1, y_2, t)$.  Let 
         $\alpha_0=\tilde \gamma_{qp'}*\tilde \gamma_{p'p}$, 
          where $\tilde \gamma_{qp'}\subset N_1\times \{y_2\}\times \mathbb R\approx S_1$ is the path from $q$ to $p'$  constructed in this subsection and similarly  
   $\tilde \gamma_{p'p}\subset \{x_1\}\times N_2\times \mathbb R\approx S_2$ is the path from $p'$ to $p$. 
   We have $\tilde \rho_1((y_1, s), (x_1,t))-1\le \ell(\tilde \gamma_{qp'})\le \tilde \rho_1((y_1, s), (x_1,t))$.   Let $q'=(x_1,  y_2, s)$.   Then $\tilde\gamma_{q'p}$ is the concatenation of 
   $\bar\gamma_{x_1}|_{[t,s]}$ and $\tilde \gamma_{p'p}$. Hence we  have
       $\tilde 
       \rho_2(((x_2, t), (y_2, s))-1\le \ell(\tilde \gamma_{p'p})+|s-t|\le \tilde 
       \rho_2(((x_2, t), (y_2, s))$.  It follows that the length $\ell_0$ of $\alpha_0$ satisfies
        $|\ell_0-\tilde \rho(p,q)|\le 2$.  
    
     Since $\rho$, $\tilde \rho$, $\ell_0$   differ from each other  by a fixed constant, the following gives an expression for these quantities up to a  constant:
      for $p=(x_1, x_2,t)$, $q=(y_1, y_2, s)$:  
     \begin{equation*}
   \tilde\rho(p, q)=\left\{
   \begin{array}{rl}
   |t-s|+2&  \;\text{if}  \;  t_{x_2, y_2}\ge \min\{t, s\}\;\text{and}\;   t_{x_1, y_1}\le \max\{t, s\}  \\
   2t_{x_1, y_1}-(s+t)+2&  \;\text{if}\; t_{x_2, y_2}\ge \min\{t, s\}\;\text{and}\;   t_{x_1, y_1}\ge \max\{t, s\}  \\
  (s+t)- 2t_{x_2, y_2}+2&  \;\text{if}\; t_{x_2, y_2}\le \min\{t, s\}\;\text{and}\;   t_{x_1, y_1}\le \max\{t, s\}  \\
  2t_{x_1, y_1}-2t_{x_2, y_2}-|s-t|   +2  &  \; \text{if}\; 
     t_{x_2, y_2}\le  \min\{t, s\}  \;\text{and}\;  t_{x_1, y_1}\ge \max\{t, s\} .
     \end{array}\right.
   \end{equation*}

\begin{proof}[Proof of Corollary~\ref{solpath}]
        We first consider  the case when $\mathbb R$ is perpendicular to $N$ with respect to $g$.
        In this case   $c=\{0\}\times \{0\}\times \mathbb R$ and the left cosets of $c$ are vertical geodesics $\gamma_{x,y}$.   Let $p=(x_1, x_2, t)$ and $q=(y_1, y_2, s)$  with $t\le s$. 
            With the notation from above,  the three left cosets are 
         $\beta_1=\gamma_{x_1, x_2}$, $\beta_2=\gamma_{x_1, y_2}$, $\beta_3=\gamma_{y_1, y_2}$. The points are $p_1=(x_1, x_2, t_{x_2, y_2})$,   
          $r_1=(x_1, y_2, t_{x_2, y_2})$, $r_2=(x_1, y_2, t_{x_1, y_1})$, $q_2=(y_1, y_2, t_{x_1, y_1})$.    Notice that the length $\ell_0$ of $\alpha_0$ satisfies
           $|\ell_0-(d(p, p_1)+d(r_1, r_2)+d(q_2, q))|\le 2$.   Now the claim follows from Theorem~\ref{soldistance}  and the fact that $\ell_0$ and $\rho(p,q)$ differ by a bounded constant.
           
           Now let $g$ be an arbitrary left-invariant Riemannian metric on $G$.  Let $c$ be a one-parameter subgroup of $G$ that is perpendicular to $N$ at $e$  with respect to $g$.
                Then the above argument goes through with vertical lines replaced with left cosets of $c$,  
                   $S_1$  replaced with  
                 $(N_1\times \{0\}\times \{0\})c$, and   $S_2$  
                  replaced with   
                    $(\{0\}\times N_2\times\{0\})c$.
           \end{proof}

  \subsection{Proof of Theorem~\ref{soldistancetheorem}}
  \label{subsec:43}

Let $G$, $d$,  $d^{(j)}$   and $\rho$ be as in  
 Subsection~\ref{soldistance}.  

\begin{lemma}\label{projection}
  The map $\pi_j: (G, d)\rightarrow (S_j, d^{(j)})$ is $L$-Lipschitz for some $L\ge 1$. 
\end{lemma}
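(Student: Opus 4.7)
The plan is to exploit the fact that $\pi_j$ is not merely a set-theoretic map but a Lie group homomorphism, and then to invoke the standard fact that any smooth homomorphism between connected Lie groups equipped with left-invariant Riemannian metrics is Lipschitz.

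First I would observe that, identifying $G = N_1 \times N_2 \rtimes \mathbb R$ and $S_j = N_j \rtimes \mathbb R$ as Lie groups, the map $\pi_j$ is a surjective Lie group homomorphism, with kernel $N_{3-j}$. In particular $\pi_j$ is smooth, and for every $g \in G$ the identity $\pi_j \circ L_g = L_{\pi_j(g)} \circ \pi_j$ holds, where $L_g$ denotes left-translation by $g$.

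Next I would estimate the operator norm of the differential. The differential at the identity $d(\pi_j)_e \colon \mathfrak g \to \mathfrak s_j$ is a linear map between finite-dimensional inner product spaces (equipped with the inner products coming from the Riemannian metrics $g$ and $g^{(j)}$), so it has a finite operator norm $L_j := \|d(\pi_j)_e\|$. Differentiating the intertwining identity above gives
\[
d(\pi_j)_x = d(L_{\pi_j(x)})_e \circ d(\pi_j)_e \circ d(L_{x^{-1}})_x
\]
for every $x \in G$. Because $g$ and $g^{(j)}$ are left-invariant, the two outer differentials are linear isometries, so $\|d(\pi_j)_x\|_{\mathrm{op}} = L_j$ for all $x \in G$. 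Setting $L := \max(L_1, L_2, 1)$ yields a uniform pointwise bound.

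Finally I would pass from infinitesimal to global control. Given $p,q \in G$ and $\varepsilon > 0$, choose a smooth path $\gamma \colon [0,1] \to G$ from $p$ to $q$ with length $\ell(\gamma) \le d(p,q) + \varepsilon$. Then $\pi_j \circ \gamma$ is a path in $S_j$ from $\pi_j(p)$ to $\pi_j(q)$, and
\[
\ell(\pi_j \circ \gamma) = \int_0^1 \bigl\| d(\pi_j)_{\gamma(t)}(\dot\gamma(t)) \bigr\|_{g^{(j)}} \, dt \le L \int_0^1 \|\dot\gamma(t)\|_g \, dt = L\,\ell(\gamma),
\]
so $d^{(j)}(\pi_j(p),\pi_j(q)) \le L\,(d(p,q) + \varepsilon)$. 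Letting $\varepsilon \to 0$ gives the desired inequality. There is no real obstacle here: the argument is a routine consequence of left-invariance and is essentially the same reason that the Riemannian exponential map of a Lie group is Lipschitz on bounded sets. I would note the statement holds for any homomorphism between connected Lie groups carrying left-invariant Riemannian metrics.
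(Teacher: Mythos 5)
Your proposal is correct and follows essentially the same route as the paper: both exploit that $\pi_j$ is a Lie group homomorphism intertwining left translations, so that left-invariance of $g$ and $g^{(j)}$ forces the operator norm of $D_x\pi_j$ to equal that of $D_e\pi_j$ for all $x$, whence Lipschitz continuity. Your final integration along near-minimizing paths just makes explicit a step the paper leaves implicit.
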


\begin{proof}  This follows from the fact that $\pi_j$ is a Lie group homomorphism. 
We shall only prove the case when $j=1$ since the 
  case for $j=2$ is similar. 
It suffices to show that the   operator  
   norm of   the tangential map $D_p\pi_1:  
  (T_p G, g)  \rightarrow (T_{\pi_1(p)}S_1,  g^{(1)})$  is  independent of the point $p$.  

It is easy to check that the following diagram commutes for every $x\in G$:
\[ \begin{tikzcd}
G \arrow{r}{\pi_1}  \arrow{d}{L_x} & 
S_1 \arrow{d}{L_{\pi_1(x)}}\\
G  \arrow{r}{\pi_1}  & S_1
\end{tikzcd} \]
This leads to the commuting diagram of tangential maps:
\[
\begin{tikzcd}
(T_xG, g)\arrow{r}{D_x\pi_1}  \arrow{d}{D_xL_{x^{-1}}}  &   (T_{\pi_1(x)}S_1,  g^{(1)})\arrow{d}{D_{\pi_1(x)}L_{\pi_1(x^{-1})}}\\
(T_e G, g)  \arrow{r}{D_e\pi_1}&  (T_e S_1,  g^{(1)})
\end{tikzcd} \]
Since the metrics $g$ and $g^{(1)}$ are left invariant, the maps $D_xL_{x^{-1}}$ and $D_{\pi_1(x)}L_{\pi_1(x^{-1})}$ are linear isometries. 
 Now the commuting diagram implies that 
  $D_x\pi_1:  (T_xG, g)\rightarrow  
  (T_{\pi_1(x)}S_1, g^{(1)})$  and  
  $D_e\pi_1:  (T_e G, g) \rightarrow 
 (T_e S_1,  g^{(1)})$ have the same operator norm.
 
\end{proof}

  Let  $p=(x_1, x_2, t)$ and $q=(y_1, y_2, s)$  with $t\le s$.  
  Let $\beta:[0, l]\rightarrow G$ be the arclength parametrization of  a length minimizing geodesic  from $q$ to $p$.  Let 
  $$h_+:=\max\{h(x)|x\in \text{im}(\beta)\}$$
  and 
  $$h_-:=\min\{h(x)|x\in \text{im}(\beta)\}. $$
   Set $D_+=t_{x_1, y_1}-h_+$ and 
  $D_-=h_--t_{x_2, y_2}$.

\begin{lemma}
\label{claim}
  There is a constant $C_0\ge 0$  independent of the points $p, q$ such that 
  $\max\{D_+, D_-\} \le C_0$. 
\end{lemma}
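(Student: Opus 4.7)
I would prove the claim by contradiction, establishing the bound on $D_+$ (the argument for $D_-$ is symmetric, using the projection $\pi_2$ and the Heintze structure of $S_2$ in place of $\pi_1$ and $S_1$). By swapping $\pi_1$ and $\pi_2$ if needed, I may assume $D_+ \geq D_-$ throughout, so that bounding $D_+$ will suffice. Suppose, for contradiction, that $D_+ > C_0$ for a universal constant $C_0$ to be chosen, so in particular $D_+ > 0$ and $h_+ < t_{x_1,y_1}$.

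The central observation is that by Lemma~\ref{projection}, the map $\pi_1\colon(G,d)\to(S_1,d^{(1)})$ is $L$-Lipschitz and preserves the height function. Hence $\pi_1\circ\beta$ is a curve in $S_1$ of length at most $L\cdot d(p,q)$, connecting $(y_1,s)$ to $(x_1,t)$, and contained in $\{h\leq h_+\}$. Prepending a vertical geodesic segment from $(y_1,h_+)$ down to $(y_1,s)$ and appending one from $(x_1,t)$ up to $(x_1,h_+)$ produces a path $c$ in $S_1$ whose endpoints lie on the horosphere at height $h_+$ and which avoids the horoball $B=\{h>h_+\}$ centered at the focal point of $S_1$. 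Since the height function is $1$-Lipschitz along $\beta$, the two vertical extensions contribute at most $2(h_+-h_-)\leq 2 d(p,q)$, so $\ell(c)\leq (L+2)d(p,q)$. By the geometry of the Heintze group $S_1$, the $S_1$-geodesic between the endpoints of $c$ rises to height roughly $t_{x_1,y_1}=h_+ + D_+$, whence $d^{(1)}\bigl((y_1,h_+),(x_1,h_+)\bigr)=2D_+ + O(1)$.

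Applying Lemma~\ref{path-outside}(1) to $c$ in $S_1$ yields
\[
\ell(c) \geq 2\,H(c) + 2^{(2D_+ - C - 2)/(2\delta)} - C - 10 D_+,
\]
which, combined with $\ell(c)\leq (L+2)\,d(p,q)$, gives a lower bound of the form
\[
d(p,q) \geq c_1 \cdot 2^{D_+/\delta} - O(D_+) \qquad (\ast)
\]
for some $c_1>0$ depending only on $L$ and $\delta$. On the other hand, the competitor path $\alpha_0$ of \S\ref{another} gives $d(p,q)\leq \ell(\alpha_0)\leq \rho(p,q)+O(1)$; and in the ``both positive'' Case B-B' of \S\ref{another} the explicit formula for $\rho$ simplifies, using $t_{x_1,y_1}=h_+ + D_+$ and $t_{x_2,y_2}=h_- - D_-$, to
\[
\rho(p,q) = 2(h_+ - h_-) + 2(D_+ + D_-) - (s-t) + O(1).
\]
I would split the final step into two cases by the size of the vertical oscillation $h_+ - h_-$. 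If $h_+ - h_-\leq 2D_+$, plugging this into the formula for $\rho$ and using $D_-\leq D_+$ yields $\rho(p,q)\leq 8D_+ + O(1)$; combined with $(\ast)$ this forces $D_+\leq C_0$ once $C_0$ is large enough. If instead $h_+ - h_- > 2D_+$, the hypothesis $H(c)>d^{(1)}\bigl((y_1,h_+),(x_1,h_+)\bigr)$ of Lemma~\ref{path-outside}(2) is satisfied, and I would extract the two ``parallel dip'' sub-intervals it provides: these confine long sub-segments of $\pi_1\circ\beta$ to a height strip of width at most $4D_+$, and pulling them back through $\pi_1$ exhibits two sub-arcs of $\beta$ on which one may construct a short-cut by routing through height $t_{x_1,y_1}$, where $N_1$-traversal has horospherical cost $O(1)$. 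The saving of order $2^{D_+/\delta}$ in horizontal length overwhelms the extra vertical cost of $O(D_+)$, producing a path from $q$ to $p$ strictly shorter than $\beta$ when $C_0$ is chosen large, contradicting minimality.

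The principal obstacle is closing the second case: the short-cut competitor has to respect the $N_2$-coordinates at the two endpoints of each cut sub-arc, which requires exploiting the direct product structure $(N_1\times N_2)\rtimes\mathbb R$ of the Sol-type group more delicately than the Heintze analysis of $S_1$ alone, as one needs to glue a horizontal excursion in $N_1$ at the high altitude $t_{x_1,y_1}$ without disturbing the $N_2$-profile of $\beta$.
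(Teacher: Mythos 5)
Your reduction to bounding $D_+$ under $D_+\ge D_-$ and your first case $h_+-h_-\le 2D_+$ are fine (and close in spirit to the paper's Subcase I.1.a: when the competitor $\alpha_0$ shows $d(p,q)=O(D_+)$, the exponential bound from Lemma~\ref{comparison-horosphere} applied to a $\pi_1$-projection beats the Lipschitz loss $1/L$). The genuine gap is your second case, $h_+-h_->2D_+$, which is the heart of the lemma and which you leave open. Your global inequality $(L+2)\,d(p,q)\ge 2H(c)+2^{(2D_+-C-2)/(2\delta)}-O(D_+)$ cannot close it, because there $d(p,q)$ is itself of order $2(h_+-h_-)+O(D_+)$ and the factor $L+2$ on the left swallows the term $2H(c)=2(h_+-h_-)$ on the right: one only gets $2^{cD_+}\le (2L+2)(h_+-h_-)+O(D_+)$, and $h_+-h_-$ is unbounded independently of $D_+$. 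Your proposed repair, excising the two sub-arcs furnished by Lemma~\ref{path-outside}(2) and shortcutting through height $t_{x_1,y_1}$, does not produce a shorter competitor: as you yourself flag, the endpoints of the excised sub-arcs have different $N_2$-coordinates, and in a Sol-type group $N_2$-displacement becomes exponentially \emph{more} expensive as the height increases, so the ``$O(1)$ horospherical cost'' covers only the $N_1$-part; moreover the dip band located by Lemma~\ref{path-outside}(2) may sit far below $h_+$, so the vertical detour up to $t_{x_1,y_1}$ and back is not controlled by the exponential saving, which is only in $D_+$. No contradiction with minimality of $\beta$ is reached, so the statement is not proved.

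The paper closes exactly this case by a different mechanism, with no shortcut and no Lipschitz estimate applied to all of $\beta$. It cuts $\beta$ at the first and last passage through the level $h_+-D_+$ (with Cases I--III according to the position of $h_+-D_+$ relative to $h(q)$ and $h(p)$), bounds the resulting pieces from below purely by height change in $G$ -- coefficient $1$, no loss -- so that these bounds already sum to at least $\ell_0-4-4D_+$; then, using \eqref{hausdorff} to get $d^{(1)}\bigl((x_1,h_+-D_+),(y_1,h_+-D_+)\bigr)\ge\frac{39}{10}D_+$ and the trichotomy $\frac{39}{10}=\frac72+\frac15+\frac15$, it finds one piece whose $\pi_1$-projection joins points at $d^{(1)}$-distance at least $D_+/5$ below a horosphere, and applies Lemma~\ref{comparison-horosphere} or Lemma~\ref{path-outside}(2) to that projection alone. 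Thus only the exponential gain is divided by $L$, and it is \emph{added} to the height-change lower bound for the rest of $\beta$; comparing $\ell(\beta)\le\ell_0$ with $\ell(\beta)\ge \ell_0-O(D_+)+\frac1L 2^{cD_+}$ bounds $D_+$. This bookkeeping -- exponential excess on one projected subcurve on top of an exact (loss-free) height count for the remainder, measured against the explicit competitor $\alpha_0$ -- is precisely the idea your second case is missing.
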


  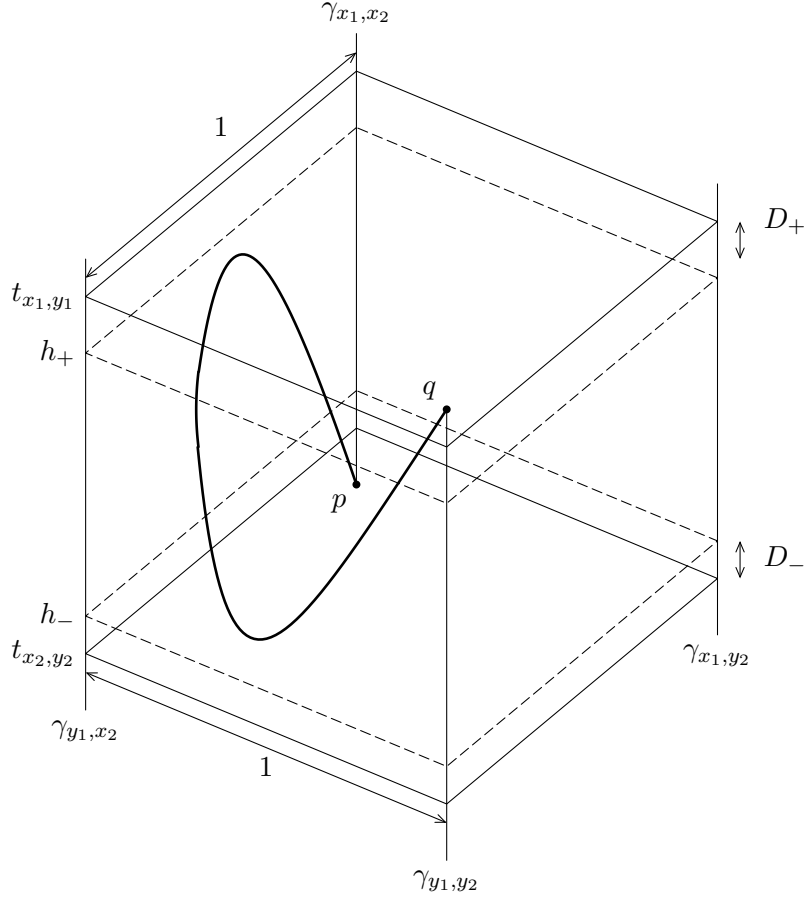
\begin{figure}
\begin{tikzpicture}[line cap=round,line join=round,>=angle 45,x=0.6cm,y=0.5cm]
\clip(-9,-15) rectangle (10,9);
\draw (-6,2)-- (-6,-10) node[below] {$\gamma_{y_1, x_2}$};

\draw (8,4)-- (8,-8) node[below] {$\gamma_{x_1, y_2}$};
\draw (0,-4) -- (0,8) node[above] {$\gamma_{x_1, x_2}$};
\draw (-6,1) -- (2,-3) -- (8,3) -- (0,7) -- cycle ;
\draw (2,-2) -- (2,-14) node[below] {$\gamma_{y_1, y_2}$};

\draw [color=black, line width=1pt,domain=0:6, samples = 50, variable = \t] plot({5*exp(-0.2*\t)-5},{-4+0.5*\t*(7-\t)});

\draw [shift={(0,-1.5)}, dash pattern = on 4pt off 2pt] (0,0)-- (-6,-6)-- (2,-10)-- (8,-4) -- cycle;

\draw [shift={(0,-2.5)}] (0,0)-- (-6,-6)-- (2,-10)-- (8,-4) -- cycle;

\draw [shift={(0,5.5)}, dash pattern = on 4pt off 2pt] (0,0)-- (-6,-6)-- (2,-10)-- (8,-4) -- cycle;

\draw [color=black, line width=1pt,domain=0:6.7, samples = 50, variable = \t] plot({6.37*exp(-0.3*\t)-4.37},{-2-0.5*\t*(7-\t)});

\draw [color=black, line width=1pt,domain=-1:1, samples = 50, variable = \t] plot({-3.55+0.05*\t*\t},{-2+\t)});

\draw (-6,1) node[anchor=east] {$t_{x_1,y_1}$};
\draw (-6,-0.5) node[anchor=east] {$h_+$};
\draw (-6,-7.5) node[anchor=east] {$h_-$};
\draw (-6,-8.5) node[anchor=east] {$t_{x_2,y_2}$};

\fill (0,-4) circle(1.5pt) node[anchor=north east]{$p$};
\fill (2,-2) circle(1.5pt) node[anchor=south east]{$q$};

\draw [<->] (-6,1.5) -- (0,7.5);
\draw (-3,5) node[anchor=south] {$1$};

\draw [<->] (-6,-9) -- (2,-13);
\draw (-2,-11) node[anchor=north] {$1$};


\draw [<->] (8.5,-6.5) -- (8.5,-5.5);
\draw (9.5,-6) node[anchor=center] {$D_-$};

\draw [<->] (8.5,2) -- (8.5,3);
\draw (9.5,3) node[anchor=center] {$D_+$};

\end{tikzpicture}
  \caption{Geodesic segment $\beta$ between $p$ and $q$ in coordinate view.}
  \end{figure}

  \begin{proof}[Proof of  Theorem~\ref{soldistancetheorem}  assuming  Lemma~\ref{claim}]
    We use the fact that the 
   minimal  distance between $N_1\times N_2\times \{t\}$ and $N_1\times N_2\times \{t'\}$  is $|t-t'|$.  
      Since $\ell_0$  is the length of a  curve between $q$ and $p$,  we have $d(p,q)\le \ell_0$.  We shall show that the reverse inequality holds up to an additive constant by using the expression for $\tilde \rho$.  Here we only write down the details for the case  $t_{x_2, y_2}\le  \min\{t, s\}$,    $t_{x_1, y_1}\ge \max\{t, s\}$
        as the other cases are similar.   First assume $\beta$  reaches height $h_+$  before it reaches height $h_-$.  
  From $q$ the curve  $\beta$ first reaches the height $h_+$,  so  this subcurve has length at least  $h_+-h(q)$.  Then $\beta$ goes down to the height $h_-$, so the length of this portion of $\beta$ is  at least $h_+-h_-$.   Finally $\beta$ goes up and reaches the height $h(p)$, so the length of this portion of $\beta$ is at least $h(p)-h_-$.    Hence the length of $\beta$ is   at  least 
  \begin{align*}
  &(h_+-h(q))+(h_+-h_-)
  +(h(p)-h_-)\\
  &=2h_+-2h_-  -(h(q)-h(p))\\
  &=2t_{x_1, y_1}-2D_+- 2t_{x_2, y_2}-2D_--(h(q)-h(p))\\
  &=\tilde \rho(p,q)-2-2D_+-2D_-\\
  &\ge \ell_0-4-2D_+-2D_-\\
  &\ge \ell_0-4-4C_0.
  \end{align*}
  
  If $\beta$  reaches height $h_-$  before it reaches height $h_+$, then a similar argument shows that its length is  greater than the quantity above. 
  \end{proof}
  
  \begin{proof}[Proof of Lemma~\ref{claim}]
  
  We will only consider the case  $D_+\ge D_-$   and show that $D_+\le C_0$. The case $D_-\ge D_+$ can be  similarly handled by considering $\pi_2$ instead of $\pi_1$. 
  We may assume that
  \begin{equation}\label{eq:D+islarge}
  D_+\ge 20\max\{C, 1\}
  \end{equation}
   with $C$ the constant from  \eqref{hausdorff}, otherwise we are done.   
   
  We consider three cases depending on
   the value of  $h_+-D_+$.
   
   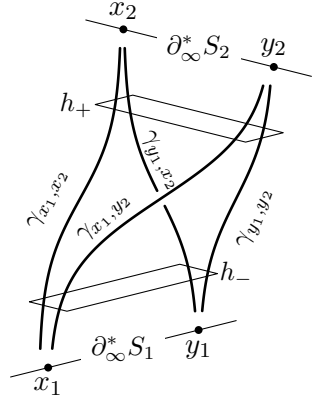
\begin{figure}
   
\begin{tikzpicture}[line cap=round,line join=round,>=angle 45,x=0.5cm,y=0.5cm]
\clip(-2,-6) rectangle (8,6);

\draw [color=black, line width=1pt,domain=-1.5:1.5, samples = 200, variable = \t] plot({5-tanh(\t)},{-2*\t});
\draw [color=black, line width=1pt,domain=-1.5:2, samples = 200, variable = \t] plot({5-(3*tanh(\t)+2)},{-2*\t});
\draw [color=black, line width=1pt,domain=-2:-0.1, samples = 200, variable = \t] plot({5-(2-tanh(\t))},{-2*\t});
\draw [color=black, line width=1pt,domain=0.1:1.5, samples = 200, variable = \t] plot({5-(2-tanh(\t))},{-2*\t});
\draw [color=black, line width=1pt,domain=-2:2, samples = 200, variable = \t] plot({5-(1.1*tanh(\t)+4.15)},{-2*\t});

\draw (-1,-4.75) -- (5,-3.25);
\draw (1,4.75) -- (7,3.25);

\fill (0,-4.5) circle(1.5pt) node[below] {$x_1$};
\fill (4,-3.5) circle(1.5pt) node[below] {$y_1$};
\draw (2,-4) node[stuff_fill,anchor=center] {\small $\partial_\infty^\ast S_1$};

\fill (2,4.5) circle(1.5pt) node[above] {$x_2$};
\fill (6,3.5) circle(1.5pt) node[above] {$y_2$};
\draw (4,4) node[stuff_fill,anchor=center] {\small $\partial_\infty^\ast S_2$};

\draw [shift={(1.25,2.5)}] (0,0) -- (1,0.25) -- (5,-0.75) -- (4,-1) -- cycle;

\draw [shift={(-0.5,-2.75)}] (0,0) -- (1,-0.25) -- (5,0.75) -- (4,1) -- cycle;

\draw (0,0) node[anchor=center, rotate=65] {\small $\gamma_{x_1,x_2}$};

\draw (1.5,-0.5) node[anchor=center, rotate=45] {\small $\gamma_{x_1,y_2}$};

\draw (5.5,-0.5) node[anchor=center, rotate=70] {\small $\gamma_{y_1,y_2}$};

\draw (3,1) node[anchor=center, rotate=-60] {\small $\gamma_{y_1,x_2}$};

\draw (5,-2) node[anchor=center] {\small $h_-$};

\draw (0.75,2.5) node[anchor=center] {\small $h_+$};

\end{tikzpicture}

   \caption{Metric view of the four vertical geodesics involved.}
   \end{figure}
  
 \begin{description}

 \item[{Case I}]    $h_+-D_+> h(q)$.

   We will divide the curve $\beta$ into several subcurves.   
  Let  $t_1\in [0,l]$ be the first  $t$ such that $h(\beta(t))\ge h_+-D_+$.  
    Let $t_2\in [0,l]$ be the last   $t$ such that $h(\beta(t))\ge h_+-D_+$.  Also let $l_0\in [0,l]$ be such that $h(\beta(l_0))=h_-$.  (There may be more than one such $l_0$; we just pick one.)
    
    \begin{enumerate}

  \item  Subcurve $\beta_1$:  
       Let $\beta_1=\beta|_{[0, t_1]}$.
      Set  $\ell_1=h_+-D_+-h(q)$  if $l_0\notin [0, t_1]$ and $\ell_1=(h(q)-h_-)+(h_+-D_+-h_-)$  if  $l_0\in [0, t_1]$ .  
        By considering the height change we see that $\ell(\beta_1)\ge \ell_1$.   Remember that $h(p) \le h(q)$ by assumption.
    Write $\beta(t_1)=(a_1, a_2,  h_+-D_+)$  with $a_i\in N_i$.

   \item Subcurve $\beta_2$:  $\beta_2=\beta|_{[t_1, t_2]}$. Set $\ell_2=2D_+$  if $l_0\notin [t_1, t_2]$ and $\ell_2=2(h_+-h_-)$  if $l_0\in [t_1, t_2]$ 
    Again, by considering the height change we obtain    
     $\ell(\beta_2)\ge \ell_2$.    
     
     Write $\beta(t_2)=(b_1, b_2,  h_+-D_+)$ with $b_i\in N_i$. 
   
   
   \item Subcurve $\beta_3$:  $\beta_3=\beta|_{[t_2, l]}$.
   Set  $\ell_3=(h_+-D_+)-h(p)$ if 
       $l_0\notin [t_2, l]$ and $\ell_3=((h_+-D_+)-h_-)+(h(p)-h_-)$    
       if   $l_0\in [t_2, l]$.  
       As above  we have $\ell(\beta_3)\ge \ell_3$.    
   
    \end{enumerate}
   We observe that 
   $\sum \ell_j\ge 2(h_+-h_-)-(h(q)-h(p))\ge   \ell_0-4-2D_+-2D_-\ge \ell_0-4-4D_+$.  
   (Recall that $\ell_0$ is the length of $\alpha_0$.)
   
    {\color{black}
   \begin{figure}
   
\begin{tikzpicture}[line cap=round,line join=round,>=angle 45,x=0.5cm,y=0.5cm]
\clip(-3,-3) rectangle (10,12);
\draw [->] (0,-2) -- (0,10);

\draw [color=black, line width=1pt,domain=1.15:1.66, samples = 500, variable = \t] plot({1.5*\t-0.05*\t*\t},{3+4*sin(((0.3*\t)+0.1*\t*\t) r)});

\draw [color=black, line width=1pt,domain=1.9:3.4, samples = 500, variable = \t] plot({1.5*\t-0.05*\t*\t},{3+4*sin(((0.3*\t)+0.1*\t*\t) r)});

\draw [color=black, line width=1pt,domain=3.65:6.45, samples = 500, variable = \t] plot({1.5*\t-0.05*\t*\t},{3+4*sin(((0.3*\t)+0.1*\t*\t) r)});

\draw [shift={(0,7)}, dash pattern = on 2pt off 2pt] (8,0) --(0,0) node[left]{$h_+$};
\draw [shift={(0,8)}, dash pattern = on 2pt off 2pt] (8,0) --(0,0) node[left]{$t_{x_1,y_1}$};
\draw [shift={(0,6)}, dash pattern = on 2pt off 2pt] (8,0) --(0,0) node[left]{$h_+-D_+$};
\draw [shift={(0,4.55)}, dash pattern = on 2pt off 2pt] (1.4,0) --(0,0) node[left]{$t= h(q)$};
\draw [shift={(0,2.55)}, dash pattern = on 2pt off 2pt] (7.5,0) --(0,0) node[left]{$s =h(p)$};
\draw [shift={(0,-1)}, dash pattern = on 2pt off 2pt] (6.8,0) --(0,0) node[left]{$h_-$};

\fill (1.4,4.55) circle(1.5pt) node[anchor= north west] {$\beta(0)$};
\fill (7.6,2.55) circle(1.5pt) node[anchor= south] {$\beta(l)$};
\fill (2.5,6) circle(1.5pt) node[anchor= south east] {$\beta(t_1)$};
\fill (4.7,6) circle(1.5pt) node[anchor= south west] {$\beta(t_2)$};
\fill (6.7,-1) circle(1.5pt) node[anchor= north] {$\beta(l_0)$};

\draw (2.5,4.8) node[anchor=center]{$\beta_1$};
\draw (4,7.5) node[anchor=center]{$\beta_2$};
\draw (6,4.5) node[anchor=center]{$\beta_3$};

\draw [<->] (9,2.55) -- (9,6);
\draw (9.5,4.2) node[anchor=center] {$\ell_1$};

\end{tikzpicture}
\caption{Case I with $l_0 \notin [0,t_2]$, image of $\beta$ through the projection $\pi_1$.}
   \end{figure}
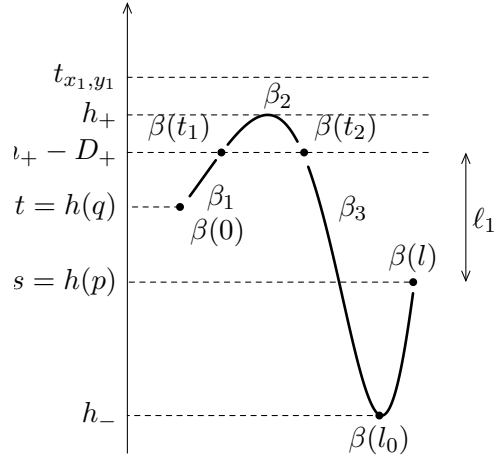

We now claim that

 \begin{equation}\label{d1-is-large}
 d^{(1)}((x_1, h_+-D_+), (y_1, h_+-D_+))\ge \frac{39}{10}D_+.
 \end{equation}
 Indeed, by the triangle inequality and the fact that the $d^{(1)}$-geodesic $\gamma_\cap$ between $(x_1, h_+-D_+)$ and $(y_1, h_+-D_+)$ lies in a $C$-neighborhood of the path $\gamma_{\sqcap}$ defined as a concatenation of vertical geodesics and length-minimizing segment in the horosphere of height $t_{x_1,y_1}$ between the same points (See Figure \ref{fig:d1-is-large}), we have that $ d^{(1)}((x_1, h_+-D_+), (y_1, h_+-D_+))\ge 2 (2D_+ - C)$ so that
 \begin{align*}
 d^{(1)}((x_1, h_+-D_+), (y_1, h_+-D_+)) & \ge 2 \left( 2D_+ - \frac{1}{20} D_+ \right) \qquad \text{ by \eqref{eq:D+islarge} } \\
  & = 4D_+ - \frac{1}{10} D_+ = \frac{39}{10} D_+.
 \end{align*}
   }
   
   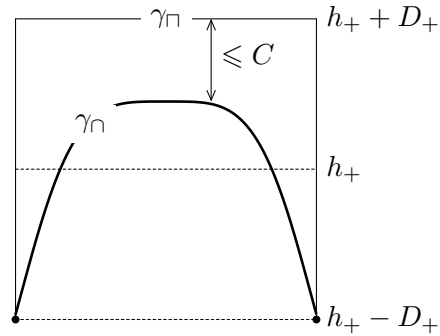
\begin{figure}
   \begin{tikzpicture}[line cap=round,line join=round,>=angle 45,x=1.0cm,y=1.0cm]
\clip(-3.7,-2) rectangle (3.7,3.5);
\draw (-2,-1)-- (-2,3);
\draw (-2,3)-- (2,3) node[right]{$h_++D_+$};
\draw (2,-1)-- (2,3);
\draw [dash pattern=on 1pt off 1pt] (-2,1)-- (2,1) node[right]{$h_+$} ;
\draw [dash pattern=on 1pt off 1pt] (-2,-1)-- (2,-1) node[right]{$h_+ - D_+$} ;
\fill [color=black] (-2,-1) circle (1.5pt);
\fill [color=black] (2,-1) circle (1.5pt);
\draw [color=black, line width=1pt,domain=-2:2, samples = 500, variable = \t] plot({\t},{2*cos(\t*\t/2 r)-0.1});

\draw (0,3) node[stuff_fill, anchor=center] {$\gamma_\sqcap$};

\draw (-1,1.6) node[stuff_fill, anchor=center] {$\gamma_\cap$};

\draw [<->] (0.6,3)-- (0.6,1.9);

\draw (0.6,2.5) node[anchor=west] {$\leqslant C$};

\end{tikzpicture}
\caption{Proof of inequality \eqref{d1-is-large}. The picture is in $(S_1, d^{(1)})$.}
\label{fig:d1-is-large}
   \end{figure}

     Since 
     \[ \frac{39}{10} = \frac{7}{2} + \frac{1}{5} + \frac{1}{5}, \]
       by the  triangle inequality one of the following holds:
\begin{align}
d^{(1)}((y_1, h_+-D_+), (a_1, h_+-D_+)) & \ge D_+/5; \tag{Condition I.1} \label{condI1} \\
d^{(1)}((a_1, h_+-D_+), (b_1, h_+-D_+)) & \ge \frac{7}{2}D_+; \tag{Condition I.2} \label{condI2} \\
d^{(1)}((b_1, h_+-D_+), (x_1, h_+-D_+))& \ge D_+/5 \tag{Condition I.3}. \label{condI3} 
\end{align}

  Since $\beta $ is a  minimizing geodesic between $q$ and $p$ and $\ell_0$ is the length of a path between $p$ and $q$ we have 
  $\ell(\beta)\le \ell_0$.    
  On the other hand,  Condition  (I.1) or (I.2) or (I.3) holds.

  We first   assume \eqref{condI1} holds.   Let $y$ be the point on the vertical geodesic $\gamma_{y_1, y_2}$ through $q$ with $h(y)=h_+-D_+$, and $qy$ the segment of $\gamma_{y_1, y_2}$  between $q$ and $y$.  
    Then  the curve $\alpha_1:=\beta_1\cup qy$ is a path joining $y$ and $\beta(t_1)$ that lies below the horosphere $h=h_+-D_+$.  
(Recall that $\beta$ starts from $q$.)    
    
    To simplify notation denote
      $q'=\pi_1(y)=(y_1, h_+-D_+)$ and $p'=\pi_1(\beta(t_1))=(a_1, h_+-D_+)$. Then \eqref{condI1} simply  says $d^{(1)}(p',q')\ge D_+/5$.    
      \begin{description}
      
     \item[Subcase I.1.a]   $H(\alpha_1)\le d^{(1)}(p',q')$. 
          We use  
        Lemma~\ref{comparison-horosphere}   
            to conclude  $\ell(\pi_1\circ  \alpha_1)\ge  2^{\frac{d^{(1)}(p',q')-C-2}{2\delta}} -C$.    
             By Lemma~\ref{projection}  the map $\pi_1$ is $L$-Lipschitz for some $L>0$, and so we have
               \[ \ell(\alpha_1)\ge  \ell(\pi_1\circ \alpha_1)/L\ge \frac{1}{L} 2^{\frac{d^{(1)}(p',q')-C-2}{2\delta}} -C/L.\]  
               As  $d(q,y)\le H(\alpha_1)\le   d^{(1)}(p',q')$, we have 
                    $\ell(\beta_1)=\ell(\alpha_1)-d(q,y)\ge \frac{1}{L} 2^{\frac{d^{(1)}(p',q')-C-2}{2\delta}} -C/L-d^{(1)}(p',q')$.  
                  Note that $\ell_1\le 2 H(\beta_1)=2 H(\alpha_1)\le 2 d^{(1)}(p',q')$.   Now we  have 
              \begin{align*}
              \ell_0 &\ge 
              \ell(\beta)\\
             & =\ell(\beta_1)+\ell(\beta_2)+\ell(\beta_3)\\
              &\ge \ell(\beta_1)  +\ell_2+\ell_3\\
              &=(\ell(\beta_1) -\ell_1)+(\ell_1 +\ell_2+\ell_3)\\
              &\ge  ( \frac{1}{L} 2^{\frac{d^{(1)}(p',q')-C-2}{2\delta}} -C/L-d^{(1)}(p',q')-2d^{(1)}(p',q'))+(\ell_0-4-4D_+)\\
              &\ge \ell_0-4-23 d^{(1)}(p',q')+\frac{1}{L} 2^{\frac{d^{(1)}(p',q')-C-2}{2\delta}} -C/L, 
              \end{align*}
              where   for the last inequality we used $d^{(1)}(p',q')\ge D_+/5$.
               Now it is clear that $d^{(1)}(p',q')$ and so $D_+$ is bounded above by a constant depending only on $L$, $C$, and $\delta$.

            \item[Subcase I.1.b] $H(\alpha_1)> d^{(1)}(p',q')$.  
    Then    Lemma~\ref{path-outside}  (2)   applied to $\pi_1\circ \alpha_1$   implies there is a subcurve $\tilde\beta_1$ of $\beta_1$ that is either   a segment or the union of two segments  of $\beta_1$   such that the height change (of the end points of the segments in $\tilde \beta_1$) is at most $4d^{(1)}(p',  q')$  and the length of $\pi_1\circ \tilde\beta_1$ satisfies
   $$\ell(\pi_1\circ \tilde \beta_1)\ge   2^{\frac{d^{(1)}(p',q')-C-2}{2\delta}}-C-3d^{(1)}(p',q').$$
    On the other hand, the map $\pi_1$ is $L$-Lipschitz  so  we have $\ell(\tilde \beta_1)\ge  \ell(\pi_1\circ \tilde\beta_1)/L$.  
    
    Let $\tilde{\tilde{\beta}}_1$ be  the complement of $\tilde \beta_1$ in $\beta_1$.  Since  the height change of $\beta_1$ is at least $\ell_1$ and the height change of $\tilde \beta_1$ is at most $4d^{(1)}(p',q')$, we see the length of $\tilde{\tilde{\beta}}_1$  satisfies $\ell(\tilde{\tilde{\beta}}_1)\ge \ell_1-4d^{(1)}(p',q')$.   Together with the estimate of the length of $\tilde\beta_1$ from the above paragraph we get $\ell(\beta_1)\ge \ell_1+\frac{1}{L}2^{\frac{d^{(1)}(p',q')-C-2}{2\delta}}-C/L-(3/L+4)d^{(1)}(p',q')$.     
    It follows that 
    \begin{align*}
    \ell_0  & \ge  \ell(\beta)\\
    & \ge \ell_1+\ell_2+\ell_3+\frac{1}{L}2^{\frac{d^{(1)}(p',q')-C-2}{2\delta}}-C/L-(3/L+4)d^{(1)}(p',q')\\
    &\ge \ell_0-4-4D_++  \frac{1}{L}2^{\frac{d^{(1)}(p',q')-C-2}{2\delta}}-C/L-(3/L+4)d^{(1)}(p',q')\\
    &\ge \ell_0-4+  \frac{1}{L}2^{\frac{d^{(1)}(p',q')-C-2}{2\delta}}-C/L-(3/L+24)d^{(1)}(p',q'),
    \end{align*}
      where   again for the last inequality we used $d^{(1)}(p',q')\ge D_+/5$.
      Now it is clear that $d^{(1)}(p',q')$ and so $D_+$ is bounded above by a constant depending only on $L$, $C$, and $\delta$. 
    
      \end{description}
      

       This finishes the proof  of the Claim  in Case I, Condition \eqref{condI1}.
       
        Now assume \eqref{condI2} holds.  This condition implies $d^{(1)}((a_1, h_+),  
         (b_1, h_+))>D_+$.     Since 
         $\alpha_2:=\gamma_{a_1, a_2}|_{[h_+-D_+, h_+]}\cup \beta_2\cup \gamma_{b_1, b_2}|_{[h_+-D_+, h_+]}$  lies below the height $h_+$,
          an argument similar to the case of \eqref{condI1} finishes the proof.   
      
    Finally we assume \eqref{condI3} holds.  In this case the curve  $\alpha_3:=\gamma_{x_1, x_2}|_{[h(p), h_+-D_+]}\cup \beta_3$ lies below the height
     $h_+-D_+$ and we repeat the above argument.

    
    \item
 [Case II]    $h(p)<h_+-D_+\le  h(q)$.

    In this case 
 we  divide the curve $\beta$ into  two  subcurves.   
    Let $t_2\in [0,l]$ be the last   $t$ such that $h(\beta(t))\ge h_+-D_+$.  Also let $l_0\in [0,l]$ be such that $h(\beta(l_0))=h_-$.  
     
    \begin{enumerate}

     \item  Subcurve $\beta_2$:  $\beta_2=\beta|_{[0, t_2]}$. Set $\ell_2=(h_+-h(q))+(h_+-(h_+-D_+))$  if $l_0\notin [0, t_2]$ and 
     $\ell_2=(h_+-h(q))+(h_+-h_-)+(h_+-D_+-h_-)$  if $l_0\in [0, t_2]$ .  By considering the height change we obtain    
     $\ell(\beta_2)\ge \ell_2$.    
     Write $\beta(t_2)=(b_1, b_2,  h_+-D_+)$ with $b_i\in N_i$. 
   
   
   \item  Subcurve $\beta_3$:  $\beta_3=\beta|_{[t_2, l]}$.
   Set  $\ell_3=(h_+-D_+)-h(p)$ if 
       $l_0\notin [t_2, l]$ and $\ell_3=((h_+-D_+)-h_-)+(h(p)-h_-)$    
       if   $l_0\in [t_2, l]$.  As above  we have $\ell(\beta_3)\ge \ell_3$.    
     \end{enumerate}
     \begin{figure}
     
\begin{tikzpicture}[line cap=round,line join=round,>=angle 45,x=0.5cm,y=0.5cm]
\clip(-3,0) rectangle (10,12);
\draw [->] (0,1) -- (0,10);

\draw [color=black, line width=1pt,domain=1.15:1.66, samples = 500, variable = \t] plot({9-(1.5*\t-0.05*\t*\t)},{3+4*sin(((0.3*\t)+0.1*\t*\t) r)-0.4*ln(\t-1)*ln(\t-1)});

\draw [color=black, line width=1pt,domain=1.9:3.65, samples = 500, variable = \t] plot({9-(1.5*\t-0.05*\t*\t)},{3+4*sin(((0.3*\t)+0.1*\t*\t) r)});

\draw [color=black, line width=1pt,domain=3.65:6.2, samples = 500, variable = \t] plot({9-(1.5*\t-0.05*\t*\t)},{3+4*sin(((0.3*\t)+0.1*\t*\t) r)+0.35*(\t-3/65)*(\t-3.65)});

\draw [color=black, line width=1pt,domain=6.2:6.43, samples = 500, variable = \t] plot({9-(1.5*\t-0.05*\t*\t)},{3+4*sin(((0.3*\t)+0.1*\t*\t) r)+0.35*(\t-3/65)*(\t-3.65)});

\draw [shift={(0,6)}, dash pattern = on 2pt off 2pt] (6,0) --(0,0) node[left]{$h_+-D_+$};
\draw [shift={(0,3)}, dash pattern = on 2pt off 2pt] (7.35,0) --(0,0) node[left]{$s= h(p)$};
\draw [shift={(0,8.55)}, dash pattern = on 2pt off 2pt] (1.5,0) --(0,0) node[left]{$t =h(q)$};
\draw [shift={(0,2.2)}, dash pattern = on 2pt off 2pt] (2.2,0) --(0,0) node[left]{$h_-$};

\fill (7.35,3) circle(1.5pt) node[anchor= north west] {$\beta(l)$};
\fill (1.4,8.55) circle(1.5pt) node[anchor= south] {$\beta(0)$};
\fill (6.5,6) circle(1.5pt) node[anchor= south west] {$\beta(t_2)$};
\fill (2.55,2.2) circle(1.5pt) node[anchor= north] {$\beta(l_0)$};

\draw (1,7.5) node[anchor=center]{$\beta_2$};
\draw (6,4.5) node[anchor=center]{$\beta_3$};


\end{tikzpicture}
     
     \caption{$\pi_1$-projection of $\beta$ in Case II when $l_0 \in [0,t_2]$.}
     \end{figure}
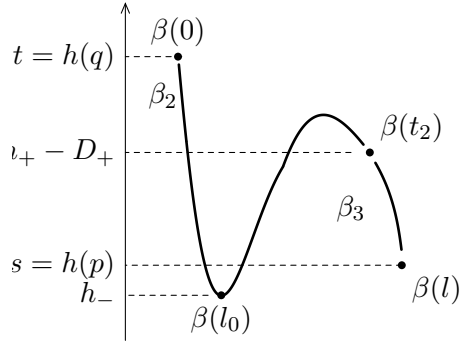
   
   We observe that   $\ell_2+\ell_3\ge 2(h_+-h_-)-(h(q)-h(p))\ge \ell_0-4-4D_+$  as before.  
   
    As before
     $d^{(1)}((x_1, h_+-D_+), (y_1, h_+-D_+))\ge   \frac{39}{10}D_+$ and so by     the  triangle inequality one of the following holds:

     \begin{align}
       d^{(1)}((y_1, h_+-D_+), (b_1, h_+-D_+))\ge \frac{37}{10}D_+; \label{condII1} \tag{Condition II.1} \\ 
   d^{(1)}((b_1, h_+-D_+), (x_1, h_+-D_+))\ge D_+/5. \label{condII2} \tag{Condition II.2}
  \end{align}
  
    First  assume \eqref{condII1} holds.  This implies   $d^{(1)}((y_1, h_+), (b_1, h_+))>D_+$.   The curve 
     $\alpha_2:=\gamma_{b_1, b_2}|_{[h_+-D_+, h_+]}\cup \beta_2\cup \gamma_{y_1, y_2}|_{[h(q), h_+]}$   lies below the height $h_+$ 
       and we can repeat the  argument in Case I to finish the proof. 
  
  Finally we assume \eqref{condII2} holds. In this case the curve $\alpha_3:=\gamma_{x_1, x_2}|_{[h(p), h_+-D_+]}\cup \beta_3$ lies below the height $h_+-D_+$ 
   and we can repeat the  argument in Case I to finish the proof.

  \item[{Case III}]  $h_+-D_+\le h(p)$.  As before we have $d^{(1)}((x_1, h_+-D_+), (y_1, h_+-D_+))\ge   \frac{39}{10}D_+$, which implies
   $d^{(1)}((x_1, h_+), (y_1, h_+))>D_+$.  In this case the curve $\alpha:=\gamma_{x_1, x_2}|_{[h(p), h_+]}\cup \beta\cup \gamma_{y_1, y_2}|_{[h(q), h_+]}$ lies below
    the height $h=h_+$ and we can apply the previous argument to conclude. \qedhere
  \end{description}
  
  \end{proof}

\section{Reformulating former results}
\label{sec:reformulation}

\subsection{A special case of the pointed sphere conjecture}
\label{subsec:spec-case-ps}

We shall refer here to the pointed sphere conjecture of Cornulier recorded in \cite[Conjecture 19.104]{Cornulier:qihlc}.


{\color{black} 
By first stratum of a Carnot algebra with Carnot derivation $D$, we mean the eigenspace $\ker(D-1)$, which by assumption Lie generates the Carnot algebra, see  \cite{ledonne_primer}. The higher strata are the subspaces $\ker(D-i)$ for $i \geqslant 2$; the Lie algebra is a direct sum of its strata. (One also encounters the term {\em layer} in the literature.)}

Let  $N$ be a Carnot group with Lie algebra $\mathfrak n$ and first stratum $V_1$. We say that $N$ (or equivalently $\mathfrak n$) has {reducible first stratum} if there is a nontrivial subspace $W$ of $V_1$ 
such that for 
every strata-preserving automorphism $\phi$ of $\mathfrak n$ one has $\phi(W)=W$.
Such a notion has been studied in  \cite{XieReducible}, however, the reader should not mistake it with the notion of reducibility, from the same paper.

 \begin{proposition}
 \label{prop:pointed-sphere-red-first-stratum}
Let $S=N\rtimes \mathbb R$ be a Heintze group of Carnot type. 
Assume that $N$ has reducible first stratum.
Then, the pointed sphere conjecture holds for $S$.
Namely, every quasi-symmetric self-homeomorphism of $\partial S$ fixes the focal point in $\partial S$.
\end{proposition}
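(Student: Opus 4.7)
The plan is to argue by contradiction. Identify $\partial S$ with $\widehat N = N \cup \{\omega\}$, where $\omega$ denotes the focal point, and recall that because $S$ is of Carnot type, every visual metric on $\widehat N$ restricted to $N$ is quasi-symmetrically equivalent to a positive power of the Carnot--Carath\'eodory metric $d_{\mathrm{cc}}$ on $N$; this is the standard identification of the parabolic visual gauge for Carnot-type Heintze groups.

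Suppose, for contradiction, that $\phi$ is a quasi-symmetric self-homeomorphism of $\partial S$ with $\phi(\omega) = p \in N$. The group $N$ acts on $\widehat N$ by isometries fixing $\omega$ and acting on $N$ by left translation; composing $\phi$ on both sides with appropriately chosen such translations produces a quasi-symmetric self-homeomorphism $\psi$ of $\widehat N$ that exchanges $\omega$ and the identity $e \in N$. It therefore suffices to rule out the existence of such a ``swap''.

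Invoke now the reducibility of the first stratum. Let $W \subset V_1$ be a proper nontrivial subspace preserved by every strata-preserving automorphism of $\mathfrak n$, let $\mathfrak h \subset \mathfrak n$ be the Lie subalgebra generated by $W$, and set $H = \exp \mathfrak h \subset N$. A quasi-symmetric self-homeomorphism of $(N, d_{\mathrm{cc}})$ is Pansu-differentiable almost everywhere, with differentials that are graded automorphisms of $\mathfrak n$ up to dilation (compare with the approach of \cite{SX}). By reducibility, each such graded automorphism preserves $W$, hence preserves the foliation $\mathcal F$ of $N$ by left cosets of $H$; this is the geometric rigidity extracted in \cite{XieReducible}.

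The main obstacle is to turn this local rigidity at a finite point into a true obstruction against $\psi$, whose behavior near $e$ is tied to its behavior near $\omega$ by the exchange condition. Following the strategy of \cite{XieReducible}, one picks a point of Pansu differentiability of $\psi$ close to $e$, reads off the graded tangent map there, and observes that the induced ``swap at infinity'' would have to carry the translation-invariant foliation $\mathcal F$ into asymptotic data concentrated at $\omega$ which cannot arise as the image of an $H$-coset based near a finite point. This incompatibility rules out $\psi$ and forces $\phi(\omega) = \omega$ for every quasi-symmetric self-homeomorphism $\phi$ of $\partial S$.
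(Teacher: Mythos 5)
Your setup is the same as the paper's: identify $\partial S$ with $N\cup\{\omega\}$, argue by contradiction, use reducibility of the first stratum to produce a proper nontrivial subgroup $H=\exp(\mathfrak h)$ (the paper calls it $G$) invariant under all strata-preserving automorphisms, and then invoke Pansu differentiability together with the argument of \cite{XieReducible} to conclude that the restriction of $\phi$ away from $\phi^{-1}(\omega)$ preserves the foliation by left cosets of $H$. Two small imprecisions: the normalization to a ``swap'' $\psi$ exchanging $e$ and $\omega$ is harmless but unnecessary, and Pansu's theorem is applied not to a self-homeomorphism of $(N,d_{\mathrm{cc}})$ but to a quasiconformal map between the open sets $U_1=N\setminus\{\phi^{-1}(\omega)\}$ and $U_2=N\setminus\{\phi(\omega)\}$, which is how the paper phrases it.

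The genuine gap is in your final paragraph, which is exactly where the contradiction must be produced. Saying that the swap ``would have to carry the translation-invariant foliation into asymptotic data concentrated at $\omega$ which cannot arise as the image of an $H$-coset based near a finite point'' is not an argument: by your own leaf-preservation step, the image of (a piece of) an $H$-coset \emph{is} contained in an $H$-coset, so there is no prima facie incompatibility with cosets through finite points, and picking a single Pansu differentiability point near $e$ gives only local linear data, not a global obstruction. What is missing is the topological characterization of $\omega$ that the paper uses to close the argument: every coset $xH$ is unbounded in $N$, so its closure in $\partial S$ contains $\omega$; hence $\omega$ lies in the closure of \emph{every} leaf of the (singular) coset foliation, whereas a point of $N$ — in particular $\phi(\omega)$, which lies in the closure only of the punctured leaf $\phi(\omega)H\setminus\{\phi(\omega)\}$ — does not. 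Since $\phi$ is a homeomorphism of the sphere carrying leaves of $U_1$ into leaves of $U_2$, continuity forces $\phi(\omega)$ to lie in the closure of every image leaf; taking two leaves whose images lie in distinct (hence disjoint) cosets shows this is impossible for a point of $N$, so $\phi(\omega)=\omega$. Without this step (or an equivalent one), your proposal stops short of a proof, even though everything before it follows the paper's route.
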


\begin{figure}
\includegraphics[scale=0.1]{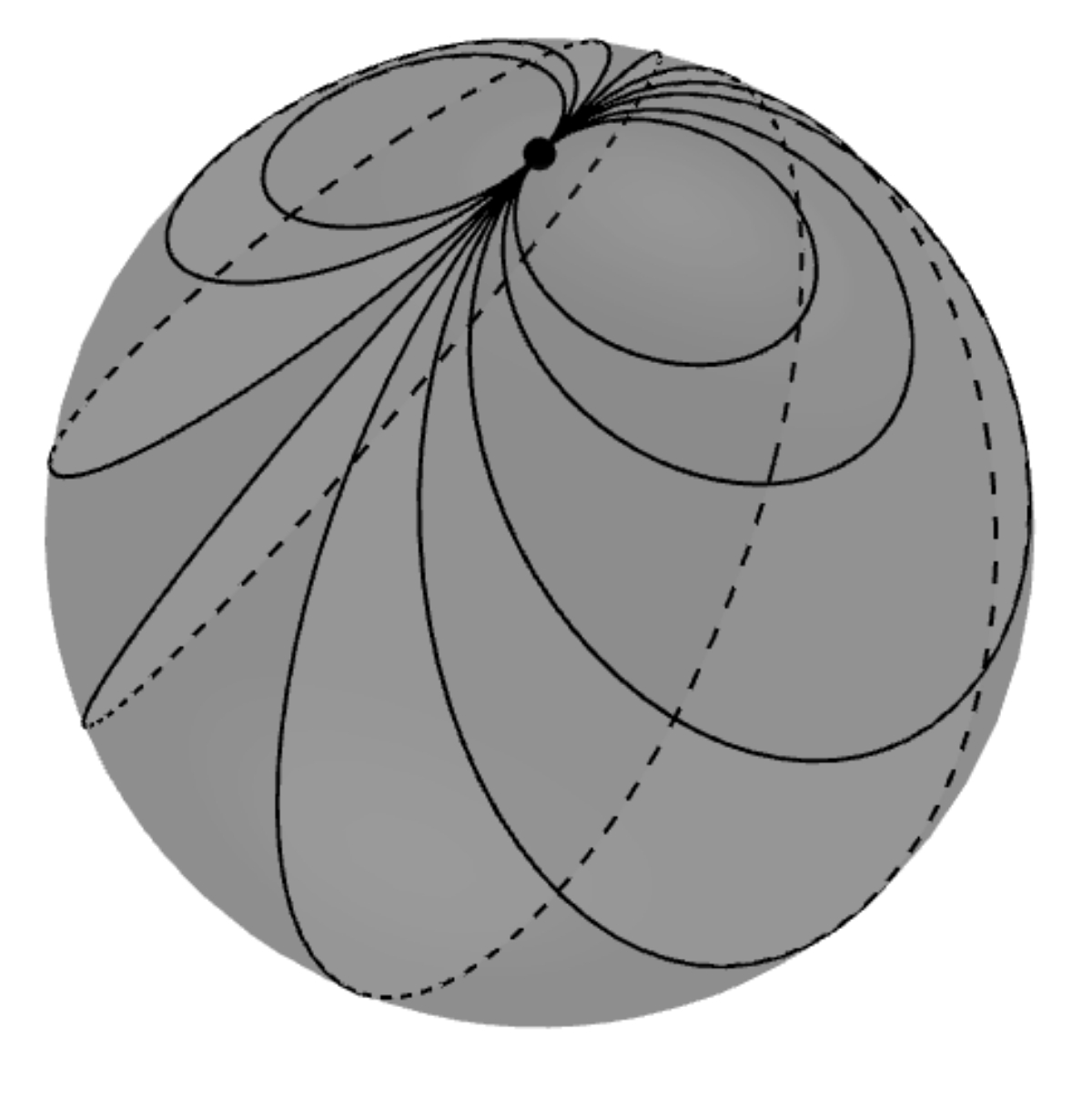}
\caption{Invariant foliation on the boundary at infinity.}
\label{fig:invariant-foliation}
\end{figure}

\begin{proof}
The argument that we shall follow is similar to the one in \cite{LeDonneXie} and it is based on \cite{XieReducible}, such a  principle goes back to \cite[before Corollaire 6.9]{PansuDimConf}. 
Namely, we shall prove that the focal point in $\partial S$ is fixed by proving that a special foliation in $\partial S$ is preserved (See Figure~\ref{fig:invariant-foliation}).

Let $\omega$ be the focal point in $\partial S$, so that  $\partial S = N\cup \{\omega\}$.
Let $F\colon \partial S \to \partial S$ be a quasi-symmetric homeomorphism. We need to prove that $F(\omega)=\omega$. Let us assume that this is not the case.

Since $N$ is assumed to have reducible first stratum $V_1$, then there is a nontrivial subspace $W$ of $V_1$ that is fixed by (the differential of) every strata-preserving automorphism of $N$. Consequently, the nontrivial group $G$ generated by $\exp(W)$ is preserved by every strata-preserving automorphism of $N$.

The cosets of $G$ induce a singular foliation on $\partial S$. 
In particular, when we restrict to the sets 
$U_1:= N\setminus F^{-1}(\omega)$ and $U_2:=N\setminus F(\omega)$,
we have that   the leaves on $U_1$ (resp. on $U_2$) are exactly the left cosets $xG$, as $x\in N$, except for a leave, which is 
$F^{-1}(\omega)G \setminus {F^{-1}(\omega)}$ (resp. $F(\omega)G \setminus {F(\omega)}$). At this point we stress that in $\partial S$ while $F^{-1}(\omega)$ and $  F(\omega)$ are in the closure of just one of these leaves, the point $\omega$ is in the closure of every leave.

Let us restrict to the map 
$\hat F:=F|_{U_1} :U_1 \to U_2$.
Since $U_1$ and $U_2$ are open set of the Carnot group $N$, 
by Pansu’s differentiability theorem \cite{PansuCCqi}, 
 the map $\hat F$ is Pansu differentiable at almost every point and its Pansu differential 
  is a strata-preserving automorphism, which therefore preserves the proper subgroup $G$.
By the argument in   \cite[Proposition 3.4]{XieReducible} we have that $\hat F$ preserves 
the leaves of the foliation that we are considering.
Since we had a topological characterization of the point $\omega$ (and since the map $F$ is continuous), then we get to a contradiction unless $F$ fixes $\omega$.
\end{proof}

\subsection{Restatement of Le Donne-Xie's theorem}

\begin{theorem}[After Le Donne-Xie]
\label{ref-LDX}
Let $N$ be a Carnot group with reducible first stratum.
Let $S=N \rtimes \mathbb R$ be the Carnot-type Heintze group associated to $N$, and equip $S$ with {any left-invariant Riemannian distance}.
Then, every self-quasiisometry of $S$ is a rough isometry.
\end{theorem}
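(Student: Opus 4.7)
The plan is to combine Proposition~\ref{prop:pointed-sphere-red-first-stratum}, the original theorem of \cite{LeDonneXie} on quasisymmetric rigidity at infinity, the boundary-to-bulk principle of \cite{SX}, and Theorem~\ref{main-heintze}.

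Let $\phi$ be a self-quasiisometry of $(S,d)$. Since $S$ is Gromov-hyperbolic, $\phi$ extends to a quasisymmetric self-homeomorphism of the Gromov boundary $\partial S = N \cup \{\omega\}$, where $\omega$ denotes the focal point. By Proposition~\ref{prop:pointed-sphere-red-first-stratum}, since $N$ has reducible first stratum, this boundary map fixes $\omega$, and hence restricts to a quasisymmetric self-homeomorphism of $N$ equipped with a parabolic visual metric belonging to the conformal gauge at $\omega$.

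Next, I would invoke the original contribution of \cite{LeDonneXie} to conclude that the restricted boundary map is in fact bilipschitz with respect to a Carnot-Carath\'eodory metric on $N$ that realizes the conformal gauge. The boundary-to-bulk principle of \cite{SX} then produces, from such a bilipschitz boundary map, a specific left-invariant Riemannian distance $d_0$ on $S$ such that $\phi$ is a rough isometry of $(S,d_0)$. Finally, by Theorem~\ref{main-heintze}, $\mathrm{Id}\colon (S,d_0) \to (S,d)$ is a rough similarity with some scaling factor $\lambda>0$; a direct computation applying the rough similarity estimate to both pairs $(x,y)$ and $(\phi(x),\phi(y))$ and combining with the rough isometry estimate for $\phi$ with respect to $d_0$ shows that the factor $\lambda$ cancels and $\phi$ is a rough isometry of $(S,d)$.

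The main technical obstacle sits in the boundary rigidity step, namely upgrading quasisymmetric to bilipschitz on $N$; this is the heart of \cite{LeDonneXie}. What the present paper adds to that theorem is twofold: to isolate the focal-point fixing via Proposition~\ref{prop:pointed-sphere-red-first-stratum}, and to use Theorem~\ref{main-heintze} as a uniformization device, upgrading the conclusion from the specific adapted Riemannian distance $d_0$ to an arbitrary left-invariant Riemannian one.
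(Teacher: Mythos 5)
Your proposal follows essentially the same route as the paper's proof: fix the focal point via Proposition~\ref{prop:pointed-sphere-red-first-stratum}, upgrade the induced boundary map to bilipschitz by \cite{LeDonneXie}, pass from bilipschitz boundary map to rough isometry of a suitable metric $d_0$ via \cite{SX} (or Bonk--Schramm), and transfer to the arbitrary left-invariant metric $d$ by Theorem~\ref{main-heintze}, with the similarity factor cancelling exactly as you describe. The only cosmetic difference is that the paper fixes the adapted metric $d_0$ at the outset and regards $\phi$ as a quasiisometry of $(S,d_0)$ from the start, whereas you introduce $d_0$ only at the boundary-to-bulk step; the argument is the same.
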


\begin{proof}
 Let $g_0$ be  a  left-invariant Riemannian metric on $S=N\rtimes \mathbb R$ such that the $N$ direction is perpendicular to $\mathbb R$, let $d_0$ be the associated 
        distance.   
   Then the  ideal boundary $\partial S$ can be identified with $N\cup \{\omega\}$, and  the Carnot metric on $N$  is a parabolic visual metric with respect to  $\omega$.   
   Now  
     let $g$ be  an arbitrary  left-invariant Riemannian metric on $S$ with $d$ the associated distance,  and 
   let  $\phi$ be a self quasiisometry of $(S, d)$.  Then $\phi$ is also a self quasiisometry of $(S, d_0)$. 
By Proposition~\ref{prop:pointed-sphere-red-first-stratum}, $\partial \phi: \partial S\rightarrow  \partial S$ fixes the focal point of $S$  and so induces a  self quasisymmetric map $\partial^\ast \phi$
    of $N$ (with the Carnot metric).  
    Then $\partial^\ast \phi$ is a biLipschitz homeomorphism of $N$ \cite[Theorem 1.2]{LeDonneXie}. However, a self quasiisometry  of  a  Gromov-hyperbolic space is a rough isometry if and only if the induced boundary  map is biLipschitz.     This follows from the results of Bonk-Schramm   \cite{BonkSchrammEmbeddingsCorrectedInSchrammsWork} (Theorems 7.4 and 8.2). For a direct proof in the case of parabolic visual metric see \cite[Lemma~5.1]{SX}.
     Hence $ \phi: (S, d_0)\rightarrow (S, d_0)$ is a rough isometry.  By  Theorem~\ref{main-heintze}   the identity map $\text{Id}: (S, d_0)\rightarrow (S, d)$ is a rough isometry. 
     It follows that $\phi: (S, d)\rightarrow (S, d)$ is also a rough isometry.  
\end{proof}

{\color{black}
\begin{remark}\label{rem:C-is-reducible}
Pansu defined the Carnot-type groups of class (C) in \cite[14.1]{PansuCCqi}.
At the Lie algebra level, the definition reads as follow: the Carnot-type group  $N\rtimes_D \mathbb R$ (where $N$ is different from $\mathbb R$) is of Pansu's class (C) if the centralizer of $\mathbb RD$ in the Lie algebra of derivations of $\mathfrak n$ is equal to $\mathbb RD$ itself.
Since the centralizer of $\mathbb RD$ Lie generates the strata-preserving automorphisms of $\mathfrak n$, and since the first stratum $V_1$ of a nilpotent Lie algebra has dimension greater than or equal $2$, the first stratum of a Lie algebra of class (C) has a vector $w_1$ generating a proper subspace that is invariant under the strata-preserving automorphisms (in fact, any nonzero vector in the first stratum is good for this).
It follows that the groups in Pansu's class (C) have reducible first stratum.
Pansu proved that among Carnot groups $N$ of class $2$ with $\dim V_1$ even, greater or equal to $10$ and $3 \leqslant \dim N - \dim V_1 \leqslant 2 \dim V_1 - 3$, the property of being of class (C) is {\em generic} in the sense of algebraic geometry \cite{PansuCCqi}. This implies that having reducible first stratum is also a generic property among these groups.
As mentionned in the Introduction, Theorem \ref{ref-LDX} for the groups in Pansu's class (C) is due to Pansu.
\end{remark}
}

\subsection{Restatement of Eskin-Fisher-Whyte's and Ferragut's  theorems}


\begin{theorem}[After Eskin-Fisher-Whyte]
\label{th:ref-EFW}
Let $G$ be the Lie group SOL.
Equip $G$ with {any left-invariant Riemannian distance}.
Then, every self-quasiisometry of $G$ is a rough isometry.
\end{theorem}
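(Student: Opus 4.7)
The plan is to reduce the statement to the case of a single, canonical left-invariant Riemannian distance on $G = \mathrm{SOL}$, then invoke the rigidity theorem of Eskin-Fisher-Whyte and transfer the conclusion via Theorem~\ref{main-soltype}. I will first fix a reference left-invariant Riemannian distance $d_0$ on $G$; for instance, take the one associated to the standard decomposition $\mathfrak g = \mathfrak n_1 \oplus \mathfrak n_2 \oplus \mathbb R$ with $\mathfrak n_1 = \mathfrak n_2 = \mathbb R$ and the three summands pairwise orthogonal, normalized so that vertical geodesics have unit speed. The deep input is the theorem of \cite{EFW2}, by which every self-quasiisometry of $(G,d_0)$ lies at bounded distance from an isometry of $(G,d_0)$, and is in particular a rough isometry with respect to $d_0$.

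Given an arbitrary left-invariant Riemannian distance $d$ on $G$ and a self-quasiisometry $\phi$ of $(G,d)$, I will observe that the identity map $(G,d) \to (G,d_0)$ is bilipschitz, since both $d$ and $d_0$ are left-invariant Riemannian metrics on the same connected Lie group (comparison on a compact neighborhood of the identity transfers by left-invariance to all of $G$). Consequently $\phi$ is also a self-quasiisometry of $(G,d_0)$, and by Eskin-Fisher-Whyte there is a constant $C_1$ such that
\[
|d_0(\phi(x), \phi(y)) - d_0(x, y)| \leq C_1 \qquad \text{for all } x,y \in G.
\]

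To transfer this bound back to $d$, I will invoke Theorem~\ref{main-soltype}, which provides constants $\lambda > 0$ and $C_2 \geq 0$ with $|d(x,y) - \lambda d_0(x,y)| \leq C_2$ for all $x,y \in G$. Applying this comparison to both $(x,y)$ and $(\phi(x), \phi(y))$ and combining with the previous inequality via the triangle inequality yields
\[
|d(\phi(x), \phi(y)) - d(x, y)| \leq \lambda C_1 + 2 C_2,
\]
so $\phi$ is a rough isometry of $(G, d)$, as required. The argument is almost entirely formal once Theorem~\ref{main-soltype} is in hand; the only nontrivial ingredient is the quasiisometric rigidity result of \cite{EFW2}, and the role of the reformulation is precisely to render the rough isometry conclusion independent of the particular left-invariant Riemannian metric to which the Eskin-Fisher-Whyte theorem is directly applied. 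There is therefore no genuine obstacle beyond correctly combining the two ingredients, and in particular no need to worry about the value of the rough similarity constant $\lambda$, as it cancels from the final estimate up to bounded additive error.
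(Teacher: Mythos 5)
Your overall architecture (fix a reference metric $d_0$, prove the rough isometry statement there, then transfer to an arbitrary left-invariant metric via Theorem~\ref{main-soltype} and a triangle-inequality computation) matches the paper's, and the transfer step is correct as you wrote it. The problem is the core input: \cite{EFW2} does \emph{not} assert that every self-quasiisometry of $(\mathrm{SOL},d_0)$ lies at bounded distance from an isometry. What Eskin--Fisher--Whyte prove is that, possibly after composing with an isometry (the flip exchanging the two horocyclic directions), every self-quasiisometry is at bounded distance from a height-respecting product map $(n_1,n_2,t)\mapsto(\Psi_1(n_1),\Psi_2(n_2),t)$ with $\Psi_1,\Psi_2$ bilipschitz. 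Such a map is in general \emph{not} at bounded distance from any isometry: isometries of $\mathrm{SOL}$ induce affine maps on the two boundary lines, whereas $\Psi_i$ can be an arbitrary bilipschitz map of $\mathbb R$, and the paper itself stresses that rough isometries not at finite distance from isometries do occur for these groups. So the statement you call the ``deep input'' is strictly stronger than the known theorem and is in fact false; as written, the proof has no valid source for the bound $|d_0(\phi(x),\phi(y))-d_0(x,y)|\le C_1$.

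What is missing is precisely the bridge from ``product map with bilipschitz factors, preserving height'' to ``rough isometry of $(G,d_0)$'', which is the substantive part of the paper's proof. There it is obtained in two steps: first, by \cite{SX} (or Bonk--Schramm), a bilipschitz map of the parabolic visual boundary upgrades each factor map $(\Psi_i,\operatorname{Id}_{\mathbb R})$ to a rough isometry of the corresponding hyperbolic-plane factor $S_i=\mathbb R\rtimes\mathbb R$; second, the distance formula of Theorem~\ref{soldistancetheorem}, which computes $d_0(p,q)$ up to a bounded additive error as $d^{(1)}(\pi_1 p,\pi_1 q)+d^{(2)}(\pi_2 p,\pi_2 q)-|h(p)-h(q)|$, shows that a map preserving each of these three quantities up to additive constants is a rough isometry of $(G,d_0)$. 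With that ingredient restored, your final transfer via Theorem~\ref{main-soltype} goes through unchanged.
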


\begin{theorem}[After Ferragut]
\label{th:ref-F}
Let $G$ be a non-unimodular Sol-type group.
Equip $G$ with {any left-invariant Riemannian distance}.
Then, every self-quasiisometry of $G$ is a rough isometry.
\end{theorem}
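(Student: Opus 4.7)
The plan is to follow the same template that proves Theorem~\ref{ref-LDX} and Theorem~\ref{th:ref-EFW}: first handle one distinguished left-invariant Riemannian metric using Ferragut's main theorem, then transfer the conclusion to an arbitrary left-invariant Riemannian metric via Theorem~\ref{main-soltype}. First I would fix a distinguished left-invariant Riemannian metric $g_0$ on $G = (N_1 \times N_2) \rtimes \mathbb R$ such that the $\mathbb R$-direction is $g_0$-orthogonal to $N_1 \times N_2$ and such that $\mathfrak n_1$ and $\mathfrak n_2$ are $g_0$-orthogonal; let $d_0$ be the associated distance. In this metric, the horospherical product structure of $G$ is most transparent: the two subgroups $S_1 = N_1 \rtimes \mathbb R$ and $S_2 = N_2 \rtimes \mathbb R$ are totally geodesic, the vertical flowlines are minimizing geodesics, and the setting matches that of \cite{FerragutThesis}.

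Next I would invoke the main theorem of \cite{FerragutThesis} on quasiisometries of non-unimodular horospherical products of Busemann spaces. Under the non-unimodularity assumption, every self-quasiisometry $\phi$ of $(G, d_0)$ preserves the height function up to an additive constant and induces, on each of the two boundary components $N_1$ and $N_2$ equipped with their respective parabolic visual metrics, a biLipschitz homeomorphism. The rough-isometry conclusion for $(G,d_0)$ then follows from the horospherical-product analogue of the boundary criterion used in the proof of Theorem~\ref{ref-LDX}, namely the principle that for appropriate visual metrics a self-quasiisometry is a rough isometry if and only if its boundary extensions are biLipschitz, in the spirit of \cite[Lemma 5.1]{SX}.

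For the transfer step, let $d$ be an arbitrary left-invariant Riemannian distance on $G$ and let $\phi$ be a self-quasiisometry of $(G,d)$. Since $d$ and $d_0$ are biLipschitz-equivalent through the identity, $\phi$ is also a self-quasiisometry of $(G,d_0)$, hence a rough isometry of $d_0$ by the previous step. By Theorem~\ref{main-soltype}, the identity $\text{Id}:(G,d_0)\to(G,d)$ is a rough similarity; after rescaling $g_0$ by the appropriate positive constant we may assume it is a rough isometry. Composing $\phi$ with the two identity rough isometries $(G,d)\to(G,d_0)$ and $(G,d_0)\to(G,d)$ yields that $\phi$ is a rough isometry of $(G,d)$ as desired.

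The main obstacle will be extracting the rough-isometry conclusion for $(G, d_0)$ from Ferragut's results, which are likely phrased in terms of boundary extensions or asymptotic structure rather than the metric property directly. Non-unimodularity is essential at this stage, because it is precisely this assumption that prevents any height-reversing symmetry of the asymptotic geometry, forcing the boundary extension of any self-quasiisometry to fix separately the two focal ends of the $\mathbb R$-factor; in the unimodular SOL case treated in \cite{EFW2}, ruling out this reversal required substantial additional work. Once the boundary behaviour is pinned down, however, the passage from biLipschitz boundary extensions to the rough-isometry property is a short routine argument parallel to that in the proof of Theorem~\ref{ref-LDX}.
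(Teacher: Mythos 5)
Your proposal follows essentially the same route as the paper: fix a horospherical-product metric $g_0$ (with $\mathfrak n_1 \perp \mathfrak n_2$ and the $\mathbb R$-direction perpendicular to $N_1\times N_2$), invoke Ferragut's theorem to write any self-quasiisometry, up to bounded distance, as a product map $(\Psi_1,\Psi_2,\operatorname{Id}_{\mathbb R})$ with $\Psi_i$ bilipschitz for the parabolic metrics, apply the criterion of \cite{SX} factorwise to make each $(\Psi_i,\operatorname{Id}_{\mathbb R})$ a rough isometry of $S_i$, and transfer to an arbitrary left-invariant metric via Theorem~\ref{main-soltype}. The one step you leave vague --- the ``horospherical-product analogue of the boundary criterion'' --- is exactly what Theorem~\ref{soldistancetheorem} supplies: since $d_0(p,q)$ agrees up to an additive constant with $d^{(1)}(\pi_1(p),\pi_1(q))+d^{(2)}(\pi_2(p),\pi_2(q))-|h(p)-h(q)|$, factorwise rough isometry immediately yields that the product map, hence the original quasiisometry, is a rough isometry of $(G,d_0)$.
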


\begin{proof}[Proof of Theorems~\ref{th:ref-EFW} and~\ref{th:ref-F}]
Start assuming that $G$ is a non-unimodular Sol-type group.
Equip $G$ with a horospherical product Riemannian metric $g_{0}$, that is, a metric for which $\mathfrak{n}_1 \perp \mathfrak{n}_2$.
Decompose $G = N_1 \times N_2 \times \mathbb R$ where the direction of the $\mathbb R$ factor is $g_0$-perpendicular to $N_1 \times N_2$. 
By \cite[Theorem 10.3.2]{FerragutThesis}, every quasiisometry $\Phi$ of $G$ is a bounded distance away from $(\Psi_1, \Psi_2, \operatorname{Id}_{\mathbb R})$, where $\Psi_i$ is bilipschitz with respect to the $D_i$-parabolic metric on $N_i$.
Using again \cite{SX} $(\Psi_1, \operatorname{Id}_{\mathbb  R})$ is a rough isometry of $S_1$ while $(\Psi_2, \operatorname{Id}_{\mathbb R})$ is a rough isometry of $S_2$. And then we conclude by Theorem~\ref{soldistancetheorem} that $\Phi$ is a rough isometry of $(G,g_0)$.
Then by Theorem~\ref{main-soltype}, $\Phi$ is a rough isometry of $G$ with respect to any left-invariant Riemannian distance.
Now, let $G$ be the three-dimensional Lie group SOL equipped with its standard metric written in coordinates $(n_1, n_2, t)$ as
\[ ds^2 = e^{-2t} dn_1^2 + e^{2t} dn_{2}^2 + dt^2, \]
and $\Phi$ is a quasiisometry of $G$, it follows from \cite{EFW2} that up to possibly composing $\Phi$ with an isometry, $\Phi$ is at bounded distance from a product map of the form above. The end of the argument is the same as before.
\end{proof}

\subsection{Restatement of Carrasco Piaggio's theorem}

\begin{theorem}[After Carrasco Piaggio]
Let $S$ be a Heintze group.
Assume that the real shadow of $S$ is not of Carnot type.
Equip $S$ with a left-invariant Riemannian metric.
Then, every self quasiisometry of $S$ is a rough isometry.
\end{theorem}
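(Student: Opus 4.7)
The plan is to mimic the argument used in the proof of Theorem~\ref{ref-LDX}: reduce the question to a statement about the boundary extension of the self quasi-isometry, apply the appropriate rigidity result at infinity (this time due to Carrasco Piaggio), and transfer the conclusion back to the group using a Bonk--Schramm type principle together with Theorem~\ref{main-heintze}.

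More concretely, given a self quasi-isometry $\phi$ of $(S,d)$, I would first fix a reference left-invariant Riemannian metric $g_0$ on $S$ for which the one-parameter complement to $N = [S,S]$ is perpendicular to $\mathfrak{n}$, and let $d_0$ denote its associated distance. By Theorem~\ref{main-heintze}, the identity map $(S,d) \to (S,d_0)$ is a rough similarity, so $\phi$ is also a self quasi-isometry of $(S,d_0)$. Identifying $\partial S$ with $N \cup \{\omega\}$, where $\omega$ is the focal point, the boundary extension $\partial \phi$ is a quasi-symmetric self-map of $\partial S$ with respect to the parabolic visual metric determined by $d_0$.

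The key step is to invoke Carrasco Piaggio's results \cite{CarrascoOrliczHeintze} together with the pointed sphere conjecture in this setting (known by Xie--Shanmugalingam \cite{SX} and Xie \cite{XieLargeScale}) to conclude that $\partial \phi$ fixes $\omega$ and that its restriction to $N$ is bilipschitz with respect to the parabolic visual metric. Once this is in hand, the theorems of Bonk--Schramm (Theorems~7.4 and~8.2 of \cite{BonkSchrammEmbeddingsCorrectedInSchrammsWork}), or \cite[Lemma~5.1]{SX}, imply that $\phi$ is itself a rough isometry of $(S,d_0)$. A final application of Theorem~\ref{main-heintze} then promotes this to a rough isometry of $(S,d)$.

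The main obstacle is that Carrasco Piaggio originally formulates his theorem for purely real Heintze groups, whereas the statement we want covers all Heintze groups with non-Carnot real shadow. The passage should rely on the fact that the parabolic visual gauge on $\partial S = N \cup \{\omega\}$ depends only on the real shadow $\mathfrak{s}_0$: the imaginary semisimple component $D_{\mathsf{ss,i}}$ of the derivation $D$ acts on $N$ by automorphisms that are isometries in any left-invariant visual metric, so that quasi-symmetries and their bilipschitz subclass of $\partial S$ and $\partial S_0$ coincide. This should allow one to transport the bilipschitz rigidity statement at the boundary from $S_0$ to $S$ and complete the proof.
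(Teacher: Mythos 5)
Your overall shape (reduce to the purely real case, apply Carrasco Piaggio, come back via Theorem~\ref{main-heintze}) is the right one, but the step you yourself flag as the main obstacle is exactly where the gap sits, and your proposed fix does not close it. You claim that the parabolic visual gauge on $\partial S$ depends only on the real shadow because the one-parameter group generated by $D_{\mathsf{ss,i}}$ acts on $N$ ``by automorphisms that are isometries in any left-invariant visual metric.'' That is not true as stated: this group acts by isometries only of suitably chosen (invariant) metrics, not of an arbitrary visual metric, and in any case you give no argument that quasisymmetric maps, their bilipschitz subclass, and the focal point are the same for $\partial S$ and $\partial S_0$. The clean statement that makes the reduction work, and the one the paper uses, is that $S$ admits a left-invariant Riemannian metric $g_0$ which is \emph{isometric}, via some $\rho\colon S\to S_0$, to a left-invariant metric on the real shadow $S_0$ (cited to \cite{AlekHMN}). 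With that in hand one never needs the boundary at all: given a self quasiisometry $\phi$ of $S$, the conjugate $\rho\phi\rho^{-1}$ is a self quasiisometry of the purely real, non-Carnot group $S_0$, hence a rough isometry by the group-level statement of \cite[Corollary 1.8]{CarrascoOrliczHeintze}; conjugating back by the isometry $\rho$ makes $\phi$ a rough isometry of $(S,g_0)$, and Theorem~\ref{main-heintze} upgrades this to any left-invariant metric.

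Two further points on your boundary detour. First, the attribution is off: the pointed-sphere and bilipschitz statements for general purely real non-Carnot Heintze groups are due to Carrasco Piaggio himself; \cite{SX} and \cite{XieLargeScale} only treat special subfamilies that his result subsumes. Second, your route needs a boundary-level assertion (quasisymmetric self-maps of $\partial S_0$ fix $\omega$ and are bilipschitz on $N$ in a parabolic visual metric) in order to feed Bonk--Schramm or \cite[Lemma~5.1]{SX}, whereas the published form of Carrasco Piaggio's corollary is already phrased at the level of the group; so even granting the real-shadow transfer, you would be re-deriving through the boundary something that is available directly, while relying on a boundary formulation you have not verified is in the literature. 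The boundary mechanism you copied from Theorem~\ref{ref-LDX} is the right tool there (where the input genuinely lives on the boundary), but here the shorter isometry-conjugation argument is both what the paper does and what your missing lemma actually amounts to.
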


\begin{proof}
Let $g_0$ be the left-invariant Riemannian metric on $S$ which is simultaneoulsy isometric to a left-invariant metric $\tilde g_0$ on the real shadow $S_0$ \cite{AlekHMN}; denote by $\rho \colon S \to S_0$ any such isometry.
By the published version of \cite[Corollary 1.8]{CarrascoOrliczHeintze}, every self quasiisometry of $S_0$ is a rough isometry. Let $\phi$ be a self quasiisometry of $S$. Then $\rho \phi \rho^{-1}$ is a self quasiisometry of $S_0$, hence a rough isometry of $S_0$ with respect to $\tilde g_0$. It follows that $\phi$ is a rough isometry of $g_0$, and then of any left-invariant metric by Theorem~\ref{main-heintze}.
\end{proof}

\subsection{Restatement of Kleiner-M\"uller-Xie's theorems}

\begin{theorem}[After Kleiner, M\"uller and Xie]
Let $S$ be a Heintze group whose real shadow is of Carnot type. 
Assume that the nilradical $N=[S,S]$ is nonrigid in the sense of Ottazzi-Warhurst, and that $N$ is not $\mathbb R^d$ or a Heisenberg group.
Equip $S$ with a left-invariant Riemannian metric.
Then, every self quasiisometry of $S$ is a rough isometry.
\end{theorem}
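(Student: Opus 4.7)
The plan is to follow the same template as for Theorem~\ref{ref-LDX}. First, using \cite{AlekHMN}, I would fix a left-invariant Riemannian metric $g_0$ on $S$ with associated distance $d_0$ such that $(S,g_0)$ is isometric (via some explicit map $\rho$) to the real shadow $S_0$ endowed with a left-invariant Riemannian metric. Since $S_0$ is of Carnot type, the Gromov boundary identifies as $\partial S = N \cup \{\omega\}$, where $\omega$ is the focal point, and the conformal gauge on $N$ contains a Carnot--Carath\'eodory metric $d_{\mathsf{CC}}$ which is a parabolic visual metric based at $\omega$. Given any left-invariant Riemannian distance $d$ on $S$ and any self-quasiisometry $\phi$ of $(S,d)$, the identity $(S,d)\to(S,d_0)$ is bilipschitz, so $\phi$ is also a self-quasiisometry of $(S,d_0)$.

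Next, I would verify that the induced boundary homeomorphism $\partial\phi$ fixes the focal point $\omega$, and hence restricts to a quasisymmetric self-homeomorphism $\partial^\ast\phi$ of $(N,d_{\mathsf{CC}})$. This is the pointed-sphere property in our setting, and it is expected to follow directly from the analysis carried out in \cite{KMX21}; alternatively one can mimic the foliation argument of Proposition~\ref{prop:pointed-sphere-red-first-stratum}, provided that the Ottazzi--Warhurst nonrigidity of $N$ supplies a canonical strata-preserving-invariant subspace of the first stratum analogous to the one used there.

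The core step would then be to invoke the main bilipschitz rigidity theorem of \cite{KMX21}: under the hypotheses that $N$ is nonrigid in the sense of Ottazzi--Warhurst and is neither abelian nor Heisenberg, every quasisymmetric self-homeomorphism of $(N,d_{\mathsf{CC}})$ is bilipschitz. Once $\partial^\ast\phi$ is known to be bilipschitz, the general principle recorded from \cite[Theorems 7.4 and 8.2]{BonkSchrammEmbeddingsCorrectedInSchrammsWork} (or \cite[Lemma~5.1]{SX} for the parabolic visual case) implies that $\phi$ is a rough isometry of $(S,d_0)$. Finally, Theorem~\ref{main-heintze} gives that $\mathrm{Id}\colon(S,d_0)\to(S,d)$ is a rough similarity, and composing with $\phi$ preserves the rough-isometry property, so $\phi$ is a rough isometry of $(S,d)$ as desired.

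The main obstacle I anticipate is extracting from \cite{KMX21} the exact two ingredients above---namely, the pointed-sphere property for this family of Heintze groups and the bilipschitz rigidity of quasisymmetric self-maps of the boundary Carnot group---in a form uniform enough for our reformulation; everything else is essentially an assembly of Theorem~\ref{main-heintze} with standard boundary-interior rigidity dictionary for Gromov-hyperbolic spaces.
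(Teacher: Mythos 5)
Your plan is essentially the paper's proof: the paper likewise cites \cite{KMX21} for the two key inputs (the pointed sphere property, which is \cite[Theorem 1.2]{KMX21}, and bilipschitz rigidity of quasisymmetric self-maps of the boundary, \cite[Theorem 3.1]{KMX21}) and then runs exactly the same mechanism as in Theorem~\ref{ref-LDX}, assembling the boundary--interior dictionary of Bonk--Schramm/\cite{SX} with Theorem~\ref{main-heintze}. Your extra care about first passing to the real shadow via \cite{AlekHMN} is a reasonable refinement of the same route, not a different argument.
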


\begin{proof}
The pointed sphere conjecture holds for these groups by \cite[Theorem 1.2]{KMX21} and global quasisymmetric homeomorphisms of the boundary minus the focal point are bilipschitz by \cite[Theorem 3.1]{KMX21}.
The mechanism of proof is then exactly the same as for Theorem~\ref{ref-LDX}.
\end{proof}

{\color{black}
\begin{theorem}[After Kleiner, M\"uller and Xie]
Let $S$ be a Heintze group whose real shadow is of Carnot type. 
Assume that the nilradical $N=[S,S]$ is the group of unipotent triangular real $n \times n$ matrices, $n\geqslant 4$.
Equip $S$ with a left-invariant Riemannian metric.
Then, every self quasiisometry of $S$ is a rough isometry.
\end{theorem}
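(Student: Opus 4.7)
The plan is to mirror exactly the strategy used in the proof of the previous theorem (the Kleiner--M\"uller--Xie case for nonrigid nilradicals), which itself runs parallel to the proof of Theorem~\ref{ref-LDX}. The two external inputs required, both to be extracted from \cite{LDX22f}, are: (i) a special case of the pointed sphere conjecture, namely that every quasi-symmetric self-homeomorphism of $\partial S$ fixes the focal point $\omega$, when $N$ is the group of unipotent upper triangular real $n\times n$ matrices with $n\geqslant 4$; and (ii) the bilipschitz rigidity of global quasisymmetric self-homeomorphisms of $N\subset \partial S$, equipped with a parabolic visual metric coming from a Carnot--Carath\'eodory distance.

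Granting (i) and (ii), the rest of the argument is mechanical. I would first select a distinguished left-invariant Riemannian metric $g_0$ on $S$, chosen so that the $\mathbb{R}$-direction is perpendicular to $N$, with associated distance $d_0$. With this choice, $\partial S$ identifies canonically with $N \cup \{\omega\}$ and the induced parabolic visual metric on $N$ is bilipschitz equivalent to a Carnot--Carath\'eodory distance. Let $\phi$ be a self quasiisometry of $(S,d)$ for an arbitrary left-invariant Riemannian distance $d$; by Theorem~\ref{main-heintze} the identity $(S,d)\to (S,d_0)$ is a biLipschitz (in fact roughly similar) homeomorphism, so $\phi$ is also a self quasiisometry of $(S,d_0)$.

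The boundary extension $\partial\phi$ is then a quasi-symmetric self-homeomorphism of $\partial S$, which by (i) fixes $\omega$, and therefore restricts to a global quasisymmetric self-homeomorphism of $N$. By (ii) this restriction is bilipschitz. Invoking the Bonk--Schramm criterion (\cite[Theorems 7.4 and 8.2]{BonkSchrammEmbeddingsCorrectedInSchrammsWork}, or in the parabolic visual setting \cite[Lemma~5.1]{SX}) converts this bilipschitz property of the boundary extension into the statement that $\phi$ is a rough isometry of $(S,d_0)$. A final application of Theorem~\ref{main-heintze} transports this rough isometry property from $d_0$ to the originally chosen distance $d$, finishing the proof.

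The main obstacle is, as in the previous case, establishing (i) and (ii): neither the reducibility argument of Theorem~\ref{ref-LDX} nor the Ottazzi--Warhurst based argument of the nonrigid case applies verbatim to the full unipotent upper triangular group, because its stratification and its group of Carnot automorphisms have to be analyzed in their own right. The precise Lie-theoretic structure of $N$ (its lower central series, the explicit description of its strata-preserving automorphisms, and the corresponding constraints on quasiconformal maps of $\partial S \setminus \{\omega\}$) is exactly the content imported from \cite{LDX22f}; once granted, the reformulation presented above is immediate.
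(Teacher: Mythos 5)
Your proposal is correct and follows essentially the same route as the paper: fix the focal point, restrict to a global quasisymmetric self-map of $N$, apply the bilipschitz rigidity from \cite{LDX22f}, convert via Bonk--Schramm/\cite{SX} into a rough isometry for the distinguished metric, and transfer to an arbitrary left-invariant Riemannian metric with Theorem~\ref{main-heintze}. The only nuance is that the paper does not quote your input (i) directly from \cite{LDX22f}: it deduces the pointed-sphere property from \cite[Corollary 3.2]{LDX22f} (local quasiconformal homeomorphisms preserve a coset foliation, possibly after composing with an automorphism $\tau$ of $N$) by rerunning the foliation argument of \S\ref{subsec:spec-case-ps}, while your input (ii) is exactly \cite[Theorem 1.3]{LDX22f}.
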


\begin{proof}
Global quasisymmetric homeomorphisms of the boundary minus the focal point are bilipschitz by \cite[Theorem 1.3]{LDX22f}.
By \cite[Corollary 3.2]{LDX22f}, there is an automorphsim $\tau$ of $N$, such that possibly after composing with $\tau$, any local quasiconformal homeomorphism of the boundary locally preserve a coset foliation. The pointed sphere conjecture for $S$ can be deduced in the same way as we did in \S \ref{subsec:spec-case-ps}.
\end{proof}

}

\section{Limitations of the present work and questions left open}
\label{sec:lamplighter}

\subsection{Failure of the analogous property for Lamplighter groups}

The groups $L_m = \mathbb Z /m \mathbb Z \wr \mathbb Z$ for $m \geqslant 2$ share their asymptotic cones (namely, horospherical products of two $\mathbb R$-trees equipped with a preferred horofunction, see \cite[Section 9]{CornulierDimCone}) with that of the group SOL, and their large-scale geometries are in many respect comparable. 
However we will prove below that they fail to have their word metrics roughly similar through the identity map.

Consider the following infinite presentation of the group $L_m$:
\[ \langle a, t \mid a^m, [t^i at^{-i}, t^j a t^{-j}], i, j \in \mathbb Z \rangle\]
and the two finite generating sets
\begin{itemize}
\item 
The wreath product generating set $S_w = \{a,t \}$
\item
The automaton generating set $S_a = \{t, ta \}$.
\end{itemize}
We denote by $d_w$ and $d_a$ the word distances with respect to $S_w \cup S_w^{-1}$ and $S_a \cup S_a^{-1}$ respectively.
In the following, by ``color'' we mean an element of $\mathbb Z/m\mathbb Z$.
An element of $L_m$ is encoded by a lighting function $\mathbb Z \to \mathbb Z/m\mathbb Z$ together with the position of a cursor, with the following multiplication law:
Multiplying by $t$ on the right amounts to moving the cursor to the right, and
multiplying by $a$ on the right amounts to shifting color at the position of the cursor.
Let $n$ be a positive integer (think of it large enough).
Let us first consider the element $g \in L_m$ for which the cursor is located at $0 \in \mathbb Z$ and the bulb at position $n$ is lit with the color $1 \in \mathbb Z / m \mathbb Z$  (all the others being not lit). Note that $g = t^n a t^{-n}$. 
Since in both generating sets, the cursor moves at most by one unit at each multiplication by a generator, the distance from $1$ to $g$ in both word metrics must be at least $2n$. In fact, one computes that
\[ d_w (1, t^n a t^{-n}) = 2n+1 \]
while {
\[ t^n a t^{-n} = (ta)^n a (ta)^{-n} = (ta)^n  t^{-1}(ta) (ta)^{-n} = (ta)^n  t^{-1} (ta)^{-(n-1)} \]
hence
$d_a(1,t^n at^{-n}) = 2n$.}
It follows that if $d_a$ and $d_w$ were to be roughly similar through the identity, they should differ by a constant.
However, 
$d_w(1, (ta)^n) = 2n$
while $d_a(1,(ta)^n) = n$
as may be proved by counting the occurrences of $t$ and the number of bulbs lit in the final configuration.

\begin{remark}
The wreath product metric is easier to undersand, and there are explicit formulae for the word length for families of words in normal form. Taback and Cleary have investigated the geometry of the automata metric and the result above could be deduced from their paper \cite{ClearyTaback}.
\end{remark}

    \subsection{In search of a coarse notion}
    
    One of the main limitations of our present work is that the property that we identify, namely having all the left-invariant Riemannian metrics roughly similar, is not a coarse property.
    Indeed, Riemannian metrics play no special role among proper geodesic metrics as far as large-scale geometry is concerned.
    
    The search for a coarse notion leads to the following considerations.
Let $G$ be a locally compact, compactly generated group.
Denote by $\operatorname{Geom}(G)$ the collection of geometric actions, that is, pairs $(X, \alpha)$ where $X$ is proper geodesic and $\alpha \colon G \to \operatorname{Isom}(X)$ is continuous, proper and cocompact.
For every pair $\lbrace (X, \alpha), (Y, \beta) \rbrace$ in $\operatorname{Geom}(G)$, and for every pair of points $o_X \in X, o_Y \in Y$, the map $G._\alpha o_X \to G._\beta o_Y$ determined by the identity map of $G$ is a quasiisometry $X \to Y$. We call this map (to be considered only up to bounded distance) the $G$-{orbital map}. 
If $H < G$ is closed and co-compact, then $H$ is still compactly generated locally compact \cite[2.C.8(3)]{CornulierHarpeMetLCGroups}, and there is a natural map $\operatorname{Geom}(G) \to \operatorname{Geom}(H)$ obtained by $(X, \alpha) \to (X, \alpha_{\mid H})$. If $K < G$ is a compact normal subgroup, and $\pi : G \to G/K$ is the associated epimorphism, then there is a natural map $\operatorname{Geom}(G/K) \to \operatorname{Geom}(G)$ obtained by $(X, \alpha) \to (X, \alpha \circ \pi)$. We essentially proved the following.

\begin{proposition}\label{prop:coarse-form}
Let $H$ be a compactly generated locally compact group. Assume that for every pair $\lbrace (X, \alpha), (Y, \beta) \rbrace$ in $\operatorname{Geom}(H)$, the orbital map $X \to Y$ is a rough similarity.
Then 
\begin{enumerate}
\item 
If $K$ is a compact normal subgroup of $H$, then for every pair $\lbrace (X, \alpha), (Y, \beta) \rbrace$ in $\operatorname{Geom}(H/K)$, the orbital map $X \to Y$ is a rough similarity.
\item 
If $G$ receives an injective homomorphism with closed and co-compact image from $H$, then for every pair $\lbrace (X, \alpha), (Y, \beta) \rbrace$ in $\operatorname{Geom}(G)$, the orbital map $X \to Y$ is a rough similarity.
\end{enumerate}
\end{proposition}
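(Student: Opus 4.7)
The plan for both parts is to transfer geometric actions between $H$ and the related group, invoke the hypothesis applied to $H$, and then identify the resulting orbital maps with those of the related group up to bounded distance.

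For part (1), given $(X, \alpha), (Y, \beta) \in \operatorname{Geom}(H/K)$, I would first verify that $(X, \alpha \circ \pi), (Y, \beta \circ \pi)$ lie in $\operatorname{Geom}(H)$. Continuity is immediate; properness follows from the identity
\[ \{ h \in H : \alpha(\pi(h)) C \cap C \neq \emptyset \} = \pi^{-1}\bigl( \{ \bar h \in H/K : \alpha(\bar h) C \cap C \neq \emptyset \} \bigr) \]
for each compact $C \subset X$, together with the fact that $\pi$ is a proper map (because $K$ is compact); cocompactness is inherited because $\pi$ is surjective. The hypothesis on $H$ then yields that the $H$-orbital map $X \to Y$ is a rough similarity. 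Since $H$ acts on each space through $\pi$, the $H$-orbit and the $H/K$-orbit through any basepoint coincide as subsets of $X$ (resp. $Y$), and the two orbital maps literally coincide as maps, so the conclusion transfers.

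For part (2), given $(X, \alpha), (Y, \beta) \in \operatorname{Geom}(G)$, I would restrict to obtain $(X, \alpha|_H), (Y, \beta|_H)$. Continuity and properness are preserved by restriction to the closed subgroup $H$. For cocompactness, choose a compact $C_X \subset X$ with $\alpha(G) C_X = X$ and a compact $C_G \subset G$ with $H C_G = G$; then $\alpha(C_G) C_X$ is a compact set whose $H$-translates cover $X$. By the hypothesis, the $H$-orbital map $X \to Y$ is a rough similarity. Fixing basepoints $o_X \in X, o_Y \in Y$, the $G$-orbital map and the $H$-orbital map agree as maps $H o_X \to H o_Y$. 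Because $H o_X$ is cobounded in $X$ (as just shown), and any quasiisometry is coarsely Lipschitz, the two extensions $X \to Y$ must differ by a bounded amount. Since bounded perturbation preserves the rough-similarity property (only the additive constant changes), the $G$-orbital map is also a rough similarity.

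The verifications are all routine; the main care required is in part (2), to ensure that the $H$-orbital map, which is determined intrinsically by a cobounded subset, really represents the same quasiisometry class as the $G$-orbital map, so that the ``rough similarity'' conclusion transfers. No substantial obstacle arises, which is consistent with the fact that this proposition is meant to motivate a coarse reformulation of Theorems~\ref{main-heintze} and~\ref{main-soltype} rather than to establish a new geometric phenomenon.
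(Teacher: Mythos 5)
Your proposal is correct and follows exactly the route the paper intends: the paper only sketches this ("We essentially proved the following"), relying on the natural transfer maps $\operatorname{Geom}(H/K)\to\operatorname{Geom}(H)$ (composing with $\pi$) and $\operatorname{Geom}(G)\to\operatorname{Geom}(H)$ (restriction to the closed cocompact copy of $H$), followed by identifying the corresponding orbital maps up to bounded distance. Your write-up just supplies the routine verifications (properness of $\pi$, cocompactness of the restricted action, coboundedness of the $H$-orbit and coarse Lipschitzness of the $G$-orbital map) that the paper leaves implicit, so there is nothing genuinely different to compare.
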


Note that two-ended groups have the property in the proposition.
{\color{black}
Also, if $\Gamma=H$ is a finitely generated group which sits as a uniform lattice in a locally compact group $G$, the property expressed by Proposition~\ref{prop:coarse-form} transfers from $\Gamma$ to $G$, but not from $G$ to $\Gamma$.}

\subsection{Final questions}
An affirmative answer to the next question would provide a robust generalization of the main results of the present paper.

\begin{question}
Let $G$ be a completely\footnote{A group is completely solvable if it is isomorphic to a closed subgroup of upper triangular real matrices.} solvable Lie group with $H^1(G, \mathbb R)= \mathbb R$. Does it hold that for every pair $\lbrace (X, \alpha), (Y, \beta) \rbrace$ in $\operatorname{Geom}(G)$, the orbital map $X \to Y$ is a rough similarity?
\end{question}

\begin{question}
Same question as above, where $G = (\mathbb Z_m \times \mathbb Z_m) \rtimes \mathbb Z$ is the locally compact group that contains $L_m$ as a lattice.
\end{question}


\begin{question}
\label{ques:dymarz-th}
Same question as above, where $G=(N \times \mathbb Q_m) \rtimes \mathbb Z$, where $N$ is a nilpotent connected Lie group and $\mathbb Z$ acts by multiplication by $m$ on $\mathbb Q_m$ and by a contracting automorphism on $N$.
\end{question}

If the answer to Question~\ref{ques:dymarz-th} is yes, then the main theorem of \cite{DymarzbdriesAmenHyp} would enter the framework of Theorem~\ref{th:reformulation}.

\bibliographystyle{amsalpha}

\bibliographystyle{alpha}

\bibliography{biblio_left_invariant_metrics}

\end{document}